\newcommand{\blue}[1]{{#1}}
\newcommand{\red}[1]{}
\definecolor{dgreen}{rgb}{0.05, 0.6, 0.2}
\newcommand{\dgreen}[1]{{\color{dgreen} #1}}
\newcommand{\rs}[1]{{\mbox{\scriptsize \sc #1}}}
\newcommand{\vc}[1]{{\boldsymbol #1}}
\newcommand{\sr}[1]{{\cal #1}}
\newcommand{\dd}[1]{\mathbb{#1}}
\newcommand{\E}{\mathbb{E}}
\newcommand{\R}{\mathbb{R}}
\newcommand{\Prob}{\mathbb{P}}
\newcommand{\Z}{\mathbb{Z}}
\newcommand{\J}{\mathcal{J}}
\providecommand{\abs}[1]{\lvert#1\rvert}
\newcommand{\eq}[1]{(\ref{eq:#1})}
\newcommand{\lem}[1]{Lemma~\ref{lem:#1}}
\newcommand{\thr}[1]{Theorem~\ref{thr:#1}}
\newcommand{\app}[1]{Section~\ref{app:#1}}
\newcommand{\sectn}[1]{Section~\ref{sect:#1}}
\newcommand{\lemt}[1]{\ref{lem:#1}}
\newcommand{\appt}[1]{\ref{app:#1}}
\newcommand{\br}[1]{\langle #1 \rangle}
\newcommand{\pend}{\hfill \thicklines \framebox(6.6,6.6)[l]{}}
\newenvironment{proof*}[1]{\noindent {\sc  #1} \rm}{\pend}
\newtheorem{theorem}{Theorem}[section]
\newtheorem{lemma}{Lemma}[section]
\newtheorem{proposition}{Proposition}[section]
\newtheorem{remark}{Remark}[section]
\newcommand{\setsection}[2] {
\setcounter{section}{#1}
\setcounter{subsection}{0}
\setcounter{equation}{0}
\setcounter{conjecture}{0}
\setcounter{assumption}{0}
\setcounter{question}{0}
\setcounter{definition}{0}
\setcounter{theorem}{0}
\setcounter{corollary}{0}
\setcounter{lemma}{0}
\setcounter{proposition}{0}
\setcounter{remark}{0}
\setcounter{appen}{0}
\setsection*{\large \bf \thesection. #2}}
\newcommand{\chapternote}[1]{{%
  \let\thempfn\relax
  \footnotetext[0]{#1}
}}
\newcommand{\setnewcounter} {
\setcounter{subsection}{0}
\setcounter{equation}{0}
\setcounter{conjecture}{0}
\setcounter{assumption}{0}
\setcounter{question}{0}
\setcounter{definition}{0}
\setcounter{theorem}{0}
\setcounter{corollary}{0}
\setcounter{lemma}{0}
\setcounter{proposition}{0}
\setcounter{remark}{0}
}
\begin{document}
\begin{frontmatter}

\title{Heavy traffic approximation for the stationary distribution of a generalized Jackson network: the BAR approach}
\runtitle{ Steady-state heavy traffic approximation: BAR approach}


\author{\fnms{Anton} \snm{Braverman}\thanksref{t1,t2,m1}\corref{}\ead[label=e1]{ab2329@cornell.edu}},
\author{\fnms{J.G.} \snm{Dai}\thanksref{t3,m1}\ead[label=e2]{jd694@cornell.edu}}
\and
\author{\fnms{Masakiyo} \snm{Miyazawa}\thanksref{t1,m2}\ead[label=e3]{miyazawa@rs.tus.ac.jp}}
\address{\printead{e1}}
\address{\printead{e2}}
\address{\printead{e3}}
\affiliation{Cornell University\thanksmark{m1} and Tokyo University of Science\thanksmark{m2}}

\runauthor{Braverman, Dai, and Miyazawa}

\begin{abstract}
In the seminal paper of \citet{GamaZeev2006}, the authors justify the steady-state diffusion approximation of a generalized Jackson network (GJN) in heavy traffic. Their approach involves the so-called limit interchange argument, which has since become a popular tool employed by many others who study diffusion approximations. In this paper we illustrate a novel approach by using it to justify the steady-state approximation of a GJN in heavy traffic. Our approach involves working directly with the basic adjoint relationship (BAR), an integral equation that characterizes the stationary distribution of a Markov process. As we will show, the BAR approach is a more natural choice than the limit interchange approach for justifying steady-state approximations, and can potentially be applied to the study of other stochastic processing networks such as multiclass queueing networks. 
\end{abstract}


\begin{keyword}
\kwd{Stochastic processing networks}
\kwd{single class networks}
\kwd{multiclass networks}
\kwd{stationary distributions}
\kwd{heavy traffic approximation}
\kwd{interchange of limits}
\kwd{reflecting Brownian motions}
\kwd{SRBM.}
\end{keyword}

\end{frontmatter}




\begin{abstract}
In the seminal paper of \citet{GamaZeev2006}, the authors justify the steady-state diffusion approximation of a generalized Jackson network (GJN) in heavy traffic. Their approach involves the so-called limit interchange argument, which has since become a popular tool employed by many others who study diffusion approximations. In this paper we illustrate a novel approach by using it to justify the steady-state approximation of a GJN in heavy traffic. Our approach involves working directly with the basic adjoint relationship (BAR), an integral equation that characterizes the stationary distribution of a Markov process. As we will show, the BAR approach is a more natural choice than the limit interchange approach for justifying steady-state approximations, and can potentially be applied to the study of other stochastic processing networks such as multiclass queueing networks. 
\end{abstract}
  

\section{Introduction}
\label{sect:introduction} 
\chapternote{\dgreen{This document serves as an improved version of the recently published paper \cite{BravDaiMiya2017}.
All changes are colored in green, and are documented in the last paragraph of the introduction, just before Section~\ref{sect:notation}.}}
This paper considers open single-class queueing networks that have $d$ service stations. Each station has a single server operating under the first-in-first-out (FIFO) service discipline. Upon completing service at a particular station, customers are either routed to another station, or exit the network. There is a single class of customers at each station, meaning that all customers are homogenous in terms of service times and routing. A customer entering the network will exit in finite time with probability one, hence the term open network. For each station, the external interarrival times (possibly
null), service times, and routing decisions are assumed to follow three separate i.i.d.\
sequences of random variables; the three sequences are assumed to be independent. Furthermore, the interarrival times, service times and routing decisions are assumed to be
independent between different stations. Such a network is hereafter referred to as a generalized
Jackson network (GJN).

 In a seminal paper, Gamarnik and Zeevi~\cite{GamaZeev2006} proved that for a
sequence of GJNs indexed by $n=1, 2, \ldots$,
\begin{equation}
  \label{eq:steadyConver}
  r_n L ^{(n)}(\infty) \Rightarrow Z(\infty) \quad \text{ as } n\to\infty,
\end{equation}
where the symbol $\Rightarrow$ denotes convergence in distribution,
$\{r_n\}$ is a sequence of positive numbers that converge to zero, $L^{(n)}(\infty)$ is a
random vector whose $i$th component represents the steady-state number
of customers at station $i$ in the $n$th network, and $Z(\infty)$ is a
random vector that has the stationary distribution of a certain
$d$-dimensional semimartingale reflecting Brownian motion (SRBM)
$Z=\{Z(t), t\ge 0\}$ that was first defined in \cite{HarrReim1981}.
Readers are referred to the introduction of \cite{GamaZeev2006} for
the importat motivation of this problem, and a review of then recent
literature.
  Gamarnik and Zeevi \cite{GamaZeev2006} proved (\ref{eq:steadyConver}) under
two key conditions: (a) the heavy traffic condition, and (b) the
exponential moment condition. 

Condition (a) is standard and can be
expressed in terms of a $d$-dimensional vector $\rho^{(n)}$, where $\rho^{(n)}_i$
is the traffic intensity at station $i$ in the $n$th network. This
condition requires that $\rho^{(n)}_i<1$ at each station $i$, and
$\rho^{(n)}_i\to 1$ as $n\to\infty$. The scaling parameter $r_n$ in
(\ref{eq:steadyConver}) is closely tied to this heavy traffic condition and describes how quickly each $\rho^{(n)}_i$ converges to one. In particular, $r_n$ goes to zero at the same rate as $1-\rho^{(n)}_i$. Namely, $\lim_{n \to \infty} (1-\rho^{(n)}_i)/r_n > 0$ exists for each station $i$.
 Condition (b) requires that
interarrival and service times have finite exponential moments; such a condition is unnecessarily strong. In a follow up work by Budhiraja
and Lee \cite{BudhLee2009}, this condition is relaxed to a new moment
condition: (b') interarrival and service times have finite second
moments, and the sequences (indexed by $n$) of square interarrival and square service times are uniformly integrable.  Conditions (a) and (b') in \cite{BudhLee2009} represent the
\blue{weakest possible} conditions for (\ref{eq:steadyConver}) to hold.

In this paper, we prove (\ref{eq:steadyConver}) under conditions (a) and (b'); see Theorem~\ref{thr:heavy traffic 1} in Section~\ref{sect:results}. Our proof of (\ref{eq:steadyConver}) uses a novel approach, and is drastically different from the ones in
\cite{BudhLee2009} and \cite{GamaZeev2006}. This approach was used in \cite{Miya2015} to study the steady-state approximation of a single server queue in heavy traffic. However, ours is the first paper to apply it to the \red{\sout{GJN}} \blue{network} setting. In addition to proving Theorem
\ref{thr:heavy traffic 1}, the ideas laid out in this paper can be applied to study steady-state diffusion approximations of other systems of interest. One promising direction for future work is to generalize this approach to study multiclass queuing networks, such as those studied by \citet{BramDai2001}. We now outline the approach.

For $\theta\in \R^d$ with $\theta\le 0$, let $\varphi^{(n)}(\theta)$ be the moment generating function (MGF) of
$Z^{(n)}(\infty) =r_nL^{(n)}(\infty)$, defined in
(\ref{eq:laplaceprelimit}). To prove (\ref{eq:steadyConver}), we show that $\varphi^{(n)}(\theta)$ converges to $\varphi(\theta)$,  the MGF of
$Z(\infty)$. To do so, it suffices to prove that the pointwise limit of any convergent
subsequence $\{\varphi^{(n_k)},\ k \geq 1\}$ must be $\varphi(\theta)$, i.e.
\begin{equation}
  \label{eq:mgfsubconv}
  \lim_{k\to\infty} \varphi^{(n_k)}(\theta) = \varphi(\theta) \text{ for each } \theta\in \R^d \text{ with } \theta\le 0.
\end{equation}
Also associated with $Z(\infty)$ are boundary MGFs $\varphi_j(\theta)$ $(j=1, \ldots, d)$, defined in \eqref{eq:mgfsrbmdef}. By a uniqueness result in \cite{DaiKurt1994}, we know that $\varphi(\theta)$ and its boundary counterparts are characterized by a basic
adjoint relationship (BAR) given in (\ref{eq:srbmbar}). We know that $\varphi^{(n_k)}(\theta)$ also has associated boundary MGFs $\varphi^{(n_k)}_j(\theta)$ $(j=1, \ldots, d)$ that are defined in \eqref{eq:laplaceprelimit}. To prove
(\ref{eq:mgfsubconv}), we show in Proposition~\ref{lem:prelimitMGFBAR} that $\varphi^{(n_k)}(\theta)$ and its boundary counterparts satisfy BAR (\ref{eq:srbmbar}) asymptotically. Namely, the limits $\varphi^*(\theta)=\lim_{k\to\infty}\varphi^{(n_k)}(\theta)$,
along with
$\varphi^*_j(\theta)=\lim_{k\to\infty}\varphi^{(n_k)}_j(\theta)$, 
satisfy BAR (\ref{eq:srbmbar}) exactly. On its own, this result does not yet imply that $\varphi^*(\theta)=\varphi(\theta)$. 

\blue{Apriori, we cannot exclude the possibility that} $\varphi^*(\theta)$ may be degenerate, i.e.\ be the MGF of some nonnegative measure on $\R^d$ that has total mass strictly
less than $1$ (or possibly $0$). To invoke the uniquness result in \cite{DaiKurt1994} and
conclude that $\varphi^*(\theta)=\varphi(\theta)$, we must also  prove that $\varphi^*(\theta)$ is the MGF of \textit{some} probability
measure, i.e.\ that it is not degenerate. For example, $\varphi^*(\theta)\equiv 0$
and $\varphi^*_j(\theta) \equiv 0$ clearly satisfy BAR (\ref{eq:srbmbar}),
but $\varphi(\theta)\neq 0$. To this point, we show that
\red{\sout{if the sequence of probability measures corresponding to $\{\varphi^{(n_k)},\ k \geq 1\}$ was not tight.  If this were to be the case, then the fact that $\varphi^*(\theta)$ and its boundary counterparts satisfy BAR \eqref{eq:srbmbar} would not imply that $\varphi^*(\theta)=\varphi(\theta)$. }}
\begin{equation}
  \label{eq:mfgleftconti}
  \lim_{\theta\uparrow 0} \varphi^*(\theta)=1,
\end{equation}
which implies that the sequence of probability measures corresponding to $\{\varphi^{(n_k)},\ k \geq 1\}$ is tight; see Lemma~\ref{LEM:LAPLACETIGHTNESS}. It turns out that condition \eqref{eq:mfgleftconti} can \blue{be verified algebraically from} the fact that $\varphi^*(\theta)$ and $\varphi_j^*(\theta)$ $(j=1, \ldots, d)$ satisfy BAR \eqref{eq:srbmbar}. \blue{Most of the steps in this algebraic procedure are carried out in Proposition~\ref{lem:int_geq_boundary}.} Once we have this proposition, showing \eqref{eq:mfgleftconti} becomes straightforward; see for instance, the proof of (\ref{eq:mgfcon}). The proof of Proposition~\ref{lem:int_geq_boundary} requires that the reflection matrix of the SRBM be an ${\cal M}$-matrix. \blue{An ${\cal M}$-matrix is an invertible square matrix whose diagonal entries are non-negative, and off diagonal entries are non-positive \cite[Chapter 6]{BermPlem1979}. This condition is always satisfied in the GJN setting, because the reflection matrix of the SRBM has the form $(I - P^{\rs{t}})$, where $P$  routing matrix of the GJN.}

\blue{The procedure of considering a sequence $\varphi^{(n_k)}(\theta) \to \varphi^{*}(\theta)$ (and $\varphi^{(n_k)}_j(\theta) \to \varphi^*_j(\theta)$), and then verifying \eqref{eq:mfgleftconti} looks like an application of L\' evy's convergence theorem \cite[Section 18.1]{Will1991}, with one key difference. L\' evy's result says that if a sequence of characteristic functions (CF) converges, and the limit is continuous at zero, then the corresponding sequence of probability measures converges weakly to a limiting probability measure. Our use of MGFs in this paper, instead of CFs, is not accidental. Applying L\' evy's theorem requires knowing apriori that the sequence (or at least a subsequence) of CFs converges, and for a complicated model like a GJN this is nigh impossible to verify. In contrast, MGFs are monotone functions, and any sequence of MGFs always has a convergent subsequence due to Helly's selection principle \cite[Theorem 8.1]{Gut2005}.}


To prove Proposition~\ref{lem:prelimitMGFBAR} (that $\varphi^{(n_k)}(\theta)$ and its boundary counterparts satisfy BAR (\ref{eq:srbmbar}) asymptotically), we work with a continuous time Markov
process $X^{(n)}=\{X^{(n)}(t), t\ge 0\}$ that describes the dynamics of the $n$th GJN.  In
addition to the queue length process $L^{(n)}=\{L^{(n)}(t), t\ge 0\}$,
this Markov process also keeps track of the remaining interarrival times
and remaining service times at all stations.  Although $L^{(n)}$ is a jump
process taking values $\ell$ in $\Z_+^{d}$, the other component of
$X^{(n)}$ is a piecewise deterministic process taking values $(u,v)$
in $\R^{2d}_+$. Nevertheless, the stationary distribution of $X^{(n)}$
satisfies a BAR (\ref{eq:hard_BAR}) for all ``good'' test functions $f(\ell, u,
v)$. The BAR for $X^{(n)}$ has two components.  The first component involves the deterministic
process and the second involves the jump process; the latter is generally difficult to analyze. To handle this difficulty, we choose
$f(\ell, u, v)$ to be an exponential function of the state variable
$(\ell, u, v)$ of the form 
\begin{align}
  f(\ell, u, v) =e^{ \langle \theta, \ell\rangle + \langle \eta, u\rangle + \langle \zeta, v\rangle }, \label{eq:exptest}
\end{align}
where $(\theta, \eta, \zeta)\in \R^{3d}$ are parameters and $\br{\cdot, \cdot}$ is the Euclidean inner product (we actually use truncated versions of these functions to accommodate general interarrival and service time distributions, which may not have exponential moments, see \eqref{eq:testf}). By judiciously choosing $\eta=\eta(\theta)$ and $\zeta=\zeta(\theta)$ as functions of $\theta \in \R^d$, we eliminate the jump term
of the BAR (\ref{eq:hard_BAR}), leaving us with the jump free BAR \eqref{eq:SE 1}. To obtain Proposition~\ref{lem:prelimitMGFBAR} from this jumpless BAR \eqref{eq:SE 1}, we perform Taylor expansion on $\eta(\theta)$ and $\zeta(\theta)$ to obtain their quadratic approximations (Lemma~\ref{lem:concave 1}), and establish corresponding error bounds (Lemma~\ref{lem:uniform 1}).



Both \cite{BudhLee2009} and \cite{GamaZeev2006} focused on proving the
tightness of $\{r_nL^{(n)}(\infty), n\ge 1\}$ by ingenuously
constructing appropriate Lyapunov functions.  Both papers rely on the Lipschitz continuity of the Skorohod map corresponding to the SRBM for their tightness argument. Such a map does not exist in the
multiclass queueing network setting, which makes the generalization of results from \cite{BudhLee2009} and
\cite{GamaZeev2006} to the multiclass setting difficult. Gurvich
\cite{Gurv2014a} is the only paper that provides a sufficient
condition for proving (\ref{eq:steadyConver}) in the multiclass setting. In addition to the strong exponential moment assumption,
\cite{Gurv2014a} assumes a strong state space collapse (SSC) condition: (c) fluid solutions of a critically loaded fluid model converge
to their equilibria at a  ``uniform linear rate'' in finite time. \red{\sout{His
conditions are satisfied by several classes of service disciplines
including the queue-ratio discipline. However, his condition (c) precludes any results for the
FIFO service discipline.}}  Gurvich
\cite{Gurv2014a} focused on generalizing the approach in
\cite{GamaZeev2006}. In particular, he retained the strong exponential
moment assumption. In a recent paper, Ye and Yao~\cite{YeYao2016}
focused on generalizing the approach in \cite{BudhLee2009} to
resource-sharing networks that lie outside of the multiclass queueing
network setting. Using a multiclass queueing network example, in
Section 5 of \cite{YeYao2016}, the authors outline the steps needed
for generalizing their approach to the multiclass queueing network
setting.  The authors are able to keep the weak moment condition (b'),
relaxing condition (c) to condition (c'): fluid solutions converge to
their equilibria ``uniformly fast'', but not necessarily in finite
time. However, they imposed a strong ``bounded workload condition'', which
is difficult to check in general.

\red{\sout{In our proof, we also establish the tighness of $\{r_n
L^{(n)}(\infty), n\ge 1\}$, but our technique differs from the previous authors'. Instead of proving tightness directly, we focus on proving (\ref{eq:mfgleftconti}), which
implies the tigthness of $\{r_{n_k} L^{(n_k)}(\infty), k\ge 1\}$. We
prove (\ref{eq:mfgleftconti}) through algebraic relationships obtained
through BAR (\ref{eq:srbmbar}). This approach is much simpler than the
Lyapunov function approach in \cite{GamaZeev2006} and
\cite{BudhLee2009}, and hence, more amenable for generalization to
the multiclass setting.}}

\blue{The approach of \citet{GamaZeev2006} is known as the limit interchange argument, and has since been used by others to study steady-state approximations of various queueing systems.} In the single server setting, \citet{Kats2010} studied a multiclass single server queue with feedback, and \citet{ZhanZwar2008} studied a limited processor sharing queue. In the many-server setting, \citet{Tezc2008} considered a parallel-server system with multiple server pools and no customer abandonment, \citet{GamaStol2012} examined a multiclass, many-server queue with abandonment, where customer service and patience times are exponentially distributed with means varying between different customer classes, and \citet{DaiDiekGao2014} considered a many-server queue with abandonment, where service times follow a phase-type distribution. In recent years, several papers \cite{BravDai2017, BravDaiFeng2016, Gurv2014, GurvHuanMand2014, GurvHuan2016} have gone beyond limit theorems, and establish rates of convergence to the approximating distribution. The framework underlying those papers (except for \cite{GurvHuanMand2014}) is known as Stein's method \cite{Stei1986,ChenGoldShao2011, Ross2011}.

\blue{The limit interchange and Stein method frameworks represent the two general approaches used to establish convergence of steady-state distributions. Our paper adds a third approach to this set. Each approach has its own pros and cons. The Stein approach is able to provide rates of convergence, which is a step beyond just convergence, but this comes at a cost. Successfully applying it requires deeper knowledge about the underlying system than either the limit interchange approach, or the one presented in this paper. In particular, Stein's method has not been applied to queueing networks and has so far been limited to systems with a single station. The limit interchange approach has been the prominent method in the past decade.  At its core, it requires the use of a Lyapunov function to prove tightness. However, each stochastic system requires a separate Lyapunov function. Finding one is typically very difficult and can be considered an art. In contrast, the method in this paper is algorithmic in nature, and requires no guesswork to find any Lyapunov function. For instance, there is little wiggle room in choosing the exponential test function in \eqref{eq:exptest}, and our tightness argument in Section~\ref{sect:tightness} is algebraic and ``procedural''. In terms of generality of our method, multiclass queueing networks do add an extra layer of difficulty to our approach. Namely, the presence of SSC in those networks and the fact that the reflection matrix of the SRBM no longer has to be an $\cal M$-matrix. It is the subject of ongoing research to extend our approach to the multiclass setting.}

The rest of the paper is structured as follows. In \sectn{heavy}, we introduce the sequence of GJNs, the heavy traffic condition, describe the approximating SRBM, and state our main results. In \sectn{tractable}, we derive the BAR \eqref{eq:hard_BAR} for each GJN in the sequence, and introduce conditions on test functions under which the jump term \blue{there} disappears. Section~\ref{SECT:PRELIMIT} is devoted to proving Proposition~\ref{lem:prelimitMGFBAR}, which states that the MGFs of the queue lengths of the GJN approximately satisfy the BAR of the SRBM. In \sectn{tightness} we present Proposition~\ref{lem:int_geq_boundary}, which we use together with Proposition~\ref{lem:prelimitMGFBAR} to prove our main result, \thr{heavy traffic 1}, in \sectn{proofs}. We defer proofs of certain technical lemmas to the Appendix.

\dgreen{We advise the reader that the present document serves as an improved
version of the recently published paper \cite{BravDaiMiya2017}. The
improvement contains two minor changes. The first change is in the
first sentence of the proof of Theorem~\ref{thr:heavy traffic 1} in
Section~\ref{sect:proofs}. The existence of a subsequence $\{n''\}$
follows directly from Helly's selection principle; there is no need to
use the notion of ``weak compactness''. The second change is in the
proof of part \eqref{eq:iiclaimtightness} of
Lemma~\ref{LEM:LAPLACETIGHTNESS}, replacing reference
\cite{StadTrau1981} by \cite{Kall2001}.}

\subsection{Notation}
\label{sect:notation}
All random variables and stochastic processes are defined on a common
probability space $(\Omega, \mathcal{F}, \mathbb{P})$, and all stochastic processes $X = \{X(t), t \geq 0\}$ are assumed to be right continuous on $[0,\infty)$, and having left limits on $(0,\infty)$. For a sequence of random variables $\{Y_n,\ n \geq 1\}$ and a random variable $Y$, we write $Y_n \Rightarrow Y$ if $Y_n$ converge in distribution to $Y$. For an
integer $d \geq 1$, $\R^d$ denotes the $d$-dimensional Euclidean space,
and $\Z_+^d$ and $\R_+^d$ denote the spaces of $d$-dimensional vectors whose
elements are non-negative integers and non-negative real numbers, respectively. For vectors $x,y \in \R^d$, we write $x_i$ to denote the $i$th component of $x$, $1 \leq i \leq d$. Furthermore, we write $x \leq y$ if $x_i \leq y_i$ for all $i = 1, ... , d$ and we let $\br{x,y}$ be their Euclidean inner product. All vectors are understood to be column vectors. A function $f : \R^{d} \to \R$ is said to be non-decreasing if $x \leq y$ implies $f(x) \leq f(y)$. For a vector $x \in \R^d$, define the sup-norm $||x||_{\infty} = \sup_{i } \abs{x_i}$. For integers $a, b$ with $a > b$, we define $\sum_{i=a}^{b} = 0$. For integers $i,j$, we let $\delta_{ij}$ be the Kronecker delta; i.e.\ $\delta_{ij} = 1$ if $i=j$, and zero otherwise. We let $x^{\rs{T}}$ and $A^{\rs{T}}$ denote the transpose of a
vector $x$ and matrix $A$, respectively. We reserve $I$
for the identity matrix, $e$ for the vector of all ones and $e^{(i)}$
for the vector that has a one in the $i$th element and zeroes
elsewhere; the dimensions of these vectors will  be clear from the context.

\section{Heavy traffic approximation} \setnewcounter
\label{sect:heavy}
In this section, we introduce the generalized Jackson network and state 
the main results of this paper.

\subsection{Network description}
\label{sect:network}
To be able to state our main results, we first introduce
a generalized Jackson network, and define a Markov process that describes it. This network has $d$ stations, numbered $1,2,
\ldots, d$. Let $\J = \{1,2, \ldots, d\}$. Each station has a single
server that serves customers in the first-in-first-out (FIFO) manner. A station may have customers arriving from outside the network; we refer to such arrivals as external arrivals. A customer who
completes service at a station either goes to another station or leaves the network. The
following is a mathematical description of a GJN.

 Let $\sr{E}$ be the subset of $\J$ whose members are the stations that have external arrivals. External arrivals at station $i \in \sr{E}$ follow a renewal process with i.i.d.\ interarrival times 
\begin{align}
\{T_{e,i}(m),\ m=1, 2, \ldots, \} \label{eq:seq_arrival},
\end{align}
and we let $T_{e,i}$ be a nonnegative random variable having the distribution of the interarrival times. We assume that $T_{e,i}$ has finite variance and a non-zero mean. External arrivals at different stations are independent.

Service times at station $i$ are i.i.d.\ random variables 
\begin{align}
\{T_{s,i}(m),\ m=1,2, \ldots, \} \label{eq:seq_service},
\end{align}
and we let $T_{s,i}$ be a nonnegative random variable having the distribution of the service times. We assume that $T_{s,i}$ has finite variance and a non-zero mean. Service times at different stations are independent.
Service time sequences and external interarrival time sequences are assumed to be independent.

A customer that completes service at station $i \in \J$ goes to station $j\in \J$
with probability $p_{ij}$ or exits the network with probability
$p_{i0} = 1-\sum_{j\in \J}p_{ij}$, independently of everything else.  Let $P$ be the $d\times d$ square matrix whose $(i,j)$th entry is $p_{ij}$ for
$i,j \in \J$.

This queueing network is referred to as a generalized Jackson network
(GJN).  We now introduce a Markov process for describing the GJN. For
time $t \ge 0$, denote the number of customers, the residual external
arrival time, and the residual service time at station $i \in \J$ by
$L_{i}(t)$, $R_{e,i}(t)$ and $R_{s,i}(t)$, respectively. We set $R_{e,i}(t) = 0$ for $i \in \J \setminus \sr{E}$, and
$R_{s,i}(t) = T_{s,i}(m)$ if no customer is in service at
  station $i$ at time $t$, and the service time of the next customer at
  station $i$ is $T_{s,i}(m)$.  In Section 2.1 of \cite{Dai1995} and
Section 2.1.1 of \cite{BudhLee2009}, $R_{s, i}(t)$ is defined to be
zero if no customer is in service at time $t$. Our definition will make it slightly easier to derive condition (\ref{eq:cond2}) in \sectn{dynamics} to annihilate the jump term in \eqref{eq:hard_BAR} there.

Denote the vectors whose entries are $L_{i}(t)$,
$R_{e,i}(t)$ and $R_{s,i}(t)$  by $L(t)$, $R_{e}(t)$ and
$R_{s}(t)$,  respectively. Throughout the paper we refer to $\{L(t), t \geq 0\}$ as the queue length process (even though it includes customers currently in service as well). Let $X(t) = ({L}(t), {R}_{e}(t),
  {R}_{s}(t))$, then $X =\{X(t), t\ge 0\}$ is a Markov process
with state space $\dd{Z}_{+}^{d} \times \dd{R}_{+}^{2d}$.
   
The GJN is a natural generalization of the Jackson network, but its
stationary distribution is hard to get. This motivates the study of heavy
traffic approximations in \cite{Reim1984} and \cite{John1983}. To
introduce the notation of heavy traffic, we introduce a sequence of \blue{generalized}
Jackson networks in the next section.



\subsection{A Sequence of Networks and Their Assumptions}
\label{sect:sequence}

Consider a sequence of GJNs indexed by $n =1,2,\ldots$. We denote the
$n$th GJN by superscript $^{(n)}$. For example, $X(t), {L}(t),
{R}_{e}(t), {R}_{s}(t)$ are denoted by $X^{(n)}(t)$, ${L}^{(n)}(t)$,
${R}_{e}^{(n)}(t)$, ${R}_{s}^{(n)}(t)$, respectively. We assume the
routing matrix $P=(p_{ij})$ is independent of $n$.  The networks are
assumed to be open, i.e., the matrix $(I-P)$ is invertible; the
inverse is given by
\begin{equation}
  \label{eq:invertible}
(I-P)^{-1}=  I + P +P^2 + \ldots.
\end{equation}
For $i \in \sr{E}$, we denote the mean and variance of $T_{e,i}^{(n)}$ by $1/\lambda^{(n)}_{e,i}$ and $(\sigma_{e,i}^{(n)})^2$, respectively.  For notational simplicity, we adopt the conventions that $\lambda_{e, i} = 0$ and $(\sigma_{e,i}^{(n)})^2 = 0$ for $i \in \J \setminus \sr{E}$. Similarly, for $j \in \J$, we denote the mean and variance of $T_{s,j}^{(n)}$ by $1/\lambda^{(n)}_{s,j}$ and $(\sigma_{s,j}^{(n)})^2$, respectively. Let $\lambda^{(n)}_{a,i}$ for $i \in
\J$ be the solution of the traffic equation:
\begin{align*}
  \lambda^{(n)}_{a,i} = \lambda^{(n)}_{e,i} + \sum_{j \in \J} \lambda^{(n)}_{a,j} p_{ji}, \qquad i \in \J.
\end{align*}
Then $\lambda^{(n)}_{a,i}$ can be interpreted as the total arrival rate at station $i$.  The traffic
equation  can be written as the vector valued equation:
\begin{equation}
  \label{eq:traffic 2}
\lambda_{a}^{(n)} = \lambda_{e}^{(n)} + P^{\rs{t}}
\lambda_{a}^{(n)},
\end{equation}
where  $P^{\rs{t}}$ is the transpose of $P$. Equation (\ref{eq:traffic 2}) has a unique  solution  given by
$\lambda_{a}^{(n)}=(I-P^{\rs{t}})^{-1}\lambda_{e}^{(n)}$.

 We assume the following heavy traffic conditions: there exists a positive vector $b \in \R^d_+$, and a sequence of positive numbers $r_{n}$ such that
\begin{align}
\label{eq:heavy 1}
 & \lambda^{(n)}_{s} - \lambda^{(n)}_{a} = b\, r_{n}, \qquad n \ge 1,\\
\label{eq:heavy 2}
 & \lim_{n \to \infty} {r}_{n} = 0.
\end{align}
 It will be
convenient to express condition \eq{heavy 1} in terms of the
primitive data $\lambda_{e}^{(n)}$ and $\lambda_{s}^{(n)}$.  For this, we substitute $\lambda^{(n)}_{a}=\lambda^{(n)}_{s}-b\, r_n$ into both
sides of \eq{traffic 2} to get
\begin{align}
\label{eq:heavy 3}
  \lambda_{s}^{(n)} - \lambda_{e}^{(n)} - P^{\rs{T}} \lambda_{s}^{(n)}= r_{n} Rb,
\end{align}
where 
\begin{equation}
  \label{eq:R}
  R = I-P^{\rs{T}}.
\end{equation}
Note that $r_{n}$ is chosen to be $1/\sqrt{n}$ in \cite{Reim1984} and
much of the literature as well, but it is intuitively clear that this
is not essential as long as \eqref{eq:heavy 1} holds and $r_{n}$ converges to zero as $n \to \infty$. For
example, some authors take $r_{n} =1/n$ (see, e.g.,
\cite{ChenMand1991}). In this paper, we do not make any specific
choice for $r_{n}$; this conveys
the same spirit of \citet{King1962}'s heavy traffic approximation (see
\cite{Miya2015} for details). 
 We make the following moment assumptions on the sequence of networks:
\begin{eqnarray}
  \label{eq:momentarr}
&&    \sigma^{(n)}_{e, i} \to     \sigma_{e, i}<\infty, \quad \lambda_{e, i}^{(n)}\to \lambda_{e, i} > 0 \quad \text{ for each } i \in {\cal E}, \\
&&    \sigma^{(n)}_{s, j} \to     \sigma_{s, j}<\infty  \quad \text{ for each } j \in {\J}.\label{eq:momentser}
\end{eqnarray}
In addition, we assume
\begin{eqnarray}
&&  \big\{ \big(T^{(n)}_{e,i}\big)^2, n \ge  1\big\} \text{ is uniformly integrable  for each }  \quad i\in {\cal E}, \label{eq:uniarr}\\
&&  \big\{ \big(T^{(n)}_{s,j}\big)^2, n \ge  1\big\} \text{ is uniformly integrable  for each }  \quad j\in {\cal J}. \label{eq:uniser}
\end{eqnarray}
Following traffic equation \eq{traffic 2}, conditions  \eq{heavy 1},  \eq{heavy 2}, and \eq{momentarr} imply  that
  \begin{equation}
    \label{eq:lamdaconverge}
\lambda^{(n)}_{a,j}\to \lambda_{a,j}, \quad \text{ and } \quad \lambda^{(n)}_{s,j}\to \lambda_{s,j}=\lambda_{a,j} \quad \text{ for } j\in \J,
  \end{equation}
  where $\lambda_a=(I-P^{\rs{t}})^{-1}\lambda_{e}$.

The diffusion approximation focuses on the sequence $L^{(n)} = \{L^{(n)}(t), t\ge 0\}$, which is the first component of $X^{(n)}$. Clearly, $L^{(n)}$ is not a Markov process in general, but the standard approach in the literature (e.g. \cite{John1983, Reim1984}) shows that the diffusion-scaled process $Z^{(n)}=\{Z^{(n)}(t), t\ge 0\}$, defined as
\begin{align}
\label{eq:diffusion scaling}
  Z^{(n)}(t) = r_{n} {L}^{(n)}\big(t/{r_{n}^{2}} \big), \quad t\ge 0,
\end{align}
converges in distribution to a semimartingale reflecting Brownian motion (SRBM) $Z=\{Z(t), t\ge 0\}$ (to be defined in Section~\ref{sect:main}).  The heavy traffic assumptions \eq{heavy 1} and \eq{heavy 2} are crucial for the time and space scalings in \eqref{eq:diffusion scaling} to be correct.

 We also assume that for each $n$,
\begin{equation}
  \label{eq:positiveRecurrent}
  X^{(n)} = \{X^{(n)}(t), t\ge 0\} \text{ is positive Harris recurrent}.
\end{equation}
This assumption is satisfied under heavy traffic condition \eq{heavy
  1} and some additional regularity assumptions on the interarrival time
distributions; see, for example,  Theorem 3.8 of \citet{DownMeyn1994} and Theorem~5.1 of \citet{Dai1995}. Since $X^{(n)}$ is positive Harris
recurrent, it has a unique stationary distribution. We let $X^{(n)}(\infty)$ be the vector having that stationary distribution.

Since $X^{(n)}$ is assumed to have a stationary distribution,
$L^{(n)}$ has a stationary distribution. 
We use $L^{(n)}(\infty)$ and $Z^{(n)}(\infty)$ to
denote the random vectors having the stationary distributions of
$L^{(n)}$ and $Z^{(n)}$, respectively. Note that the stationary distribution of $Z^{(n)}$ is independent of the time scaling in \eqref{eq:diffusion scaling} for each fixed $n$. Furthermore, it is clear that $Z^{(n)}(\infty) \stackrel{d}{=} {r}_{n} L^{(n)}(\infty)$. For future reference, 
we use $\pi^{(n)}$  to denote the stationary distribution of $Z^{(n)}$. As stated in \eqref{eq:steadyConver}, the primary result of this paper is to prove that $Z^{(n)}(\infty) \Rightarrow Z(\infty)$, where $Z(\infty)$ has the stationary distribution of the SRBM $Z$, which we now define.


\subsection{Semimartingale Reflecting Brownian Motions and BAR}
\label{sect:main}
Recall the matrix $R$, defined in (\ref{eq:R}), and set
\begin{align} \label{eq:mu}
\mu=-Rb \in \R^d,
\end{align} 
where $b$ is given in
\eq{heavy 1}. Let $\Sigma= (\Sigma_{ij}) $ be a $d\times d$ symmetric matrix
given by
\begin{align}\label{eq:Sig}
  \Sigma_{ij} & = \sum_{k \in \J} \lambda_{s,k} \big[ p_{ki} (\delta_{ij}
  - p_{kj}) \\
  & \quad + \lambda_{s, k}^{2} \sigma_{s, k}^{2} (p_{ki} -
  \delta_{ki}) (p_{kj} - \delta_{kj}) \big] + \lambda_{e,i}^{3} \sigma_{e, i}^{2}
  \delta_{ij}, \nonumber
\end{align}
where $\delta_{ij}$ is the Kronecker delta, $p_{ij}$ are
the routing probabilities in the GJN, and the quantities $\lambda_{e,i}$,
$\sigma_{e,i}$, $\sigma_{s,i}$, and $\lambda_{s,i}$ are given in
\eq{momentarr}, \eq{momentser}, and
\eq{lamdaconverge}, respectively. The matrix $\Sigma$ is
always non-negative definite. Throughout the document, we assume that
\begin{equation}
  \label{eq:Sigmapd}
  \Sigma \text{ is positive definite,}
\end{equation}
\blue{which is a standard assumption in both \cite{GamaZeev2006, BudhLee2009}.}
Associated with the data $(\mu, \Sigma, R)$ is $Z=\{Z(t),t\ge 0\}$, a semimartingale
reflecting Brownian motion (SRBM) that satisfies
\begin{align}
&  {Z}(t) = {\xi}(t)+ R {Y}(t), \quad t\ge 0, \label{eq:srbm1}\\
&  {\xi} \text{ is a  $d$-dimensional Brownian motion with drift $\mu$} \\
&\text{ and covariance matrix $\Sigma$,} \notag \\
& Y(0)=0, \text{ each component of $Y$ is non-decreasing, and }  \\
&  \int_0^\infty Z_j(t) dY_j(t) = 0, \quad j\in \J. \label{eq:srbm4}
\end{align}
The matrix $R$ is called the reflection matrix of $Z$. Recall that an ${\cal M}$-matrix is an invertible square matrix whose diagonal entries are non-negative, and off diagonal entries are non-positive \cite[Chapter 6]{BermPlem1979}. Since $R$ in \eq{R} is an ${\cal M}$-matrix, it follows from
\cite{HarrReim1981} that the SRBM $Z$ exists and is unique as a
strong solution to \eq{srbm1}--\eq{srbm4}. 

Again, because $R$ is an ${\cal M}$-matrix and condition 
\begin{equation}
  \label{eq:driftcondidtion}
 R^{-1}\mu = -b <0
\end{equation}
is satisfied, 
it follows from \cite{HarrWill1987} that the SRBM $Z$ has a unique stationary distribution $\pi$ on $(\R^d_+, {\cal B}(\R^d_+))$, where $\sr{B}(\dd{R}_{+}^{d})$ is the Borel field of $\dd{R}_{+}^{d}$. We now discuss the characterization of $\pi$.

Let $\dd{E}_{\pi}$ denote the expectation when ${Z}(0)$ has distribution $\pi$. For $j \in \J$, define the boundary probability measure $\pi_{j}$ by
\begin{align*}
  \pi_{j}(B) =\frac{1}{\E_{\pi} [Y_j(1)]} \dd{E}_{\pi} \left[ \int_{0}^{1} 1(Z(t) \in B) dY_{j}(t)\right], \qquad B \in \sr{B}(\dd{R}_{+}^{d}).
\end{align*}
We know that $\E_{\pi} [Y_j(1)] < \infty$ by \cite[Theorem 1]{HarrWill1987}. Note that $Y_{j}(t)$ increases only on the face 
 \begin{align*}
   F_{j} = \{{x} \in \dd{R}_{+}^{d}: x_{j} = 0\}, \qquad j \in \J.
 \end{align*}
 Therefore, $\pi_{j}$ concentrates on $F_{j}$, but we define it
 on $\dd{R}_{+}^{d}$ for notational simplicity.

 The following lemma gives a characterization of the stationary distribution
 $\pi$ and its associated boundary measures $\pi_1$, $\ldots$, $\pi_d$.
 \begin{lemma}
\label{lem:barneccsuff}
 (a) The stationary distribution  $\pi$ and its associated boundary measures $\pi_1$, $\ldots$, $\pi_d$ must 
satisfy  the following basic adjoint relationship (BAR)
\begin{align}
  \label{eq:bar}
  & \int_{\R^d_+} G f(x) \pi(dx) + \sum_{j\in \J} \E_{\pi} [Y_j(1)] \int_{\R^d_+}\langle  \nabla f(x), R^{(j)}\rangle  \pi_j(dx)=0, \\
& \qquad \text{ for each }f\in C^2_b(\R^d_+), \notag 
\end{align}
where $R^{(j)}$ is the $j$th column of $R$, $C^2_b(\R^d_+)$ is the
set of functions $f$ on $\R^d_+$ such that $f$, its first order
derivatives, and its second order derivatives are bounded and
continuous, and 
\begin{equation}
  \label{eq:G}
  Gf (x) = \frac{1}{2}\sum_{i,j=1}^d \Sigma_{ij} \frac{\partial ^2f}{\partial x_i \partial x_j}(x) + \sum_{j=1}^d \mu_j \frac{\partial f}{\partial x_j} (x).
\end{equation}
(b) Conversely, assume that $\pi$, $\pi_1$, $\ldots$, $\pi_d$
are probability measures on $\R^d_+$ satisfying BAR
\eq{bar}. Then $\pi$ must be the stationary distribution of
the $(\mu, \Sigma, R)$-SRBM, and $\pi_1$, $\ldots$, $\pi_d$ the corresponding
boundary measures.
 \end{lemma}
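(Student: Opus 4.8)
The plan is to prove the two directions separately. Part (a) (necessity) will follow from Itô's formula combined with the stationarity of $Z$, and part (b) (sufficiency, i.e.\ uniqueness) is the characterization theorem of \cite{DaiKurt1994}, whose hypotheses hold verbatim in our setting and which I will simply invoke.

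For part (a), fix $f \in C^2_b(\R^d_+)$ and let $Z$ start from $\pi$. Applying Itô's formula to $f(Z(t))$ and using the semimartingale decomposition $Z = \xi + RY$ from \eqref{eq:srbm1} — noting that $Y$ is continuous and of bounded variation and hence contributes nothing to the quadratic variation, which equals $\Sigma t$ — one obtains
\[
f(Z(t)) - f(Z(0)) = \int_0^t Gf(Z(s))\,ds + \sum_{j\in\J}\int_0^t \langle \nabla f(Z(s)), R^{(j)}\rangle\,dY_j(s) + M(t),
\]
with $M$ a local martingale and $G$ as in \eqref{eq:G}. Boundedness of $f$ and $\nabla f$ together with $\E_\pi[Y_j(1)]<\infty$ (from \cite[Theorem 1]{HarrWill1987}) makes every term integrable and $M$ a true mean-zero martingale. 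Taking $\E_\pi$, using $\E_\pi[f(Z(t))]=\E_\pi[f(Z(0))]$, and using time-homogeneity to replace $\E_\pi[Gf(Z(s))]$ by $\int Gf\,d\pi$ turns the bulk term into $t\int Gf\,d\pi$. For each boundary term, stationarity of the increments of $(Z,Y)$ makes $s\mapsto \E_\pi[\int_0^s \langle\nabla f(Z(u)),R^{(j)}\rangle\,dY_j(u)]$ linear in $s$, and its value at $s=1$ equals $\E_\pi[Y_j(1)]\int\langle\nabla f,R^{(j)}\rangle\,d\pi_j$ by the definition of $\pi_j$ (recall that $Y_j$ increases only on $F_j$ by \eqref{eq:srbm4}, which is why $\pi_j$ concentrates there). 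Dividing by $t$ yields BAR \eqref{eq:bar}.

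For part (b), the statement is that the family of identities \eqref{eq:bar}, ranging over all $f\in C^2_b(\R^d_+)$, determines $\pi$ and $\pi_1,\dots,\pi_d$ uniquely among probability measures and forces $\pi$ to be the stationary law of the $(\mu,\Sigma,R)$-SRBM. Since $R$ is an ${\cal M}$-matrix and the drift condition \eqref{eq:driftcondidtion} holds, the SRBM is well defined with a unique stationary distribution of finite boundary mass, so the hypotheses of \cite{DaiKurt1994} are met and part (b) is exactly their theorem.

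The only genuinely hard ingredient is the uniqueness in part (b): because the test class $C^2_b(\R^d_+)$ does not contain exponential functions, one cannot recover the measures by a transform argument, and \cite{DaiKurt1994} instead proves uniqueness through a resolvent/approximation argument. I will not reproduce that here; for the purposes of this paper it suffices to cite it, and the later reduction of \eqref{eq:bar} to the exponential (MGF) form of BAR used in the main proof — which requires truncating the exponential test functions so they lie in $C^2_b(\R^d_+)$ — will be carried out separately.
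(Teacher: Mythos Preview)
Your proposal is correct and matches the paper's approach: the paper simply cites \cite{HarrWill1987} for part (a) and \cite{DaiKurt1994} for part (b), and your sketch of the It\^o-formula argument for (a) is exactly the computation underlying the former reference. No gap.
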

 \begin{proof}
   Part (a) follows from \cite{HarrWill1987}. Part (b) follows from
   \cite{DaiKurt1994}.
 \end{proof}

 We denote the moment generating functions (MGFs) of $\pi$ and $\pi_{j}$ by $\varphi$ and $\varphi_{j}$, respectively. Namely, for $\theta \in \R^d$ with $\theta \le 0$,
\begin{align} \label{eq:mgfsrbmdef}
 & \varphi({\theta}) = \dd{E}_{\pi}[ e^{\langle {\theta}, {Z}(0)\rangle}], \\
 & \varphi_{j}({\theta}) = \frac{1}{\E_{\pi} [Y_j(1)]} \dd{E}_{\pi} \left[ \int_0^1  e^{\langle
      {\theta}, {Z}(t)\rangle}dY_{j}(t)\right], \quad j \in \J. \nonumber
\end{align}
We also define
\begin{align}
\label{eq:gamma 1}
 & \gamma({\theta})=\frac{1}{2}\langle{\theta}, \Sigma{\theta}\rangle + \br{-Rb, {\theta}}, \\
 & \gamma_{j}({\theta}) = \br{R^{(j)}, {\theta}}, \qquad  {\theta} \in \dd{R}^{d},\ j \in \J. \notag
\end{align}
Plugging $f(x)=e^{\br{\theta, x}}$ into \eqref{eq:bar}, the BAR \eqref{eq:bar} becomes
\begin{align} \label{eq:intermed_srbm_mgf_bar}
  &\gamma({\theta}) \varphi({\theta}) + \sum_{j=1}^d \E_{\pi} [Y_j(1)]  \gamma_{j}({\theta}) \varphi_{j}\bigl({\theta}\bigr) = 0, \\ 
 & \qquad \text{ for each } \theta \in \R^d \text{ with } \theta\le 0, \notag 
\end{align}
and we now use it to show that
 \begin{equation}
  \label{eq:bEy}
 \E_{\pi} [Y_j(1)] = b_j, \quad j \in \J.
\end{equation}
To do so, we let $\theta = \alpha e^{(k)}$, where $\alpha \leq 0$ is a real number and $k \in \J$. Dividing both sides of \eqref{eq:intermed_srbm_mgf_bar} by $\alpha$ and taking $\alpha \uparrow 0$, we obtain
\begin{align*}
  -\br{Rb, {e^{(k)}}} + \sum_{j=1}^d \E_{\pi} [Y_j(1)]\br{R^{(j)}, e^{(k)}} = \sum_{j=1}^d \big(\E_{\pi} [Y_j(1)] - b_j \big) r_{kj} = 0
\end{align*}
for all $k \in \J$, where $r_{kj}$ is the $(k,j)$th entry of $R$. In vector form, this is equivalent to 
\begin{align*}
R\big( \E_{\pi} [Y(1)] - b \big) = 0,
\end{align*}
and since $R$ is invertible, \eqref{eq:bEy} follows. Therefore, \eqref{eq:intermed_srbm_mgf_bar} can be rewritten in the more practical form 
\begin{align}
\label{eq:srbmbar}
  \gamma({\theta}) \varphi({\theta}) + \sum_{j=1}^d b_j\gamma_{j}({\theta}) \varphi_{j}\bigl({\theta}\bigr) = 0, \quad \text{ for each } \theta \in \R^d \text{ with } \theta\le 0.
\end{align}
We refer to this as the MGF version of the BAR.

It is shown in the appendix of the arXiv version of \cite{DaiMiyaWu2014}
that the MGF version of the BAR (\ref{eq:srbmbar}) and the standard version
the BAR \eq{bar} are equivalent.
Thus, it follows from the characterization obtained in \cite{DaiKurt1994} that $\varphi({\theta})$ and $\varphi_{j}\bigl({\theta}\bigr)$ are the unique moment generating functions that satisfy \eqref{eq:srbmbar}.

\subsection{Main Results}
\label{sect:results}
Recall that $\pi^{(n)}$ is the stationary distribution of $Z^{(n)}$ and for $j \in \J$, define $\pi_j^{(n)}$ to be the distribution of $\big[Z^{(n)}(\infty) \big| Z^{(n)}_j(\infty) = 0\big]$. We are now ready to present our main results. 

\begin{theorem}
\label{thr:heavy traffic 1}
Consider the sequence of GJNs indexed by $n$. Assume the heavy traffic conditions
\eq{heavy 1} and  \eq{heavy 2}, positive recurrence condition
  \eq{positiveRecurrent}, and moment conditions
  \eq{momentarr}--\eq{uniser}  hold. Then, 
\begin{enumerate}[(a)]
\item As $n \to
\infty$, the stationary distribution ${\pi}^{(n)}$ of $Z^{(n)}$ converges weakly to $\pi$, the stationary
distribution of the SRBM $Z$.
\item For each $j\in \J$,
${\pi}^{(n)}_{j}$ converges weakly to $\pi_{j}$, the
corresponding boundary measure on $F_j$.
\end{enumerate} 
\end{theorem}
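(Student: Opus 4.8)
The plan is to deduce both parts of \thr{heavy traffic 1} from the pointwise convergence of moment generating functions (MGFs) on the negative orthant $\{\theta\in\R^d:\theta\le 0\}$. For $\theta\le 0$, let $\varphi^{(n)}(\theta)=\E\big[e^{\br{\theta,Z^{(n)}(\infty)}}\big]$ be the MGF of $\pi^{(n)}$, and let $\varphi^{(n)}_j(\theta)$ be the MGF of $\pi^{(n)}_j$, the conditional law of $Z^{(n)}(\infty)$ given $Z^{(n)}_j(\infty)=0$ (as defined in \sectn{results}); let $\varphi,\varphi_j$ denote the MGFs of $\pi,\pi_j$ from \eqref{eq:mgfsrbmdef}. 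Since Laplace transforms determine probability measures on $\R^d_+$, and since pointwise convergence of Laplace transforms to the Laplace transform of a probability measure entails weak convergence, it suffices to show $\varphi^{(n)}(\theta)\to\varphi(\theta)$ and $\varphi^{(n)}_j(\theta)\to\varphi_j(\theta)$ for every $\theta\le 0$ and $j\in\J$. By the usual subsequence characterization of convergence, this reduces to proving that \emph{every} subsequence of $\{n\}$ admits a further subsequence $\{n_k\}$ along which $\varphi^{(n_k)}\to\varphi$ and $\varphi^{(n_k)}_j\to\varphi_j$ pointwise on $\{\theta\le 0\}$.

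So fix a subsequence. Each $\varphi^{(n)}$ and each $\varphi^{(n)}_j$ is monotone in $\theta$ and bounded by $1$ on $\{\theta\le 0\}$, so Helly's selection principle \cite[Theorem~8.1]{Gut2005} lets us pass to a further subsequence $\{n_k\}$ along which $\varphi^{(n_k)}(\theta)\to\varphi^*(\theta)$ and $\varphi^{(n_k)}_j(\theta)\to\varphi^*_j(\theta)$ for every $\theta\le 0$, with limits $\varphi^*,\varphi^*_j$ that are again monotone and bounded by $1$. The principal analytic input is Proposition~\ref{lem:prelimitMGFBAR}, which states that the prelimit MGFs satisfy the SRBM BAR \eqref{eq:srbmbar} asymptotically; passing to the limit along $\{n_k\}$ yields
\[
\gamma(\theta)\,\varphi^*(\theta)+\sum_{j\in\J} b_j\,\gamma_j(\theta)\,\varphi^*_j(\theta)=0,\qquad \theta\le 0.
\]
On its own this does not identify $\varphi^*$ with $\varphi$: a priori $\varphi^*$ could be the MGF of a nonnegative measure of total mass strictly below $1$, because mass might escape to infinity along the sequence; indeed $\varphi^*\equiv 0$, $\varphi^*_j\equiv 0$ also solves the displayed relation.

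The crux is to exclude this degeneracy, i.e.\ to establish \eqref{eq:mfgleftconti}, namely $\lim_{\theta\uparrow 0}\varphi^*(\theta)=1$, together with its boundary counterpart $\lim_{\theta\uparrow 0}\varphi^*_j(\theta)=1$. Here I would invoke Proposition~\ref{lem:int_geq_boundary}, which extracts from the limiting BAR an algebraic comparison of the interior MGF $\varphi^*$ with the boundary MGFs $\varphi^*_j$ --- using crucially that the reflection matrix $R=I-P^{\rs{t}}$ is an ${\cal M}$-matrix --- and then substituting that comparison back into the limiting BAR and letting $\theta\uparrow 0$ along the coordinate axes forces \eqref{eq:mfgleftconti}. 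Because $\varphi^*$ is a monotone limit of MGFs, \eqref{eq:mfgleftconti} is precisely the statement that $\{\pi^{(n_k)}\}$ is tight (Lemma~\ref{LEM:LAPLACETIGHTNESS}); hence every sub-subsequence of $\{\pi^{(n_k)}\}$ has a weak limit, necessarily a probability measure whose Laplace transform coincides with $\varphi^*$, so $\varphi^*$ is genuinely the MGF of a probability measure, and likewise each $\varphi^*_j$. Now the uniqueness assertion in Lemma~\ref{lem:barneccsuff}(b) --- equivalently, the uniqueness of MGFs solving \eqref{eq:srbmbar} recorded at the end of \sectn{main} --- applies to the probability measures with MGFs $\varphi^*,\varphi^*_1,\ldots,\varphi^*_d$ and forces $\varphi^*=\varphi$ and $\varphi^*_j=\varphi_j$. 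Since the initial subsequence was arbitrary, $\varphi^{(n)}\to\varphi$ and $\varphi^{(n)}_j\to\varphi_j$ pointwise on $\{\theta\le 0\}$, and the weak convergences claimed in (a) and (b) follow.

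I expect the genuine difficulty to reside in the two propositions invoked above rather than in the assembly. Proposition~\ref{lem:prelimitMGFBAR} requires starting from the exact prelimit BAR \eqref{eq:hard_BAR} for the Markov process $X^{(n)}$, annihilating its (hard-to-analyze) jump term by the exponential test functions \eqref{eq:exptest} with suitably tailored exponents $\eta(\theta),\zeta(\theta)$, and then Taylor-expanding those exponents while controlling every remainder uniformly in $n$ under only the weak moment hypotheses \eqref{eq:uniarr}--\eqref{eq:uniser}; this is where most of the technical work lies. The other --- and conceptually novel --- hard point is Proposition~\ref{lem:int_geq_boundary}: establishing the non-degeneracy \eqref{eq:mfgleftconti}, for both $\varphi^*$ and the boundary MGFs $\varphi^*_j$ (Lemma~\ref{lem:barneccsuff}(b) needs \emph{all} of $\pi^*,\pi^*_1,\ldots,\pi^*_d$ to be probability measures), purely algebraically from the limiting BAR and with no Lyapunov function, using only that $R=I-P^{\rs{t}}$ is an ${\cal M}$-matrix. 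This is the step that, in the present approach, takes the place of the Lyapunov-function tightness arguments of \cite{GamaZeev2006} and \cite{BudhLee2009}.
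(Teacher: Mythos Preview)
Your proposal is correct and follows essentially the same route as the paper's proof: extract a convergent subsequence via Helly, use Proposition~\ref{lem:prelimitMGFBAR} to pass to the limiting BAR, invoke Proposition~\ref{lem:int_geq_boundary} to rule out degeneracy, and conclude via Lemma~\ref{LEM:LAPLACETIGHTNESS} and the uniqueness in Lemma~\ref{lem:barneccsuff}(b).

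One step you describe somewhat loosely is the precise mechanism by which Proposition~\ref{lem:int_geq_boundary} yields \eqref{eq:mfgleftconti}. You write ``substituting that comparison back into the limiting BAR and letting $\theta\uparrow 0$ along the coordinate axes,'' but no second substitution into the BAR is needed. The paper's argument is cleaner: apply Proposition~\ref{lem:int_geq_boundary} with $A=\{j\}$ to obtain $\varphi^*(0_{\{j\}}-)\ge\varphi^*_j(0_{\{j\}}-)$; then the key observation is that $\varphi^*_j$ is \emph{independent of $\theta_j$} (since $\pi^{(n)}_j$ is supported on $\{z_j=0\}$), so $\varphi^*_j(0_{\{j\}}-)=\varphi^*_j(0)=1$, forcing $\varphi^*(0_{\{j\}}-)=1$ for every $j$. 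This gives tightness of each marginal $\{Z_j^{(n_k)}(\infty)\}$, hence joint tightness, hence $\varphi^*(0_{\J}-)=1$ via Lemma~\ref{LEM:LAPLACETIGHTNESS}. Finally, the equality \eqref{eq:A 3.1} (the case $A=\J$ of Proposition~\ref{lem:int_geq_boundary}) gives $\varphi^*_j(0_{\J}-)=1$ for each $j$. You should make this ``seed'' step explicit, since it is what anchors the algebraic inequalities of Proposition~\ref{lem:int_geq_boundary} to an actual numerical value.
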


Part (b) appears to be new. As mentioned in \sectn{sequence}, Part (a) of the theorem is known, but we prove it using a new approach. To elaborate on this approach, we first discuss the existing approach that uses process limit and tightness.

Assuming the distribution of ${Z}^{(n)}(0)$ converges,
\citet{Reim1984} proves that the scaled queue length process
$\{{Z}^{(n)}(t), t\ge 0\} $ converges in distribution to the
SRBM $Z$; we refer to this as the process limit. Namely,
\begin{align}
\label{eq:process limit 1}
  {Z}^{(n)}(\cdot) \Rightarrow  {Z}(\cdot) \quad \text{ as $n \to \infty$,}
\end{align}
Also, it is proved in \cite{HarrWill1987} that 
\begin{equation}
  \label{eq:Zconvsd}
  Z(t) \Rightarrow Z(\infty) \quad \text{ as } t\to\infty,
\end{equation}
where $Z(\infty)$ is a random vector having distribution $\pi$.
From \eq{process limit 1} and \eq{Zconvsd}, it is not surprising that ${Z}^{(n)}(\infty) \Rightarrow Z(\infty)$, which is the content of part (a) of Theorem~\ref{thr:heavy traffic 1}.
Here, $ {Z}^{(n)}(\infty)$ is the random vector having the
stationary distribution of $ \{{Z}^{(n)}(t), t\ge
0\}$. As we mentioned earlier, part (a) of Theorem~\ref{thr:heavy traffic 1} was already proved by \citet{GamaZeev2006}, and \citet{BudhLee2009}. Both sets of authors rely on the process limit result \eq{process limit 1}, and reduce the problem to showing tightness
of the stationary distributions $\{\pi^{(n)}:n=1, 2,\ldots\}$. Together with the process limit, this tightness implies \eqref{eq:steadyConver}. Our proof of Theorem
\ref{thr:heavy traffic 1} will not use the process limit \eq{process limit 1}. Instead, we work directly with the BAR associated with each network in the sequence, which we now derive in \sectn{tractable}.
  


\section{Network dynamics and basic adjoint relationship} \setnewcounter
\label{sect:tractable}
In \sectn{dynamics} we derive the BAR for the GJN and describe a special class of test functions for which the BAR reduces to a very simple expression. In \sectn{test}, we focus on a family of exponential test functions that belong to this special class.
This lays down the foundation for Section~\ref{SECT:PRELIMIT}, where we compare the BAR of the GJN to that of the approximating SRBM.

\subsection{Network Dynamics and Tractable BAR}
\label{sect:dynamics}
In this section, we derive the BAR (\ref{eq:hard_BAR}) for a GJN using sample path dynamics of the network. The main result of this section is Lemma~\ref{lem:MCbar} below. It describes a special class of functions for which the BAR has a simple form. This lemma can be compared to the BAR in \citet{Miya2017}, which studies a heterogeneous multiserver
queue. However, our approach is different and we make detailed comments on this difference at the end of
this section.

 We first describe the network dynamics in terms
of network primitives.  We fix a network in the sequence and temporarily drop the index $n$ for the remainder of the section. For station $j \in \J$, we introduce the sequence of
i.i.d.\ random vectors 
\begin{align}
\{\phi^{(j)}(m) \in \Z^{d}_+,\ m =1, 2, \ldots\} \label{eq:routing_decisions}
\end{align} 
with
\begin{align*}
&\dd{P}(\phi^{(j)}(1)=e^{(k)})=p_{jk}, \quad k \in {\cal J} \\ 
&\dd{P}(\phi^{(j)}(1)=0)=1-\sum_{k\in{\cal J}}p_{jk} = p_{j0},
\end{align*}
where $p_{jk}$ is the probability that a customer goes to station $k$ after completing service at station $j$. These random vectors represent the routing of customers after completion of service at station $j$.

Recall the definition of $X = \{X(t), t \geq 0\}$ from Section~\ref{sect:network}, and let 
\begin{align*}
X(0) = (L(0), R_e(0), R_s(0))\in \Z_+^d \times \R_+^{2d}
\end{align*} 
be the initial state of the network. We recall the sequences of interarrival and service times from \eqref{eq:seq_arrival} and \eqref{eq:seq_service}, and for each integer $q \geq 1$ we define the primitive processes
\begin{align}
 & U_i(q)= R_{e,i}(0) + \sum_{m=1}^{q-1}
  T_{e,i}(m), \quad i\in {\cal E}, \label{eq:arrival_times} \\
& V_j(q)= R_{s,j}(0) +  \sum_{m=1}^{q-1} T_{s,j}(m), \quad j\in {\cal J}, \label{eq:service_cumul_times} \\
 & \Phi^{(j)}(q) = \sum_{m=1}^q \phi^{(j)}(m), \quad j\in {\cal J}. \label{eq:routing_vector}
\end{align}
For station $j \in \J$, and time $t \geq 0$, let $E_j(t)$ and $D_j(t)$ be the total number of external arrivals and total number of departures, respectively, on the interval $[0,t]$. Let 
\begin{align}
B_j(t) = \int_0^t 1(L_j(s)\ge 1) ds, \quad j \in \J \label{eq:idle_time}
\end{align}
be the cumulative busy time of the server at station $j$. We can then define 
\begin{align}
S_j(q) = \inf \{t > 0 :\ B_j(t) = V_j(q)\}, \quad j \in \J, q \geq 1, \label{eq:service_times}
\end{align} 
to be the time of the $q$th service completion at station $j$. Recalling that only stations in $\sr{E}$ have external arrivals, we see that for every $t \geq 0$ and on every sample path, 
\begin{align}
 & E_i(t) = \max\big\{q\in \Z_+: U_i(q)\le t\big\}, \quad i\in {\cal E},   \label{eq:arrival_process}\\
 & E_i(t) \equiv 0, \quad i\in {\cal J} \setminus {\cal E}, \notag \\
& D_j(t) = \max\big\{q\in \Z_+: S_j(q) \leq t\big\}, \quad j \in \J. \label{eq:departure_process}
\end{align}
Furthermore, one may check that the queue lengths, residual interarrival times and residual service times satisfy
\begin{align}
& L_j(t) = L_j(0) + E_j(t) -D_j(t) + \sum_{k\in {\cal J}} \Phi^{(k)}_j(D_k(t)) , \quad j \in \J, \label{eq:dynamics_queue}\\
& R_{e,i}(t) = U_i({E_i(t)+1})-t, \quad i\in {\cal E}, \notag \\
& R_{e,i}(t) \equiv 0, \quad i\in {\cal J} \setminus {\cal E},\notag  \\
& R_{s,j}(t) = V_j({D_j(t)+1})-B_j(t), \quad j \in \J. \notag 
\end{align}
In the last equation, we have adopted the convention that when station $j$ is empty at time $t$, the remaining service time $R_{s,j}(t)$ is set to be the service time of the next
customer at this station. Clearly, 
\begin{align} \label{eq:determenistic_dynamics}
 &  \frac{\partial }{\partial t}R_{e,i}(t) = -1, \text{ for } i \in {\cal E} \\
 &  \frac{\partial }{\partial t}R_{s,j}(t) = -1(L_j(t)>0), \quad \text{ for } j\in {\cal J}, \notag
\end{align}
where the derivatives at a jump time $t$ are interpreted as the
right derivatives. With the network dynamics rigorously defined, we proceed to derive a BAR for the network. 

\blue{Let $\mathcal{D}$ be the space of all bounded functions $f(\ell, y):\Z_+^d \times \R_+^{2d} \to \R$ defined as follows. For any $i = 1, \ldots , 2d$, fix $(x,(y_k)_{k\neq i}) \in \Z_+^d \times \R_+^{2d-1}$ and view $f(x,y)$ as a one dimensional function in the $y_i$ component. We require that this one dimensional function be continuously differentiable at all but finitely many points, and have bounded derivatives whose bound is independent of the point $(x, (y_k)_{k \neq i})$. For instance, $\cal D$ contains the space of all bounded functions $f:\Z_+^d \times \R_+^{2d} \to \R$, such that for any $\ell \in \Z^d_+$, the function $f(\ell,\cdot): \R^{2d}_+ \to \R$ is  continuously differentiable with a bounded derivative, whose bound is independent of $\ell$. The reason for enforcing the component-wise conditions on the set $\cal D$ is that the test functions we use in this paper always have some form of truncation, which prevents them from being everywhere differentiable in the variable belonging to $\R_+^{2d}$.}
%

Now for any $f\in \mathcal{D}$, and any interval $[t,t+h] \subset \R_+$ free of jumps, we can use the fundamental theorem of calculus together with \eqref{eq:determenistic_dynamics} to see that
\begin{displaymath}
  f(X(t+h))-f(X(t)) = \int_t^{t+h} {\cal A}f(X(s))\,ds,
\end{displaymath}
where 
\begin{align}
  \label{eq:A}
  {\cal A}f(x) = -\sum_{i\in {\cal E}}\frac{ \partial f}{\partial u_i}(x) 
 - & \sum_{j\in {\cal J}}\frac{ \partial f}{\partial v_j}(x)1(\ell_j>0), \\
 & x=(\ell, u, v) \in \Z_+^d \times \R_+^{d} \times \R_+^{d}. \notag
\end{align}
For the remainder of this section, we let $\nu$ denote the stationary distribution of $X$, and let $\Prob_{\nu}$ be the probability measure conditioned on $X(0)$ having the distribution $\nu$. To deal with the jumps of $X$, we introduce some notation.

We know that the jumps of the process $X$ correspond to external arrivals and departures at various stations. We use the term event to refer to a single external arrival or departure. At time instance $s$, there may be simultaneous events. A jump of $X$ at time $s$ constitutes all the events that occur at $s$. Since we assumed that $\E T_{e,i} > 0$ and $\E T_{s,j} > 0$ for all $i \in \sr{E}$ and $j \in \J$, it follows from basic renewal theory (see for instance \cite[Theorem 3.3.1]{Resn1992}) that
\begin{align}
\sum_{i \in \sr{E}} \E_{\nu} \big[E_i(t) - E_i(0)\big] + \sum_{j \in \J} \E_{\nu} \big[D_j(t) - D_j(0)\big] < \infty. \label{eq:finite_intensities}
\end{align}
The finiteness of $\E_{\nu} \big[D_j(t) - D_j(0)\big]$ comes from the fact that $\{D_j(t),\ t \geq 0\}$ is dominated by the renewal process corresponding to the times $\{V_j(q),\ q \geq 1\}$.
Therefore, for every $t > 0$ we know that $\Prob_{\nu}$-almost surely, the process $X$ has finitely many events on the interval $(0,t]$. It follows that $\Prob_{\nu}$-almost surely,
\begin{align}
&\text{the process $X$ has countably many jumps on the interval $(0,\infty)$}, \notag \\
&\text{and every jump instant on this interval has finitely many events.} \label{eq:finite_events_assumption}
\end{align}
In what follows, we deal only with the sample paths where \eqref{eq:finite_events_assumption} holds. Suppose now that $k$ events occur simultaneously at time $s$. We can order them in an arbitrary manner, provided that we do not violate the network dynamics. For example, if station $s$ is empty at time $s-$, and experiences both an arrival and departure at time $s$, then the arrival must happen first.  The particular order assigned to the simultaneous events does not matter, because \eqref{eq:finite_events_assumption} implies there are always finitely many events at a time instance. We can therefore order all of the events that occur on the interval $(0,\infty)$, and represent them as $\delta_1 < \delta_2 < \ldots$, where $\delta_m$ represents the $m$th event to occur after time $0$. We let $T(\delta_m)$ represent the time at which the $m$th event happens.

Now for integer $m \geq 1$, we write $X_{\delta_m}$ to represent the value of the process immediately after the $m$th event has been applied to it. Our use of $X_{\delta_m}$ as opposed to $X(T(\delta_m))$ is intentional. If $\delta_{m_1}$ and $\delta_{m_2}$ represent two simultaneous arrivals to some station, then $X(T(\delta_{m_1})) =  X(T(\delta_{m_2}))$, but $X_{\delta_{m_1}} \neq X_{\delta_{m_2}}$. We also write $X_{\delta_m-}$ to represent the value of the process $X$ right before the $m$th event is applied to it, but after the first $m-1$ events have been applied. 

From \eqref{eq:finite_intensities}, we see that
\begin{align} \label{eq:finite_total_jumps}
& \E_{\nu}\bigg[\sum_{m=1}^{\infty} 1(0 < T(\delta_m) \leq t) \bigg] \\
&\quad = \sum_{i \in \sr{E}} \E_{\nu} \big[E_i(t) - E_i(0)\big] + \sum_{j \in \J} \E_{\nu} \big[D_j(t) - D_j(0)\big] < \infty. \notag
\end{align}
By isolating times when jumps occur, one can verify that for any $t > 0$ and $f \in \mathcal{D}$, 
\begin{align*}
& f(X(t)) = f(X(0)) + \int_0^t {\cal A}f(X(s))ds \\
& \qquad +\sum_{m=1}^{\infty} \big(f(X_{\delta_m})- f(X_{\delta_m-}) \big) 1(0 < T(\delta_m)  \leq t) , \quad  \Prob_{\nu}\text{-almost surely}.  \notag
\end{align*}
Since $f \in \mathcal{D}$ is bounded, we can take the expectation under $\nu$ to see that 
\begin{align*}
\E_{\nu} & \bigg[\int_0^t {\cal A}f(X(s))ds \bigg] \\
& + \E_{\nu}\bigg[\sum_{m=1}^{\infty} \big(f(X_{\delta_m})- f(X_{\delta_m-}) \big) 1(0 < T(\delta_m)  \leq t) \bigg]
= 0.
\end{align*}
Furthermore, we know that ${\cal A}f(x)$ is bounded for all $x \in \Z_+^d \times \R_+^{2d}$, meaning we can apply the Fubini-Tonelli theorem to interchange the expectation with the integral in the first term. Stationarity implies that 
\begin{align*}
\E_\nu\big[{\cal A}f(X(s))\big] = \E_\nu\big[{\cal A}f(X(0))\big], \quad \text{ for all $s \in [0,t]$}.
\end{align*}
 We therefore have the intermediate result
\begin{align*} 
\E_{\nu} \big[ {\cal A}f(X(0))\big]+   \E_{\nu}\bigg[ \frac{1}{t} \sum_{m=1}^{\infty} \big(f(X_{\delta_m})- f(X_{\delta_m-}) \big) 1(0 < T(\delta_m)  \leq t) \bigg] = 0.
\end{align*}
We now use \eqref{eq:finite_total_jumps} and the boundedness of $f \in \mathcal{D}$ to apply the Fubini-Tonelli theorem again and arrive at the BAR for the GJN:
\begin{align} \label{eq:hard_BAR}
 & \E_{\nu} \big[ {\cal A}f(X(0))\big] \\
 & +  \frac{1}{t}\sum_{m=1}^{\infty} \E_{\nu}\Big[\big(f(X_{\delta_m})- f(X_{\delta_m-}) \big) 1(0 < T(\delta_m)  \leq t) \Big] = 0, \quad f \in \mathcal{D}. \notag 
\end{align}
\blue{Before proceeding further, let us pause and discuss the implications of \eqref{eq:hard_BAR}. The equation above encodes information about the distribution $\nu$. To access this information, we choose various test functions $f \in \mathcal{D}$ and plug them into \eqref{eq:hard_BAR}. However, \eqref{eq:hard_BAR} is of limited practical use for most test functions because the jump term is hard to handle analytically. To get around this difficulty, we now describe how to engineer $f \in \mathcal{D}$ so that the jump terms above vanish. Roughly speaking, the idea is to ensure that\begin{displaymath}
  \E [f(X(s))| X(s-)=x] = f(x)
\end{displaymath}
at each jump time $s$ and for each state $x$, i.e.\ the test function remains unchanged in expectation after a jump. In Section~\ref{sect:test}, we describe a family of exponential test functions satisfying the above property, which is also rich enough to asymptotically characterize the distribution of the queue lengths.}  

It may be helpful for the reader to compare the arguments that follow to the proof of Lemma 2.3 in \cite[Appendix A.2]{Miya2017}, where the author also seeks to find test functions for which the jump terms vanish. In that paper, simultaneous events are handled by an approach that is slightly different from ours. 
To make the jump terms vanish, we now analyze the terms $f(X_{\delta_m} )- f(X_{\delta_m-})$. If the event $\delta_m$ corresponds to the $q$th external arrival to station $ i \in \cal E$, then 
\begin{displaymath}
X_{\delta_m} = X_{\delta_m-} + (e^{(i)},\ T_{e,i}(q)e^{(i)},\ 0),
\end{displaymath}
where $(e^{(i)},\ T_{e,i}(q)e^{(i)},\ 0) \in \Z^d_+ \times \R^{d} \times \R^d$. Since $T_{e, i}(q)$ is independent of $X_{\delta_m-}$ and
$U_i(q)$, we have
\begin{align}
\E_{\nu}\Big[\big(f(X_{\delta_m} )- f(X_{\delta_m-}) \big) 1(0 < U_i(q) \leq t) \Big] = 0 \label{eq:jump1_zero}
\end{align}
if
\begin{equation}
  \label{eq:cond1}
   \E \Big[f(\ell+e^{i}, u+ T_{e,i}(q)e^{(i)}, v)\Big] = f(\ell, u, v), \quad i \in \cal E,
\end{equation}
for every feasible state $(\ell, u, v)\in \Z_+^d \times \R_+^{2d}$ with $u_i=0$. Similarly, if $\delta_m$ corresponds to the $q$th departure from station $ j \in \J$, then
\begin{displaymath}
X_{\delta_m} = X_{\delta_m-} + (-e^{(j)}+ \phi^{(j)}(q),\ 0 ,\ T_{s,j}(q)e^{(j)}).
\end{displaymath}
Since $T_{s,j}(q)$ and $\phi^{(j)}(q)$ are independent of $X_{\delta_m-}$ and
$S_j(q)$, we have
\begin{align}
\E_{\nu}\Big[\big(f(X_{\delta_m} )- f(X_{\delta_m-}) \big) 1(0 < S_j(q) \leq t) \Big] = 0 \label{eq:jump2_zero}
\end{align}
if
\begin{equation}
  \label{eq:cond2}
   \E \Big[f(\ell-e^{j}+\phi^{(j)}(q), u, v+  T_{s,j}(q)e^{(j)})\Big] = f(\ell, u, v), \quad j \in \J,
\end{equation}
for every feasible state $(\ell, u, v)\in \Z_+^d \times \R_+^{2d}$ with $\ell_j>0$ and
$v_j=0$.

We summarize our analysis in the following lemma.
\begin{lemma}\label{lem:MCbar}
Assume that $X$ is positive Harris recurrent and 
$X(0)$ follows the stationary distribution of $X$.
 For any function $f\in \mathcal{D}$
satisfying conditions (\ref{eq:cond1}) and (\ref{eq:cond2}), the basic adjoint relationship \eqref{eq:hard_BAR} reduces to
\begin{align}
  \label{eq:MCbar}
 & \sum_{i\in {\cal E}}   \E\Big[ \frac{ \partial f}{\partial u_i}(X(0))\Big]
  +\sum_{j\in {\cal J}}\E\Big[\frac{ \partial f}{\partial v_j}(X(0)) \Big]\\
  &\qquad -\sum_{j\in {\cal J}}\E\Big[\frac{ \partial f}{\partial v_j}(X(0))1(L_j(0)=0) \Big] = 0. \notag
\end{align}
\end{lemma}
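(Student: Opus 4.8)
The plan is to start from the general BAR \eqref{eq:hard_BAR} and simply substitute a test function $f \in \mathcal{D}$ that satisfies the two ``annihilation'' conditions \eqref{eq:cond1} and \eqref{eq:cond2}, and then show that the entire jump term collapses to zero, leaving only the expectation of $\mathcal{A}f(X(0))$. The identification of $\E_\nu[\mathcal{A}f(X(0))]$ with the left-hand side of \eqref{eq:MCbar} is then immediate from the definition of $\mathcal{A}$ in \eqref{eq:A}, since $\mathcal{A}f(x) = -\sum_{i\in\mathcal{E}} \partial f/\partial u_i(x) - \sum_{j\in\mathcal{J}} (\partial f/\partial v_j)(x) 1(\ell_j>0)$, and writing $1(\ell_j>0) = 1 - 1(\ell_j = 0)$ gives exactly the three sums in \eqref{eq:MCbar} up to an overall sign.

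The substance of the argument is therefore the vanishing of the jump term $\frac{1}{t}\sum_{m=1}^\infty \E_\nu[(f(X_{\delta_m}) - f(X_{\delta_m-}))1(0 < T(\delta_m) \le t)]$. First I would classify each event $\delta_m$ as either an external arrival at some station $i \in \mathcal{E}$ or a departure from some station $j \in \mathcal{J}$; this is exhaustive by construction of the event enumeration. For an arrival event corresponding to the $q$th external arrival at station $i$, the state transition is $X_{\delta_m} = X_{\delta_m-} + (e^{(i)}, T_{e,i}(q)e^{(i)}, 0)$, and the event time is $T(\delta_m) = U_i(q)$. The crucial observations are that $X_{\delta_m-}$ has $u_i$-coordinate equal to zero (the residual interarrival clock has just expired), that $U_i(q)$ is $\sigma(X_{\delta_m-})$-measurable, and that $T_{e,i}(q)$ is independent of the pair $(X_{\delta_m-}, U_i(q))$. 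Conditioning on $X_{\delta_m-}$ and using \eqref{eq:cond1} shows the summand vanishes term-by-term; this is exactly \eqref{eq:jump1_zero}. The analogous computation for a departure event, using the transition $X_{\delta_m} = X_{\delta_m-} + (-e^{(j)} + \phi^{(j)}(q), 0, T_{s,j}(q)e^{(j)})$, the fact that on such an event $\ell_j > 0$ and $v_j = 0$ at $X_{\delta_m-}$, the measurability of $S_j(q)$ with respect to $X_{\delta_m-}$, and the independence of $(\phi^{(j)}(q), T_{s,j}(q))$ from $(X_{\delta_m-}, S_j(q))$, gives \eqref{eq:jump2_zero} via condition \eqref{eq:cond2}. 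Summing over all $m$ and using \eqref{eq:finite_total_jumps} to justify that the interchange of expectation and infinite summation is legitimate (each term is zero, so absolute summability is trivial, but one does need \eqref{eq:finite_total_jumps} to know the tail of the sum is under control when passing from the pathwise identity to the expectation form — this was already handled in deriving \eqref{eq:hard_BAR}), the jump term is identically zero.

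The main obstacle — really the only place requiring care — is making the independence-and-measurability argument airtight in the presence of \emph{simultaneous} events. After the enumeration $\delta_1 < \delta_2 < \cdots$, the object $X_{\delta_m-}$ is not $X(T(\delta_m)-)$ but rather the state after the first $m-1$ events have been applied; I need that $T_{e,i}(q)$ (resp.\ $\phi^{(j)}(q), T_{s,j}(q)$) is still independent of this possibly-modified pre-event state. This holds because the interarrival, service, and routing sequences are mutually independent i.i.d.\ sequences that are independent across stations, so the ``fresh'' randomness attached to the $m$th event has not been consumed by any of the earlier $m-1$ events; the excerpt has already flagged this (``Since $T_{e,i}(q)$ is independent of $X_{\delta_m-}$ and $U_i(q)$''), so in the write-up I would simply invoke these independence assumptions explicitly and cite \eqref{eq:jump1_zero}–\eqref{eq:jump2_zero} as the per-event conclusions. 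With that, the lemma follows by collecting the surviving term $\E_\nu[\mathcal{A}f(X(0))] = 0$ and rewriting $\mathcal{A}f$ as above.
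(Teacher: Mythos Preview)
Your proposal is correct and follows essentially the same route as the paper: the paper's proof is the one-line ``Apply \eqref{eq:jump1_zero}, \eqref{eq:jump2_zero}, and the definition of $\mathcal{A}$ in \eqref{eq:A} to \eqref{eq:hard_BAR}'', and you have simply unpacked those ingredients in detail, including the $1(\ell_j>0)=1-1(\ell_j=0)$ rewrite.

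One small technical slip worth tightening: your claim that $U_i(q)$ (resp.\ $S_j(q)$) is $\sigma(X_{\delta_m-})$-measurable is not literally true, since $X_{\delta_m-}$ is a single state vector and does not encode the full history needed to reconstruct the event time. What you actually need, and what the paper invokes, is that $T_{e,i}(q)$ is independent of the \emph{pair} $(X_{\delta_m-},U_i(q))$ (and analogously $(T_{s,j}(q),\phi^{(j)}(q))$ is independent of $(X_{\delta_m-},S_j(q))$); this follows because both $X_{\delta_m-}$ and the event time are measurable with respect to the history $\sigma$-field generated by the primitives consumed so far, while the fresh interarrival/service/routing variable is independent of that $\sigma$-field. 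Replace the measurability assertion with this independence statement and the argument is clean.
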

\begin{proof}
Apply \eqref{eq:jump1_zero}, \eqref{eq:jump2_zero}, and the definition of $\cal{A}$ in \eqref{eq:A} to \eqref{eq:hard_BAR}.
\end{proof}

\begin{remark} \blue{The idea of engineering a test function to kill the jump terms in \eqref{eq:hard_BAR}  first appeared in the work of \citet{Miya2015} for a single server queue, which is further studied for a multiserver queue with heterogeneous servers in \cite{Miya2017}.  The novel feature of the present paper is the careful consideration of tightness in Section~\ref{sect:tightness}. This issue was not present in \cite{Miya2015,Miya2017} because those papers deal with a single queue as opposed to a queueing network.} 

\end{remark}
In the next section, we describe a useful family of exponential test functions that satisfies (\ref{eq:cond1}) and (\ref{eq:cond2}).

\subsection{Exponential Test Functions for a Sequence of GJNs}
\label{sect:test}
We consider the sequence of Markov processes $\{X^{(n)} ,\ n \geq 1\}$ from Section~\ref{sect:sequence}. Recall our goal, which is to show that the MGF of $Z^{(n)}(\infty) = r_nL^{(n)}(\infty)$ asymptotically satisfies the BAR \eqref{eq:srbmbar} of the approximating SRBM. As a first step, we have already derived a BAR for $X^{(n)}$. We now describe a family of exponential test functions that satisfy conditions (\ref{eq:cond1}) and (\ref{eq:cond2}).

For all $n \in \dd{Z}_{+}$, let us first define the truncation function $g^{(n)} : \dd{R} \to\dd{R}$ as
\begin{align*}
  g^{(n)}(y) = \min(y,1/{r}_{n}), \quad y \in \R.
\end{align*}
Now for each $\theta \in \dd{R}^{d}$, $\eta \in \R^d$ and $\zeta \in \R^d$, define $f_{\theta, \eta, \zeta}^{(n)} : \Z_+^d \times \R_+^{2d} \to \dd{R}_{+}$ as
\begin{align}
 \label{eq:testf}
 f_{\theta, \eta, \zeta}^{(n)}(x) & =  e^{\br{{\theta}, {\ell}}+ \sum_{i \in \sr{E}} \eta_i g^{(n)} (u_i)  + \sum_{j \in \J} \zeta_j g^{(n)} (v_j)}, \\
& \quad \text{ for } x = (\ell,u,v) \in \Z_+^d \times \R_+^{2d}. \nonumber
\end{align}

It is not hard to verify that $f_{\theta, \eta, \zeta}^{(n)} \in \mathcal{D}$. \blue{Our in \eqref{eq:testf} is meant to resemble a moment generating function.} Indeed, if $\eta$ and  $\zeta$ were allowed to be independent of $\theta$ and if we chose $g^{(n)}(y)=y$, then this family of test functions 
 would characterize the stationary distribution of $X^{(n)}$ via its BAR \eqref{eq:hard_BAR}. However, applying \eqref{eq:hard_BAR} to $f_{\theta, \eta, \zeta}^{(n)}$ is of little practical use, because the jump terms become too complicated to work with. 
 
Instead, we want to choose $f_{\theta, \eta, \zeta}^{(n)}$ to satisfy the conditions of Lemma~\ref{lem:MCbar}, so that we can use \eqref{eq:MCbar} instead. To do so, we must choose $\eta$ and $\zeta$ as functions of $\theta$ (i.e.\ $\eta =\eta^{(n)}(\theta)$ and $\zeta = \zeta^{(n)}(\theta)$), significantly reducing the size of this family of functions.
\blue{Following the logic behind \eqref{eq:cond1} and \eqref{eq:cond2}}, we choose $\eta^{(n)}(\theta)$ and $\zeta^{(n)}(\theta)$ to satisfy
\begin{align}
\label{eq:eta 1}
 & e^{\theta_{i}} \dd{E}\big( e^{\eta_{i}^{(n)}(\theta_{i}) g^{(n)}(T_{e,i}^{(n)})} \big) = 1, \qquad i \in \sr{E},\\
\label{eq:zeta 1}
 & t_{j}(\theta) \dd{E}\big( e^{\zeta_{j}^{(n)}(\theta) g^{(n)}(T_{s,j}^{(n)})} \big) = 1,  \qquad j \in \J,
\end{align}
where
\begin{align*}
   t_{j}(\theta) = e^{-\theta_{j}} \Big( \sum_{k \in \sr{J}} p_{jk} e^{\theta_{k}} + p_{j0} \Big). 
\end{align*}
We have rewritten $\eta_{i}^{(n)}(\theta)$ as $\eta_{i}^{(n)}(\theta_{i})$ because it is independent of $\theta_{k}$ for $k \ne i$. For the remainder of the paper, we write
\begin{align*}
f_{\theta}^{(n)}(x) = f_{\theta, \eta^{(n)}(\theta), \zeta^{(n)}(\theta)}^{(n)}(x).
\end{align*} 
This reduced family of test functions can only characterize the distribution of $Z^{(n)}(\infty)$ asymptotically as $n \to \infty$, which is enough for our purposes. The following lemma is similar to Lemma 2.3 of \cite{Miya2017}, and says that $f_{\theta}^{(n)}$ satisfies the conditions of Lemma~\ref{lem:MCbar}.
\begin{lemma}
\label{lem:terminal condition 1}
\blue{There exists $M> 0$ such that for all $\theta \in \R^{d}$ with $\| \theta \| \leq M$,} the solutions  $\eta_{i}^{(n)}(\theta_{i})$ and $\zeta_{j}^{(n)}(\theta)$ to  \eq{eta 1}--\eq{zeta 1} are well defined and finite for all $n \geq 1$,  $i \in \sr{E}$ and $j \in \sr{J}$. Furthermore, conditions (\ref{eq:cond1}) and (\ref{eq:cond2}) are satisfied for $f = f_{\theta}^{(n)}$. 
\end{lemma}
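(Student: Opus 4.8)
The plan is to verify the two assertions of Lemma~\ref{lem:terminal condition 1} separately: first, that for $\|\theta\|$ small the defining equations \eqref{eq:eta 1} and \eqref{eq:zeta 1} admit well-defined finite solutions $\eta^{(n)}_i(\theta_i)$ and $\zeta^{(n)}_j(\theta)$, uniformly in $n$; and second, that with these choices the test function $f^{(n)}_\theta$ satisfies the annihilation conditions \eqref{eq:cond1} and \eqref{eq:cond2}. The second part is essentially bookkeeping once the first is in place, so the real content is the first part.

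For existence and finiteness, I would argue as follows. Fix $i \in \sr{E}$ and consider the map $\eta \mapsto h^{(n)}_i(\eta) := \E[e^{\eta g^{(n)}(T^{(n)}_{e,i})}]$. Because $g^{(n)}$ is bounded above by $1/r_n$, the random variable $g^{(n)}(T^{(n)}_{e,i})$ is bounded above, so $h^{(n)}_i(\eta)$ is finite for every $\eta \le 0$; in fact it is finite, smooth, strictly convex and strictly increasing on an interval containing $(-\infty, 0]$ (and a bit beyond, since $g^{(n)} \ge 0$ forces finiteness for all $\eta$ when the truncation is active, but I only need a neighborhood). We have $h^{(n)}_i(0) = 1$ and $\frac{d}{d\eta} h^{(n)}_i(0) = \E[g^{(n)}(T^{(n)}_{e,i})] > 0$. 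So $h^{(n)}_i$ is a strictly increasing bijection from a neighborhood of $0$ onto a neighborhood of $1$, and \eqref{eq:eta 1} asks for $\eta^{(n)}_i(\theta_i) = (h^{(n)}_i)^{-1}(e^{-\theta_i})$. Since $e^{-\theta_i} \to 1$ as $\theta_i \to 0$, a solution exists for $|\theta_i|$ small. The delicate point is \emph{uniformity in $n$}: I need the neighborhoods on which $(h^{(n)}_i)^{-1}$ is defined to contain a fixed ball around $1$, independent of $n$. Here I would use the moment assumptions \eqref{eq:momentarr}, \eqref{eq:uniarr}: uniform integrability of $\{(T^{(n)}_{e,i})^2\}$ together with $\sigma^{(n)}_{e,i} \to \sigma_{e,i}$ and $\lambda^{(n)}_{e,i} \to \lambda_{e,i} > 0$ gives uniform control on $\E[g^{(n)}(T^{(n)}_{e,i})] = \E[\min(T^{(n)}_{e,i}, 1/r_n)]$, which converges to $\E[T_{e,i}] = 1/\lambda_{e,i}$, hence is bounded away from $0$ and $\infty$ for $n$ large, with the finitely many remaining $n$ handled individually. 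A similar uniform bound on $\E[g^{(n)}(T^{(n)}_{e,i})^2]$ gives equicontinuity of the derivatives near $0$, and then an application of the (quantitative) inverse function theorem, or just a direct monotonicity-plus-bounds argument, yields a single $M > 0$ and a uniform-in-$n$ bound $|\eta^{(n)}_i(\theta_i)| \le C\|\theta\|$ for $\|\theta\| \le M$. The treatment of \eqref{eq:zeta 1} is identical, replacing $e^{-\theta_i}$ by $t_j(\theta)$ and noting $t_j(0) = p_{j0} + \sum_k p_{jk} = 1$ and $t_j$ is continuous, so $t_j(\theta)$ lies in the required neighborhood of $1$ when $\|\theta\|$ is small; one also uses $\E[g^{(n)}(T^{(n)}_{s,j})] \to 1/\lambda_{s,j} > 0$.

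Once the coefficients are in hand, verifying \eqref{eq:cond1} and \eqref{eq:cond2} is a direct computation. By the product structure of $f^{(n)}_{\theta,\eta,\zeta}$ in \eqref{eq:testf}, for a state $(\ell,u,v)$ with $u_i = 0$ we have, writing $f = f^{(n)}_\theta$,
\begin{align*}
\E\big[f(\ell + e^{(i)}, u + T^{(n)}_{e,i}e^{(i)}, v)\big] = f(\ell,u,v)\, e^{\theta_i}\, \E\big[e^{\eta^{(n)}_i(\theta_i) g^{(n)}(T^{(n)}_{e,i})}\big],
\end{align*}
using $g^{(n)}(0 + T^{(n)}_{e,i}) = g^{(n)}(T^{(n)}_{e,i})$ and independence of $T^{(n)}_{e,i}$ from $(\ell,u,v)$; by \eqref{eq:eta 1} the right-hand factor is $1$, which is exactly \eqref{eq:cond1}. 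For \eqref{eq:cond2}, with $\ell_j > 0$ and $v_j = 0$, the contribution $\ell \mapsto \ell - e^{(j)} + \phi^{(j)}(q)$ produces the factor $e^{-\theta_j}\E[e^{\br{\theta, \phi^{(j)}(q)}}] = e^{-\theta_j}(\sum_k p_{jk}e^{\theta_k} + p_{j0}) = t_j(\theta)$, while the residual service time contributes $\E[e^{\zeta^{(n)}_j(\theta) g^{(n)}(T^{(n)}_{s,j})}]$, and the $u$-coordinate is untouched; multiplying, the two factors give $t_j(\theta)\,\E[e^{\zeta^{(n)}_j(\theta) g^{(n)}(T^{(n)}_{s,j})}] = 1$ by \eqref{eq:zeta 1}, which is \eqref{eq:cond2}. (One should note $\ell_j \ge 1$ guarantees $\ell - e^{(j)} + \phi^{(j)}(q) \in \Z^d_+$, so the states appearing are feasible.)

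I expect the main obstacle to be the uniformity in $n$ of the domain on which \eqref{eq:eta 1}--\eqref{eq:zeta 1} are solvable. Existence for each fixed $n$ and each small $\theta$ is elementary convex analysis; the work is in showing a \emph{single} $M$ works for all $n$, and this is precisely where the uniform integrability hypotheses \eqref{eq:uniarr}--\eqref{eq:uniser} and the convergence \eqref{eq:momentarr}--\eqref{eq:momentser} of means and variances are needed — to pin down $\E[g^{(n)}(T^{(n)}_{e,i})]$ and $\E[g^{(n)}(T^{(n)}_{s,j})]$ (and their second-moment analogues) away from degeneracy, uniformly in $n$, so that the local inverses $(h^{(n)}_i)^{-1}$ have a common domain of definition. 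Everything else, including the eventual linear-in-$\|\theta\|$ bound on the coefficients that will be used later for Taylor expansion, falls out of the same estimates.
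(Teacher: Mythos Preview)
Your proposal is correct, and the overall strategy---invert the moment generating function of the truncated interarrival/service time, then check \eqref{eq:cond1}--\eqref{eq:cond2} by direct computation---is the same as the paper's. The difference is in how the uniformity-in-$n$ of the admissible neighborhood is obtained.

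You argue locally: the derivative $(h^{(n)}_i)'(0) = \E[g^{(n)}(T^{(n)}_{e,i})]$ is bounded away from $0$ uniformly in $n$ by the moment assumptions, and then a quantitative inverse function theorem (or equicontinuity of derivatives via second-moment control) furnishes a common neighborhood. This works, but it is heavier than necessary. The paper instead exploits the global structure: since $g^{(n)}(T^{(n)}_{e,i})$ is bounded, $h^{(n)}_i$ is finite and strictly increasing on all of $\R$, and one computes its range explicitly as $\big(\Prob(T^{(n)}_{e,i}=0),\,\infty\big)$. Thus $(h^{(n)}_i)^{-1}(e^{-\theta_i})$ is defined for every $\theta_i < -\log \Prob(T^{(n)}_{e,i}=0)$, and the only uniformity needed is $\sup_n \Prob(T^{(n)}_{e,i}=0) < 1$, which follows immediately from $\lambda^{(n)}_{e,i}\to\lambda_{e,i}\in(0,\infty)$. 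The same device handles $\zeta^{(n)}_j$ once one notes $\log t_j(\theta)$ is continuous with $t_j(0)=1$. Your route buys nothing extra here and costs more bookkeeping; the paper's identification of the exact range is the cleaner move. Your verification of \eqref{eq:cond1}--\eqref{eq:cond2} matches the paper's, which simply declares it trivial given \eqref{eq:eta 1}--\eqref{eq:zeta 1}.
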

\begin{proof}
Once we assume that $\eta_{i}^{(n)}(\theta_{i})$ and $\zeta_{j}^{(n)}(\theta)$ are well defined and finite, the second claim of the lemma becomes trivial to verify by \eqref{eq:eta 1} and \eqref{eq:zeta 1}. We now check that these functions are well defined for all $\theta \in \dd{R}^{d}$ with $\|\theta\| \leq M$. The argument to show that $\eta_{i}^{(n)}(\theta_{i})$ is well defined is simple, and is repeated here from Section 2.3 of \cite{Miya2017}. For $i \in \sr{E}$, let
\begin{align*}
  {G}_{e,i}^{(n)}(y) = \dd{E}\big( e^{y g^{(n)}(T_{e,i}^{(n)})} \big), \qquad y \in \dd{R},
\end{align*}
be the moment generating function of
$g^{(n)}(T_{e,i}^{(n)})$. Then ${G}_{e,i}^{(n)}(y)$ exists for
all $y \in \dd{R}$ because $g^{(n)}(T_{e,i}^{(n)})$ is a bounded random variable for every $n$. Furthermore, the inverse $({G}_{e,i}^{(n)})^{-1}$ exists as ${G}_{e,i}^{(n)}(y)$ is strictly increasing. \blue{Observe however that this inverse is only defined on the interval
\begin{align*}
\Big(\lim_{y \to -\infty} {G}_{e,i}^{(n)}(y), \lim_{y \to \infty} {G}_{e,i}^{(n)}(y) \Big) = \big(\Prob(T_{e,i}^{(n)} = 0) , \infty \big).
\end{align*}
Hence, \eq{eta 1} yields
\begin{align*}
  \eta_{i}^{(n)}(\theta_{i}) = ({G}_{e,i}^{(n)})^{-1}(e^{-\theta_{i}}), \qquad \theta_{i} \in \big(-\infty, -\log(\Prob(T_{e,i}^{(n)} = 0))\big),
\end{align*}
with the convention that $-\log(0) = \infty$. We know that there exists some  $M_{e,i} > 0$ such that 
\begin{align*}
0 < M_{e,i} < -\log(\Prob(T_{e,i}^{(n)} = 0)) , \quad n \geq 1
\end{align*}
because $\Prob(T_{e,i}^{(n)} = 0) \neq 1$ for any $n\geq 1$, and $\lim_{n \to \infty} \Prob(T_{e,i}^{(n)} = 0) \neq 1$. The former is true because $1/\E(T_{e,i}^{(n)}) = \lambda_{e,i}^{(n)} < \infty$ for each $n$, and the latter follows from \eqref{eq:momentarr} and the fact that $\lambda_{e,i} < \infty$ there. Hence, $\eta_{i}^{(n)}(\theta_{i})$ is well defined and finite for all $\theta_i \in (-\infty, M_{e,i})$.} We now derive  a similar expression  for $\zeta_j^{(n)}(\theta)$.  For $ j \in \sr{J}$ and $y \in \R$, define
\begin{align*}
 {G}_{s,j}^{(n)}(y) = \dd{E}\big( e^{y g^{(n)}(T_{s,j}^{(n)})} \big),
\end{align*} 
\blue{and observe that $({G}_{s,j}^{(n)})^{-1}$ exists because ${G}_{s,j}^{(n)}(y)$ is an increasing function. Define $\chi_{j}^{(n)}(y)$ as
\begin{align} \label{eq:chi 1}
  \chi_{j}^{(n)}(y) = ({G}_{s,j}^{(n)})^{-1}(e^{-y}), \qquad y\in \big(-\infty, -\log(\Prob(T_{s,j}^{(n)} = 0))\big),
\end{align}
and let $M_{s,j} > 0$ be such that 
\begin{align*}
 0 < M_{s,j} < -\log(\Prob(T_{s,j}^{(n)} = 0), \quad n \geq 1. 
\end{align*}
We conclude that $\zeta_{j}^{(n)}(\theta)$ is well defined and satisfies
\begin{align}
\label{eq:zeta 2}
  &\zeta_{j}^{(n)}(\theta) = \chi_{j}^{(n)}(\log t_{j}(\theta))
=({G}_{s,j}^{(n)})^{-1}\Big(\frac{1}{t_j(\theta)}\Big), \\
 & \qquad \text{ for } \theta \text{ such that }\log t_{j}(\theta) < M_{s,j},\notag
\end{align}
where $t_j(\theta)$ is defined in \eqref{eq:zeta 1}.}
\end{proof}
We now present the BAR for this special family of exponential test functions.
\begin{lemma}
\label{lem:SE 1}
Assume that $X^{(n)}$ is positive Harris recurrent, and let 
\begin{align*}
X^{(n)}(\infty) = (L^{(n)}(\infty), R_e^{(n)}(\infty), R_s^{(n)}(\infty))
\end{align*} have the stationary distribution of $X^{(n)}$. Then
\begin{align}
\label{eq:SE 1}
  & \sum_{i \in {\cal E}} \eta_{i}^{(n)}(\theta_{i}) \dd{E}\big[1(R^{(n)}_{e,i}(\infty) < 1/{r}_{n}) f_{\theta}^{(n)}(X^{(n)}(\infty))\big]  \\
 & \quad +  \sum_{j \in \J}  \zeta_{j}^{(n)}(\theta) \dd{E}\big[1(R^{(n)}_{s,j}(\infty) < 1/{r}_{n}) f_{\theta}^{(n)}(X^{(n)}(\infty))\big] \notag \\
  & \quad - \sum_{j \in \J} \zeta_{j}^{(n)}(\theta) \dd{E}\big[1(R^{(n)}_{s,j}(\infty) < 1/{r}_{n}, L_{j}^{(n)}(\infty)=0) f_{\theta}^{(n)}(X^{(n)}(\infty))\big] \notag\\
  & = 0 \notag
\end{align}
\blue{for all $\|\theta\| \leq M$, where $M > 0$ is as in Lemma~\ref{lem:terminal condition 1}.}
\end{lemma}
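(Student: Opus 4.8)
The plan is to read off \eqref{eq:SE 1} as a direct specialization of the reduced BAR of Lemma~\ref{lem:MCbar} to the exponential test function $f=f_\theta^{(n)}$. I would fix $n$ and $\theta$ with $\|\theta\|\le M$, and first invoke Lemma~\ref{lem:terminal condition 1} to guarantee that $\eta^{(n)}(\theta)$ and $\zeta^{(n)}(\theta)$ are well defined and finite and that $f_\theta^{(n)}$ satisfies conditions \eqref{eq:cond1} and \eqref{eq:cond2}; combined with the fact (noted just after \eqref{eq:testf}) that $f_\theta^{(n)}\in\mathcal{D}$, this makes $f_\theta^{(n)}$ an admissible test function for Lemma~\ref{lem:MCbar}. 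Since $X^{(n)}$ is positive Harris recurrent and $X^{(n)}(0)$ is distributed as $X^{(n)}(\infty)$, Lemma~\ref{lem:MCbar} applied to $X=X^{(n)}$ and $f=f_\theta^{(n)}$ then yields \eqref{eq:MCbar}.

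It remains to evaluate the three derivative expectations in \eqref{eq:MCbar} for $f=f_\theta^{(n)}$. Writing $x=(\ell,u,v)$ and using $(g^{(n)})'(y)=1(y<1/r_n)$ for $y\ne 1/r_n$, one computes
\[
  \frac{\partial f_\theta^{(n)}}{\partial u_i}(x)=\eta_i^{(n)}(\theta_i)\,1(u_i<1/r_n)\,f_\theta^{(n)}(x),\qquad
  \frac{\partial f_\theta^{(n)}}{\partial v_j}(x)=\zeta_j^{(n)}(\theta)\,1(v_j<1/r_n)\,f_\theta^{(n)}(x).
\]
Substituting $x=X^{(n)}(\infty)=(L^{(n)}(\infty),R_e^{(n)}(\infty),R_s^{(n)}(\infty))$ into \eqref{eq:MCbar}, the first sum produces the $\eta$-terms of \eqref{eq:SE 1}, the second sum the $\zeta$-terms, and the third sum (which carries the extra factor $1(L_j^{(n)}(\infty)=0)$) the final group of terms; thus \eqref{eq:MCbar} is exactly \eqref{eq:SE 1}.

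The only delicate point is the kink of $g^{(n)}$ at $1/r_n$, where $f_\theta^{(n)}$ fails to be differentiable in $u_i$ (resp.\ $v_j$) --- indeed this is precisely why $f_\theta^{(n)}$ lies in $\mathcal{D}$ rather than in a class of everywhere-differentiable functions. I would dispose of it with the right-derivative convention already used in \eqref{eq:determenistic_dynamics}: the right derivative of $g^{(n)}$ at $1/r_n$ is $0$, which is consistent with the strict inequalities $1(\cdot<1/r_n)$ above, and with the fact that the residual-time coordinates pass through any fixed level in zero Lebesgue time (and on $\{L_j^{(n)}=0\}$ the coordinate $R_{s,j}^{(n)}$ is frozen, so the corresponding term in $\mathcal{A}f$ vanishes there regardless of the value assigned at the kink). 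Beyond this routine bookkeeping I expect no obstacle, since all the substantive work --- choosing $\eta^{(n)}(\theta)$ and $\zeta^{(n)}(\theta)$ so that the jump terms of the BAR disappear --- has already been carried out in Lemmas~\ref{lem:MCbar} and \ref{lem:terminal condition 1}.
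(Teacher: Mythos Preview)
Your proposal is correct and follows essentially the same approach as the paper: invoke Lemma~\ref{lem:terminal condition 1} to ensure $f_\theta^{(n)}$ satisfies \eqref{eq:cond1}--\eqref{eq:cond2}, apply Lemma~\ref{lem:MCbar}, and compute the partial derivatives of $f_\theta^{(n)}$ explicitly. The paper's proof is even terser---it simply states the right-side partial derivatives and cites the two lemmas---so your extra discussion of the kink at $1/r_n$ is additional care rather than a different argument.
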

\begin{proof}
This lemma follows immediately from applying Lemmas~\ref{lem:MCbar} and
  \lemt{terminal condition 1} to the test function $f_{\theta}^{(n)}(x)$ from \eqref{eq:testf}, because its right side partial derivatives are
\begin{align*}
 \frac {\partial}{\partial u_i} f_{\theta}^{(n)}(x) = &\  \eta_{i}^{(n)}(\theta_{i}) 1(u_i < 1/{r}_{n}) f_{\theta}^{(n)}(x), \quad i \in \sr{E},\\
\frac {\partial}{\partial v_j} f_{\theta}^{(n)}(x) = &\ \zeta_{j}^{(n)}(\theta) 1(v_j < 1/{r}_{n}) f_{\theta}^{(n)}(x), \quad j \in \J.
\end{align*}
\end{proof}
In order for Lemma~\ref{lem:SE 1} to be of practical use, we need to know the behavior of $\eta_{i}^{(n)}(\theta_{i})$ and $\zeta_{j}^{(n)}(\theta)$; however, these functions are defined implicitly. In the next section, we use quadratic approximations of $\eta_{i}^{(n)}(\theta_{i})$ and $\zeta_{j}^{(n)}(\theta)$ to convert \eqref{eq:SE 1} into a more convenient expression that resembles \eqref{eq:srbmbar}.
\section{Approximate BAR} \label{SECT:PRELIMIT}
\setnewcounter
This section is devoted to proving Proposition~\ref{lem:prelimitMGFBAR}, which we now state. 

\begin{proposition} \label{lem:prelimitMGFBAR}
Assume that all conditions stated in Theorem~\ref{thr:heavy traffic 1} are satisfied. Recall the definitions of $\gamma(\theta)$ and $\gamma_{j}(\theta)$ from \eq{gamma 1}. For $ j \in \J$ and $\theta \leq 0$, define
\begin{align}\label{eq:laplaceprelimit}
 & \varphi^{(n)}(\theta) = \E [e^{\langle\theta, Z^{(n)}(\infty)\rangle}] ,\\
 & \varphi_{j}^{(n)}(\theta) = \E [e^{\langle\theta, Z^{(n)}(\infty)\rangle} | Z_{j}^{(n)}(\infty)=0]. \notag
\end{align}
Let 
\begin{align}
\epsilon^{(n)}(\theta) = \gamma(\theta) \varphi^{(n)}(\theta)  + \sum_{j \in \J} b_{j} \gamma_j(\theta) \varphi_{j}^{(n)}(\theta) \quad n\ge 1,\ \theta \leq 0, \label{eq:prelimBAR} 
\end{align}
then 
\begin{align*}
\lim_{n \to \infty} \sup \limits_{\substack{\theta < 0 \\ 0 < \|\theta\| \leq M}} \frac{\abs{\epsilon^{(n)}(\theta)}}{\|\theta\|} =0,
\end{align*}
\blue{where $M > 0$ is as in Lemma~\ref{lem:terminal condition 1}.}
\end{proposition}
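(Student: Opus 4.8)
The goal is to show that the prelimit MGFs $\varphi^{(n)}$ and $\varphi_j^{(n)}$ asymptotically satisfy the SRBM's MGF-BAR \eqref{eq:srbmbar}, uniformly in $\theta$ near the origin. The starting point is the jump-free BAR \eqref{eq:SE 1} for the exponential test functions $f_\theta^{(n)}$. I would first rewrite \eqref{eq:SE 1} in terms of the quantities that appear in \eqref{eq:prelimBAR}. Observe that $\E[f_\theta^{(n)}(X^{(n)}(\infty))]$ is an MGF of the \emph{joint} stationary law of $(L^{(n)}(\infty), R_e^{(n)}(\infty), R_s^{(n)}(\infty))$; because of the truncation $g^{(n)}$ and the indicators $1(R^{(n)}_{e,i}(\infty) < 1/r_n)$, etc., this is not literally $\varphi^{(n)}(r_n^{-1}\theta)$ (note the rescaling: $Z^{(n)}(\infty) = r_n L^{(n)}(\infty)$, so $\br{\theta, L^{(n)}(\infty)} = \br{\theta/r_n, Z^{(n)}(\infty)}$), but I expect that after substituting $\theta \mapsto r_n \theta$ and dividing through by an appropriate normalization, each expectation in \eqref{eq:SE 1} can be written as a prelimit MGF plus an error term that vanishes. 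The residual-time components contribute factors $\E[e^{\eta_i^{(n)} g^{(n)}(R_{e,i}^{(n)}(\infty))}]$-type terms which, by the renewal/stationarity structure and the normalization equations \eqref{eq:eta 1}--\eqref{eq:zeta 1}, I expect to be close to $1$; quantifying "close to $1$'' is where the uniform integrability assumptions \eqref{eq:uniarr}--\eqref{eq:uniser} and the moment convergence \eqref{eq:momentarr}--\eqref{eq:momentser} enter.

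**Key steps.** (1) Apply Lemma~\ref{lem:SE 1} with $\theta$ replaced by $r_n\theta$ (valid since $\|r_n\theta\|\le M$ for $n$ large, as $r_n\to 0$), and collect the three terms. (2) Use the quadratic approximations of $\eta_i^{(n)}(\cdot)$ and $\zeta_j^{(n)}(\cdot)$ — these are the approximations referred to in the excerpt as Lemma~\ref{lem:concave 1} (the "Lemma~\ref{lem:concave 1}'' / "Lemma~\ref{lem:uniform 1}'' mentioned in the introduction) — to write, for $\|\theta\|$ small,
\begin{align*}
\frac{1}{r_n}\eta_i^{(n)}(r_n\theta_i) &= -\lambda_{e,i}\theta_i + \tfrac{1}{2}\lambda_{e,i}^3\sigma_{e,i}^2\theta_i^2 + o(1), \\
\frac{1}{r_n}\zeta_j^{(n)}(r_n\theta) &= -\lambda_{s,j}\bigl(\theta_j - \textstyle\sum_k p_{jk}\theta_k\bigr) + (\text{quadratic}) + o(1),
\end{align*}
with error terms controlled uniformly in $\theta$ on $\{0<\|\theta\|\le M\}$ by Lemma~\ref{lem:uniform 1}. (3) Identify the coefficients of the resulting quadratic form with the entries of $\Sigma$ in \eqref{eq:Sig} and of $R = I - P^{\rs{T}}$, so that the leading-order version of \eqref{eq:SE 1} becomes exactly $\gamma(\theta)\varphi^{(n)}(\theta) + \sum_j b_j\gamma_j(\theta)\varphi_j^{(n)}(\theta)$ up to vanishing error. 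This uses the heavy-traffic relation \eqref{eq:heavy 3}, i.e.\ $\lambda_s^{(n)} - \lambda_e^{(n)} - P^{\rs{T}}\lambda_s^{(n)} = r_n R b$, to produce the drift $\mu = -Rb$, and the relation $\E_\pi[Y_j(1)] = b_j$ from \eqref{eq:bEy} for the boundary terms. (4) Finally, show the indicator/truncation discrepancies — i.e.\ replacing $1(R^{(n)}_{s,j}(\infty) < 1/r_n)f_\theta^{(n)}$ by $e^{\br{\theta,Z^{(n)}(\infty)}}$ — are uniformly $o(\|\theta\|)$; this needs a bound of the form $\Prob(R^{(n)}_{s,j}(\infty) \ge 1/r_n) \to 0$ together with boundedness of the MGFs on $\theta\le 0$ (an MGF at a nonpositive argument is bounded by $1$), plus the fact that $g^{(n)}(y) = y$ on the relevant event.

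**Main obstacle.** The delicate point is the \emph{uniformity} in $\theta$ on the punctured set $\{\theta < 0,\ 0 < \|\theta\| \le M\}$, together with the division by $\|\theta\|$. Near $\theta = 0$ the quantities $\gamma(\theta)$, $\gamma_j(\theta)$, and the $\eta^{(n)}, \zeta^{(n)}$ functions all vanish to first order, so $\epsilon^{(n)}(\theta)$ is itself $O(\|\theta\|)$ and one must show the \emph{ratio} $\abs{\epsilon^{(n)}(\theta)}/\|\theta\|$ goes to zero; this forces the Taylor-expansion error bounds of Lemma~\ref{lem:uniform 1} to be of order $o(\|\theta\|)$ uniformly, rather than merely $o(1)$, and one cannot afford any error term that is $O(\|\theta\|)$ with a non-vanishing constant. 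Controlling the residual-time expectations to this precision — showing that $\E[e^{\eta_i^{(n)}(r_n\theta_i)g^{(n)}(R_{e,i}^{(n)}(\infty))}] = 1 + o(r_n\|\theta\|)$ uniformly, which is where stationarity of the residual process and the uniform integrability of the squared interarrival/service times must be combined carefully — is the technical heart of the argument. I would handle this by expanding the exponential to second order and using that, under stationarity, the residual time $R_{e,i}^{(n)}(\infty)$ has the equilibrium distribution with first moment $\tfrac{1}{2}\lambda_{e,i}^{(n)}(\sigma_{e,i}^{(n)2} + 1/\lambda_{e,i}^{(n)2})$, controlling the truncation tail via uniform integrability.
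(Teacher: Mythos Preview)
Your overall strategy matches the paper's: substitute $\theta\mapsto r_n\theta$ in \eqref{eq:SE 1}, normalize, Taylor-expand $\eta^{(n)}$ and $\zeta^{(n)}$, invoke heavy traffic, and control the residual-time discrepancies. However, several of the quantitative details you wrote down are off in ways that matter for the argument.

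\textbf{Normalization.} Your displayed expansion $\frac{1}{r_n}\eta_i^{(n)}(r_n\theta_i) = -\lambda_{e,i}\theta_i + \tfrac{1}{2}\lambda_{e,i}^3\sigma_{e,i}^2\theta_i^2 + o(1)$ is incorrect: since $\eta_i^{(n)}(x)\approx -\lambda_{e,i}x - \tfrac12\lambda_{e,i}^3\sigma_{e,i}^2 x^2$, the quadratic term carries an extra factor $r_n$ and vanishes at the $1/r_n$ scale. The paper instead multiplies \eqref{eq:SE 1} by $-1/r_n^2$ and applies this to the \emph{sum} $\sum_{i\in\sr{E}}\eta_i^{(n)}(r_n\theta_i)+\sum_{j\in\J}\zeta_j^{(n)}(r_n\theta)$; the $O(r_n)$ linear pieces then combine, via \eqref{eq:heavy 3}, to exactly $r_n^2\langle Rb,\theta\rangle$, and the $O(r_n^2)$ quadratic pieces give $\tfrac12\langle\theta,\Sigma\theta\rangle$. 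This is \eqref{eq:limiting 1} in Lemma~\ref{lem:limiting 1}.

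\textbf{Source of $b_j$.} The coefficient $b_j$ in front of $\gamma_j(\theta)\varphi_j^{(n)}(\theta)$ does not come from the SRBM identity \eqref{eq:bEy}; that is a limiting statement. In the prelimit it comes from Lemma~\ref{lem:boundary_probs}, $\Prob(L_j^{(n)}(\infty)=0)=r_nb_j/\lambda_{s,j}^{(n)}$, which converts $\frac{1}{r_n^2}\zeta_j^{(n)}(r_n\theta)\,\E[\,\cdot\,1(L_j^{(n)}(\infty)=0)]$ into $\frac{b_j}{r_n\lambda_{s,j}^{(n)}}\zeta_j^{(n)}(r_n\theta)\,\E[\,\cdot\mid L_j^{(n)}(\infty)=0]$, and the linear expansion of $\zeta_j^{(n)}$ then yields $b_j\gamma_j(\theta)$.

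\textbf{Residual-time control.} Your proposed estimate $\E[e^{\eta_i^{(n)}(r_n\theta_i)g^{(n)}(R_{e,i}^{(n)}(\infty))}]=1+o(r_n\|\theta\|)$ is both too strong and aimed at the wrong object: the expectation in \eqref{eq:SE 1} is of the \emph{joint} test function $f_{r_n\theta}^{(n)}(X^{(n)}(\infty))$, and $L^{(n)}(\infty)$ is not independent of $R_e^{(n)}(\infty),R_s^{(n)}(\infty)$, so you cannot factor. The paper instead bounds the difference $|f_{r_n\theta}^{(n)}(X^{(n)}(\infty))-e^{\langle\theta,Z^{(n)}(\infty)\rangle}|$ pointwise by $c_f(K)\cdot|\text{exponent}|$ and uses $|\eta_i^{(n)}(r_n\theta_i)|\,g^{(n)}(R_{e,i}^{(n)}(\infty))\le Cr_n\|\theta\|\,R_{e,i}^{(n)}(\infty)$ together with uniform integrability of $\{R_{e,i}^{(n)}(\infty)\}$ and $\{R_{s,j}^{(n)}(\infty)\}$ (Lemma~\ref{lem:residualUI}, itself proved via the BAR rather than equilibrium-residual formulas). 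This gives only $|\E[f_{r_n\theta}^{(n)}]-\varphi^{(n)}(\theta)|=o(1)$ uniformly (\eqref{eq:limiting 2}), which suffices because that difference is multiplied by the bounded quantity $\gamma(\theta)/\|\theta\|$.
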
 
This result states that the steady state distributions of the queue lengths asymptotically satisfy \eqref{eq:srbmbar} as $n \to \infty$. It plays an essential role in the proof of Theorem~\ref{thr:heavy traffic 1} in \sectn{proofs}.
 
The idea behind the proof is to use \eqref{eq:SE 1} as a starting point, and use quadratic approximations of $\eta_{i}^{(n)}(\theta_{i})$ and $\zeta_{j}^{(n)}(\theta)$ to arrive at \eqref{eq:prelimBAR}. In \sectn{taylor_exp} we use Taylor expansion to obtain the quadratic approximations of $\eta_{i}^{(n)}(\theta_{i})$ and $\zeta_{j}^{(n)}(\theta)$. We then show that under heavy traffic scaling, the associated approximation error vanishes in an appropriate fashion. We prove \lem{prelimitMGFBAR} in \sectn{MGFBAR}.

\subsection{Taylor Expansions}
\label{sect:taylor_exp}

We begin with a general lemma, which describes the behavior of functions that are implicitly defined in a manner similar to $\eta_{i}^{(n)}(\theta_{i})$ in \eqref{eq:eta 1}. The lemma is similar to Lemma 2.4 of \cite{Miya2017}.

\begin{lemma}
\label{lem:concave 1}
Let $H$ be a bounded, non-negative random variable with $\E H > 0$ and set
\begin{align*}
\lambda_H = \frac{1}{\E H}, \quad \sigma_H^2 = \mathrm{ Var} (H).
\end{align*}
Then the function $f(x): \R \to \R$ satisfying
\begin{align*}
\E (e^{f(x) H}) = e^{-x}, \quad x \in \blue{\big(-\infty, -\log(\Prob(H = 0)) \big)},
\end{align*}
is well defined and finite, with the convention that $-\log(0) = \infty$. Furthermore,
\begin{enumerate}[(a)]
\item $f(x)$ \blue{is infinitely differentiable.} \label{item:concave1_b}
\item $f(x)$ is decreasing and concave. \label{item:concave1_c}
\item For any \blue{$K \in (0, -\log(\Prob(H = 0))$}, $f(x)$ satisfies\label{item:concave1_d}
\begin{align}
\label{eq:generic asymptotic 1}
& \Big|f(x) + \lambda_H x + \frac 12 \lambda_H^{3} {\sigma}_{H}^{2} x^{2} \Big| \le c_{H}(x),\\
\label{eq:generic asymptotic 0}
 &|f(x)| \le \max\{\lambda_H, \abs{f(K)}/K\} |x|, \quad \abs{x} \leq K,
\end{align}
where 
\begin{align} \label{eq:generic_error}
c_{H}(x) =  \frac{x^{2} }{2} \sup_{|y|< \abs{x}} |f''(y) - f''(0)|.
\end{align}
\end{enumerate}
\end{lemma}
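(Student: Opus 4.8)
The plan is to derive everything from the defining equation $\E(e^{f(x)H}) = e^{-x}$ by studying the moment generating function $G(y) := \E(e^{yH})$. Since $H$ is bounded and nonnegative with $\E H > 0$, the function $G$ is real-analytic on all of $\R$, strictly increasing, strictly convex, and its range as $y$ sweeps $\R$ is exactly $(\Prob(H=0), \infty)$; since $e^{-x}$ ranges over $(\Prob(H=0), \infty)$ precisely as $x$ ranges over $\big(-\infty, -\log(\Prob(H=0))\big)$, the equation uniquely defines $f(x) = G^{-1}(e^{-x})$ on that interval, and $f$ is finite there. This gives the well-definedness claim.

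For part (\ref{item:concave1_b}), I would invoke the inverse/implicit function theorem: $G$ is real-analytic with $G'(y) = \E(He^{yH}) > 0$ everywhere (using $\E H > 0$), so $G^{-1}$ is real-analytic on the open interval $(\Prob(H=0),\infty)$, and composing with the analytic map $x \mapsto e^{-x}$ shows $f$ is infinitely differentiable (indeed analytic). For part (\ref{item:concave1_c}), differentiate the identity $\E(e^{f(x)H}) = e^{-x}$ implicitly: $f'(x)\,\E(He^{f(x)H}) = -e^{-x}$, so $f'(x) = -e^{-x}/\E(He^{f(x)H}) < 0$, giving monotonicity. For concavity, differentiate once more; the cleanest route is to note that $\log G$ is convex (it is a cumulant generating function), hence $(\log G)^{-1}$ composed appropriately inherits concavity — more concretely, writing $h(x) = f(x)$, differentiate $f'(x)\E(He^{fH}) = -e^{-x}$ again to get $f''\,\E(He^{fH}) + (f')^2\,\E(H^2 e^{fH}) = e^{-x}$, so $f'' = \big(e^{-x} - (f')^2 \E(H^2 e^{fH})\big)/\E(He^{fH})$; substituting $e^{-x} = \E(e^{fH})$ and $-e^{-x} = f'\E(He^{fH})$ one finds $f'' \le 0$ is equivalent to $\E(e^{fH})\E(H^2 e^{fH}) \ge \big(\E(He^{fH})\big)^2$, which is just Cauchy–Schwarz. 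That settles (\ref{item:concave1_c}).

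For part (\ref{item:concave1_d}): from the implicit differentiation at $x=0$ (where $f(0)=0$, since $G(0)=1=e^0$) I get $f'(0) = -1/\E H = -\lambda_H$ and, plugging $f(0)=0$, $f'(0)=-\lambda_H$ into the second-derivative formula, $f''(0) = -\lambda_H^3 \sigma_H^2$ after a short computation using $\E(H^2) = \sigma_H^2 + 1/\lambda_H^2$. Then \eqref{eq:generic asymptotic 1} is exactly the second-order Taylor remainder bound: $f(x) = f(0) + f'(0)x + \tfrac12 f''(0)x^2 + \tfrac12(f''(\xi)-f''(0))x^2$ for some $\xi$ between $0$ and $x$ by Taylor's theorem with the Lagrange/integral form of the remainder, and bounding $|f''(\xi)-f''(0)|$ by its sup over $|y| < |x|$ gives the stated $c_H(x)$. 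The bound \eqref{eq:generic asymptotic 0} follows from concavity: on $[-K,K]$ a concave function with $f(0)=0$ lies below both of its chords, so for $x \in [0,K]$ we have $0 \ge f(x) \ge \tfrac{x}{K}f(K)$, giving $|f(x)| \le \tfrac{|x|}{K}|f(K)|$, while for $x \in [-K,0]$ concavity with $f(0)=0$ gives $f(x) \le f'(0^+)\,x \cdot(\text{sign})$... more simply, $f(x)/x \le f'(0) = -\lambda_H$ in absolute value considerations together with $f(x)/x$ being monotone in $x$ (a consequence of concavity plus $f(0)=0$) yields $|f(x)| \le \max\{\lambda_H, |f(K)|/K\}|x|$ on $[-K,K]$; I would just say: by concavity and $f(0)=0$, the slope $f(x)/x$ is non-increasing in $x$, so it is sandwiched between $f(K)/K$ and $\lim_{x\uparrow 0}f(x)/x = f'(0) = -\lambda_H$ on $[-K,K]\setminus\{0\}$, giving the claimed linear bound.

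\textbf{Main obstacle.} I expect the only real subtlety is bookkeeping the domain: ensuring that $f$ and all the implicit-differentiation identities are valid on the correct open interval $\big(-\infty, -\log\Prob(H=0)\big)$ and that $K$ can be taken strictly inside it, so that $f(K)$ is finite and the sup in $c_H$ is over a set where $f''$ is bounded (it is, by continuity of $f''$ on the open interval). Everything else is a routine chain of implicit differentiation, Cauchy–Schwarz for concavity, and Taylor's theorem with remainder; the identification $f''(0) = -\lambda_H^3\sigma_H^2$ is the one computation worth doing carefully.
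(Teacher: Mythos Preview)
Your proposal is correct and follows essentially the same route as the paper: implicit function theorem for smoothness, implicit differentiation plus Cauchy--Schwarz for concavity, and second-order Taylor expansion for \eqref{eq:generic asymptotic 1}. The only cosmetic difference is in \eqref{eq:generic asymptotic 0}: the paper handles it by splitting cleanly into $x\le 0$ (where concavity gives $f(x)\le f'(0)x=\lambda_H|x|$) and $0\le x\le K$ (where convexity of $-f$ gives $|f(x)|\le \tfrac{|f(K)|}{K}x$ via the chord), which is a bit crisper than your slope-monotonicity argument but amounts to the same thing.
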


The proof of this lemma is deferred to \app{concave 1}. We now apply it to obtain expansions of $\eta_{i}^{(n)}(\theta_{i})$ and $\zeta_{j}^{(n)}(\theta)$. \blue{Let $M> 0 $ be as in Lemma~\ref{lem:terminal condition 1}.} Set
\begin{align*}
 & \frac{1}{\widetilde{\lambda}_{e,i}^{(n)}} = \dd{E}(g^{(n)}(T_{e,i}^{(n)})), \quad \big(\widetilde{\sigma}_{e,i}^{(n)}\big)^{2} = \mathrm{ Var}(g^{(n)}(T_{e,i}^{(n)})), \quad i \in \sr{E}, \\
 & \frac{1}{\widetilde{\lambda}_{s,j}^{(n)}} = \dd{E}(g^{(n)}(T_{s,j}^{(n)})), \quad \big(\widetilde{\sigma}_{s,j}^{(n)}\big)^{2} = \mathrm{ Var}(g^{(n)}(T_{s,j}^{(n)})), \quad j \in \sr{J}.
\end{align*} 
Observe that the uniform integrability assumptions \eqref{eq:uniarr} and \eqref{eq:uniser}, together with the definition of $g^{(n)}(x)$, imply that for all $i \in \mathcal{E}$ and $j \in \J$, 
\begin{align}
\widetilde{\lambda}_{e,i}^{(n)} \to \lambda_{e,i}, \quad \widetilde{\lambda}_{s,j}^{(n)} \to \lambda_{s,j}, \quad \widetilde{\sigma}_{e,i}^{(n)} \to \sigma_{e,i}, \quad \text{ and } \quad \widetilde{\sigma}_{s,j}^{(n)} \to \sigma_{s,j} \label{eq:tildes_converge}
\end{align} 
(cf. \eqref{eq:momentarr}, \eqref{eq:momentser} and \eqref{eq:lamdaconverge}). 
Lemma~\ref{lem:concave 1} now trivially applies to $\eta_{i}^{(n)}(\theta_{i})$ to obtain 
\begin{align}
\label{eq:eta asymptotic 1}
& \Big|\eta^{(n)}_i(\theta_i) + \widetilde{\lambda}^{(n)}_{e,i} \theta_i + \frac 12 \big(\widetilde{\lambda}^{(n)}_{e,i}\big)^{3} \big(\widetilde{\sigma}^{(n)}_{e,i}\big)^{2} \theta_i^{2} \Big| \le c^{(n)}_{e,i}(\theta_i),\\
\label{eq:eta asymptotic 0}
 &|\eta^{(n)}_i(\theta_i)| \le \hat{c}_{e,i}^{(n)}(M) |\theta_i| , \quad \abs{\theta_i} \leq M,
\end{align}
where $c^{(n)}_{e,i}$ corresponds to $c_H$ in \eqref{eq:generic_error} and for any \blue{$K \in (0,M]$, we define $\hat{c}_{e,i}^{(n)}(K)$ as}
\begin{align*}
\hat{c}_{e,i}^{(n)}(K) =  \max \big{\{}\widetilde{\lambda}^{(n)}_{e,i}, \abs{\eta^{(n)}_i(K)}/K \big{\}}.
\end{align*} 
Recall that $\zeta^{(n)}_{j}(\theta) = \chi^{(n)}_{j}(\log t_j(\theta))$, where $\chi^{(n)}_{j}$ is defined in \eqref{eq:chi 1} and 
\begin{align*}
\log t_j(\theta) = -\theta_j + \log \Big( \sum_{k \in \J} p_{jk}e^{\theta_k} + p_{j0} \Big) \in C^{\infty}(\R^d).
\end{align*}

By Taylor expansion, one can verify that $\log t_{j}(\theta)$ satisfies
\begin{align}\label{eq:t 1}
 & \Big|\log t_{j}(\theta) - \Big( - \theta_{j} + \sum_{k \in \sr{J}} p_{jk}\theta_{k}\Big) \\
 & \hspace{10ex} + \frac 12 \Big(\sum_{k \in \sr{J}}\theta_k^2 p_{jk} - \big( \sum_{k \in \sr{J}}\theta_k p_{jk} \big)^2 \Big)\Big| \le c_{1,j}(\theta), \notag \\
 & \Big|\log^{2} t_{j}(\theta) - \Big( - \theta_{j} + \sum_{k \in \sr{J}} p_{jk} \theta_{k} \Big)^{2} \Big| \le c_{2,j}(\theta), \notag
\end{align}
where
\begin{align}\label{eq:conetwo}
  & c_{1,j}(\theta) = \frac 16 \sup_{\|y\| \leq \|\theta\|} \Bigg| \sum_{k,l,m \in \sr{J}} \theta_k \theta_l \theta_m  \frac{\partial^3 \log t_j}{\partial \theta_k \partial \theta_l \partial \theta_m}(y)  \Bigg|,  \\
& c_{2,j}(\theta) = \frac 16 \sup_{\|y\| \leq \|\theta\|} \Bigg| \sum_{k,l,m \in \sr{J}} \theta_k \theta_l \theta_m  \frac{\partial^3 \log^2 t_j}{\partial \theta_k \partial \theta_l \partial \theta_m}(y)  \Bigg|. \notag
\end{align}
Note that both $c_{1,j}(\theta)$ and $c_{2,j}(\theta)$ are finite for each $\theta$ and $j \in \J$ because $\log t_j(\theta)$ belongs to $C^{\infty}(\R^d)$.

Applying Lemma~\ref{lem:concave 1} to $\chi^{(n)}_j$, together with the Taylor expansions above, we have the following result about $\zeta^{(n)}_j(\theta)$. 
For each $j \in \J$  and $n \geq 1$, 
\begin{align}
\label{eq:zeta asymptotic 1}
 & \Big|\zeta^{(n)}_{j}(\theta) + \widetilde{\lambda}^{(n)}_{s,j} \Big( - \theta_{j} + \sum_{k \in \sr{J}} p_{jk} \theta_{k} \Big) \\
 & \hspace{10ex}+ \frac{1}{2}\widetilde{\lambda}^{(n)}_{s,j} \Big(\sum_{k \in \sr{J}}\theta_k^2 p_{jk} - \big( \sum_{k \in \sr{J}}\theta_k p_{jk} \big)^2 \Big) \notag \\
 & \hspace{10ex} + \frac 12 \big(\widetilde{\lambda}^{(n)}_{s,j}\big)^{3} \big(\widetilde{\sigma}^{(n)}_{s,j}\big)^{2} \Big(- \theta_{j} + \sum_{k \in \sr{J}} p_{jk} \theta_{k} \Big)^{2}  \Big| \le c^{(n)}_{s,j}(\theta) ,\nonumber\\
\label{eq:zeta asymptotic 0}
 & |\zeta^{(n)}_{j}(\theta)| \le \hat{c}_{s,j}^{(n)}(M) \|\theta\|, \quad \|\theta\| \leq M,
\end{align}
where
\begin{align}\label{eq:err_zeta}
c^{(n)}_{s,j}(\theta) = \sup_{\abs{y} \leq \abs{\log t_j(\theta)}}& \frac 12 \log^2 t_j(\theta) \Big| \big(\chi^{(n)}_j\big)''(y) - \big(\chi^{(n)}_j\big)''(0)\Big| \\
&+ \widetilde{\lambda}^{(n)}_{s,j}c_{1,j}(\theta) +  \frac 12 \big(\widetilde{\lambda}^{(n)}_{s,j}\big)^{3} \big(\widetilde{\sigma}^{(n)}_{s,j}\big)^{2}c_{2,j}(\theta), \notag
\end{align}
$c_{1,j}(\theta)$ and $c_{2,j}(\theta)$ are as in \eqref{eq:conetwo}, \blue{and for any $K \in (0,M]$, }
\begin{align}\label{eq:clip_def} 
&\hat{c}_{s,j}^{(n)}(K) = c_{Lip,j}(K)  \max \big{\{}\widetilde{\lambda}^{(n)}_{s,j}, \abs{\chi^{(n)}_j(\widehat{K})}/\widehat{K} \big{\}}, \\
& \widehat{K} = \sup_{\|\theta\| \leq K} \abs{\log t_j(\theta)}, \quad \text{ and } \quad c_{Lip,j}(K) = \sup_{0 < \|\theta\| \leq K} \frac{\abs{\log t_j(\theta)}}{\|\theta\|}. \notag
\end{align}
We know $c_{Lip,j}(K)< \infty$ because $\log t_j(\theta) \in C^{\infty}(\R^d)$ and is therefore is locally Lipschitz.
\subsection{Error bounds}
\label{sect:error bounds}

In order for the quadratic approximations of $\eta_{i}^{(n)}(\theta_{i})$ and $\zeta_{j}^{(n)}(\theta)$ to be useful, we need the error bounds $c^{(n)}_{e,i}(\theta)$ and $c^{(n)}_{s,j}(\theta)$ to be small. Recall that \eqref{eq:SE 1} is a statement about the unscaled vector $X^{(n)}(\infty)$, but \eqref{eq:prelimBAR} deals with the scaled queue length vector $Z^{(n)}(\infty) = r_n L^{(n)}(\infty)$. From the form of the test function in \eqref{eq:testf}, we see that by replacing $\theta$ by $r_n \theta$, \eqref{eq:SE 1} becomes a statement about the scaled queue length $Z^{(n)}(\infty)$. Under the heavy traffic scaling, the errors from the quadratic approximations vanish asymptotically in neighborhoods of the origin. The following lemma presents this statement formally.
\begin{lemma}
\label{lem:uniform 1}
\blue{Let $M> 0 $ be as in Lemma~\ref{lem:terminal condition 1}. For any $K > 0$ such that $r_n K \leq M$, }
\begin{align} \label{eq:uniform_eta}
\lim_{n \to \infty} \sup_{0 < |\theta_i|<  K} \frac{c^{(n)}_{e,i}(r_n \theta_i)}{r_n^2 \theta_i^2}  =0, \quad i \in \sr{E},
\end{align}
and
\begin{align}\label{eq:uniform_zeta}
\lim_{n \to \infty} \sup_{0 < \|\theta\|<  K} \frac{c^{(n)}_{s,j}(r_n \theta)}{r_n^2 \|\theta\|^2}  =0, \quad j \in \J.
\end{align}
\end{lemma}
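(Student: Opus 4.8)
\textbf{Proof proposal for Lemma~\ref{lem:uniform 1}.}

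The plan is to reduce both statements to controlling the ``modulus of continuity'' of the second derivatives $f''$ appearing in the error term \eqref{eq:generic_error}, and then to show that this modulus, evaluated near the origin, vanishes uniformly in $n$. For the $\eta$-part \eqref{eq:uniform_eta}: by definition $c^{(n)}_{e,i}(r_n\theta_i) = \frac{1}{2}(r_n\theta_i)^2 \sup_{|y| < |r_n\theta_i|} |(\eta_i^{(n)})''(y) - (\eta_i^{(n)})''(0)|$ (here $\eta_i^{(n)}$ plays the role of $f$ in Lemma~\ref{lem:concave 1}), so after dividing by $r_n^2\theta_i^2$ the claim becomes
\begin{align*}
\lim_{n\to\infty} \sup_{|y| < r_n K} \big|(\eta_i^{(n)})''(y) - (\eta_i^{(n)})''(0)\big| = 0.
\end{align*}
Since $r_n K \to 0$, it suffices to show that $(\eta_i^{(n)})''(0)$ converges as $n \to \infty$ and that the family $\{(\eta_i^{(n)})''\}_n$ is equicontinuous on a fixed neighborhood of $0$ (say, $[-M_{e,i}/2, M_{e,i}/2]$). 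Equicontinuity on such a neighborhood would in turn follow from a uniform-in-$n$ bound on the \emph{third} derivative $(\eta_i^{(n)})'''$ there. I would obtain all of this by differentiating the defining relation $G_{e,i}^{(n)}(\eta_i^{(n)}(x)) = e^{-x}$ implicitly: this expresses $(\eta_i^{(n)})'$, $(\eta_i^{(n)})''$, $(\eta_i^{(n)})'''$ as rational functions of the derivatives $(G_{e,i}^{(n)})^{(k)}$ evaluated at $\eta_i^{(n)}(x)$, with the denominator a power of $(G_{e,i}^{(n)})'$. The derivatives $(G_{e,i}^{(n)})^{(k)}(y) = \E[(g^{(n)}(T_{e,i}^{(n)}))^k e^{y g^{(n)}(T_{e,i}^{(n)})}]$ are, on a bounded $y$-interval, controlled by moments of $g^{(n)}(T_{e,i}^{(n)})$ up to order $k$; the uniform integrability assumptions \eqref{eq:uniarr} only guarantee convergence of the first two moments (see \eqref{eq:tildes_converge}), but since $g^{(n)}(T_{e,i}^{(n)}) \le 1/r_n$ is bounded, the higher moments are finite for each $n$, and on the interval $|y| \le M_{e,i}/2$ with $M_{e,i} < -\log\Prob(T_{e,i}^{(n)}=0)$ uniformly, the quantity $(G_{e,i}^{(n)})'(y)$ stays bounded away from $0$ uniformly in $n$. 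This gives uniform bounds on $(\eta_i^{(n)})''$ and $(\eta_i^{(n)})'''$ near $0$, hence \eqref{eq:uniform_eta}.

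For the $\zeta$-part \eqref{eq:uniform_zeta}, I would break $c^{(n)}_{s,j}(\theta)$ into its three summands as displayed in \eqref{eq:err_zeta}. The second and third summands, $\widetilde{\lambda}^{(n)}_{s,j} c_{1,j}(\theta)$ and $\frac{1}{2}(\widetilde{\lambda}^{(n)}_{s,j})^3(\widetilde{\sigma}^{(n)}_{s,j})^2 c_{2,j}(\theta)$, are easy: $c_{1,j}$ and $c_{2,j}$ from \eqref{eq:conetwo} are $O(\|\theta\|^3)$ (they are cube-of-$\theta$ times a locally bounded third-derivative supremum), so $c_{1,j}(r_n\theta)/(r_n^2\|\theta\|^2)$ and $c_{2,j}(r_n\theta)/(r_n^2\|\theta\|^2)$ are $O(r_n \|\theta\|) \to 0$ uniformly on $\|\theta\| < K$; and $\widetilde{\lambda}^{(n)}_{s,j}, \widetilde{\sigma}^{(n)}_{s,j}$ converge by \eqref{eq:tildes_converge}, hence stay bounded. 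The first summand is $\frac{1}{2}\log^2 t_j(\theta)\,\sup_{|y| \le |\log t_j(\theta)|}|(\chi_j^{(n)})''(y) - (\chi_j^{(n)})''(0)|$. Since $\log t_j(0) = 0$ and $\log t_j$ is smooth, $|\log t_j(r_n\theta)| \le c_{Lip,j}(K)\, r_n \|\theta\|$, so $\log^2 t_j(r_n\theta)/(r_n^2\|\theta\|^2)$ is bounded, and we are again reduced to showing $\sup_{|y| \le c_{Lip,j}(K) r_n\|\theta\|}|(\chi_j^{(n)})''(y) - (\chi_j^{(n)})''(0)| \to 0$ uniformly, which is exactly the same equicontinuity-of-second-derivatives statement as before, now for $\chi_j^{(n)}$ defined via $G_{s,j}^{(n)}$ in \eqref{eq:chi 1}. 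The identical implicit-differentiation argument applies, using $M_{s,j} < -\log\Prob(T_{s,j}^{(n)}=0)$ uniformly in $n$ and the boundedness of $g^{(n)}(T_{s,j}^{(n)})$.

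The main obstacle I anticipate is \emph{uniformity in $n$} of the derivative bounds. The subtlety is that $g^{(n)}$ depends on $n$ (the truncation level is $1/r_n \to \infty$), and the distributions $T_{e,i}^{(n)}, T_{s,j}^{(n)}$ vary with $n$, so one cannot simply fix a single moment generating function. The fix is twofold: (i) all $y$-arguments stay in a \emph{fixed} compact interval strictly inside $(-\infty, -\log\Prob(\cdot = 0))$ uniformly in $n$ — this is guaranteed by the choices of $M_{e,i}, M_{s,j}$ already made in the proof of Lemma~\ref{lem:terminal condition 1} — so the exponential weights $e^{y g^{(n)}}$ are uniformly bounded; and (ii) on such an interval, the only moments entering the bounds that need to be controlled uniformly are the first and second, which converge by \eqref{eq:tildes_converge}; the higher-order moments enter only multiplied by factors that are themselves $O(r_n)$ after rescaling, or can be crudely dominated using $g^{(n)}(T) \le T$ pointwise together with $g^{(n)}(T)^k e^{yg^{(n)}(T)} \le T^k e^{yT}$ — wait, this last domination fails since $g^{(n)}(T) \le 1/r_n$ but could exceed... actually $g^{(n)}(T) = \min(T, 1/r_n) \le T$ always, and on the range $y \le 0$ relevant after the substitution $\theta \le 0$ this is monotone; for $y$ of either sign one instead uses that $g^{(n)}(T)^k e^{y g^{(n)}(T)} \le (1/r_n)^k e^{|y|/r_n}$ is finite but blows up, so the genuinely careful step is to show the \emph{denominators} $(G^{(n)})'(y)$ are bounded below uniformly — which holds because $(G^{(n)})'(y) = \E[g^{(n)}(T) e^{y g^{(n)}(T)}] \ge \E[g^{(n)}(T)\mathbf{1}(g^{(n)}(T) \le 1)] e^{-|y|}$, and $\E[g^{(n)}(T)\mathbf{1}(g^{(n)}(T)\le 1)] = \E[\min(T,1)\mathbf{1}(r_n \text{ small})]$ converges to $\E[\min(T_{\cdot},1)] > 0$ — and that the numerators, after the rescaling and division by $r_n^2\|\theta\|^2$, carry enough compensating powers of $r_n$. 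Making precise exactly which moments must converge versus merely be finite-for-each-$n$ is where the real work lies; everything else is routine Taylor estimation.
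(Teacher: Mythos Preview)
Your overall reduction is the same as the paper's: after dividing by $r_n^2\theta_i^2$, the $\eta$-claim becomes $\sup_{|y|\le K}\big|(\eta_i^{(n)})''(r_n y)-(\eta_i^{(n)})''(0)\big|\to 0$, and the $\zeta$-claim decomposes into the three summands of \eqref{eq:err_zeta}, the last two of which are $O(r_n\|\theta\|)$ exactly as you say. That part is fine.

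The gap is in how you propose to control the second-derivative modulus. You assert that it suffices to establish equicontinuity of $\{(\eta_i^{(n)})''\}_n$ on a \emph{fixed} neighborhood of $0$, via a uniform-in-$n$ bound on $(\eta_i^{(n)})'''$ there. This is too strong and, under the stated hypotheses, generally false. Differentiating the implicit relation brings in $\E\big[(g^{(n)}(T_{e,i}^{(n)}))^3 e^{\eta_i^{(n)}(z)g^{(n)}(T_{e,i}^{(n)})}\big]$, and on a fixed interval of $z$ this third moment is \emph{not} uniformly bounded in $n$: only $\{(T_{e,i}^{(n)})^2\}$ is assumed uniformly integrable, so $\E[(T_{e,i}^{(n)}\wedge 1/r_n)^3]$ can be of order $1/r_n$. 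Your sketch at the end (``higher-order moments enter only multiplied by $O(r_n)$'') is the right instinct, but it is inconsistent with the equicontinuity claim you made earlier, and you never make it precise.

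What the paper actually does is exploit that the interval shrinks: writing $(\eta_i^{(n)})''=\tfrac{a_1}{a_2}\big(1-\tfrac{a_1a_3}{a_2^2}\big)$ with $a_k^{(n)}(z)=\E\big[(g^{(n)}(T_{e,i}^{(n)}))^{k-1}e^{\eta_i^{(n)}(z)g^{(n)}(T_{e,i}^{(n)})}\big]$, it shows directly that $\sup_{|y|\le K}|a_k^{(n)}(r_ny)-a_k^{(n)}(0)|\to 0$ for $k=1,2,3$ and that the $a_k$ and $1/a_2$ are uniformly bounded on this shrinking range. The crucial point is that Lemma~\ref{lem:uniform_slopes} gives $|\eta_i^{(n)}(r_ny)\,g^{(n)}(T_{e,i}^{(n)})|\le \hat c_{e,i}^{(n)}(r_nK)\,|y|$, so the exponential bound $|e^x-1|\le |x|e^{|x|}$ produces an extra factor $r_n$ in front of the third-moment term, yielding
\[
\big|a_3^{(n)}(r_ny)-a_3^{(n)}(0)\big|\ \le\ C\, r_n K\,\E\big[(T_{e,i}^{(n)}\wedge 1/r_n)^3\big].
\]
The final step---which your proposal never identifies---is the splitting $r_n\,\E[(T\wedge 1/r_n)^3]=\E[(T\wedge 1/r_n)^2(r_nT\wedge 1)]\le \E[T^2\mathbf{1}(T>r_n^{-1/2})]+r_n^{1/2}\E[T^2]\to 0$, which is precisely how uniform integrability of the \emph{second} moment controls the scaled \emph{third} moment. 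This is the technical heart of the lemma; without it, no amount of implicit differentiation will close the estimate. The same mechanism, applied to $\chi_j^{(n)}$, handles the first summand of $c_{s,j}^{(n)}$.
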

This lemma is proved in Section~\appt{uniform1_gen_proof}. Our next result states that the functions $\eta^{(n)}_i(\theta_i)$ and $\zeta_j^{(n)}(\theta)$ are locally Lipschitz in small neighborhoods of the origin, with Lipschitz constants that do not depend on $n$. Its proof is postponed to Section~\ref{app:uniform_slopes}. 
\begin{lemma} \label{lem:uniform_slopes}
\blue{Let $M> 0 $ be as in Lemma~\ref{lem:terminal condition 1}.} For $i \in \sr{E}$ and $j \in \J$, \blue{and $K \in (0, r_n M]$, }
\begin{align*} 
\sup_{n \geq 1} \hat{c}_{e,i}^{(n)}(r_nK) < \infty \quad \text{ and } \quad \sup_{n \geq 1} \hat{c}_{s,j}^{(n)}( r_nK) < \infty.
\end{align*}
\end{lemma}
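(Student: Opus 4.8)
The plan is to unwind the definitions of $\hat{c}_{e,i}^{(n)}(K)$ and $\hat{c}_{s,j}^{(n)}(K)$ and reduce the problem to showing that $\abs{\eta_i^{(n)}(K)}/K$ and $\abs{\chi_j^{(n)}(\widehat{K})}/\widehat{K}$ stay bounded in $n$ for a fixed small argument. Recall
\[
\hat{c}_{e,i}^{(n)}(K) = \max\big\{\widetilde{\lambda}_{e,i}^{(n)},\ \abs{\eta_i^{(n)}(K)}/K\big\}, \qquad \hat{c}_{s,j}^{(n)}(K) = c_{Lip,j}(K)\,\max\big\{\widetilde{\lambda}_{s,j}^{(n)},\ \abs{\chi_j^{(n)}(\widehat{K})}/\widehat{K}\big\}.
\]
The factor $c_{Lip,j}(K)$ is a deterministic constant independent of $n$ (finite since $\log t_j \in C^\infty$), and the quantities $\widetilde{\lambda}_{e,i}^{(n)}$, $\widetilde{\lambda}_{s,j}^{(n)}$ converge to the finite limits $\lambda_{e,i}$, $\lambda_{s,j}$ by \eqref{eq:tildes_converge}, hence are bounded in $n$. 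So everything reduces to uniformly bounding $\abs{\eta_i^{(n)}(r_nK)}$ in terms of $r_nK$, and $\abs{\chi_j^{(n)}(\widehat{K}')}$ in terms of $\widehat{K}'$, where $\widehat{K}' = \sup_{\|\theta\|\le r_nK}\abs{\log t_j(\theta)}$ (note $\widehat{K}' \le c_{Lip,j}(M)\,r_nK$, so $\widehat{K}' \to 0$).

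First I would handle $\eta_i^{(n)}$. Using the explicit formula $\eta_i^{(n)}(\theta_i) = (G_{e,i}^{(n)})^{-1}(e^{-\theta_i})$ from the proof of Lemma~\ref{lem:terminal condition 1}, and recalling that $\eta_i^{(n)}$ is concave and decreasing with $\eta_i^{(n)}(0) = 0$ (by Lemma~\ref{lem:concave 1}(\ref{item:concave1_c})), a concave function through the origin lies below each of its chords, so for $\theta_i \in (0, M]$ we have $0 > \eta_i^{(n)}(\theta_i) \ge \frac{\theta_i}{M}\eta_i^{(n)}(M)$, i.e.\ $\abs{\eta_i^{(n)}(\theta_i)}/\theta_i \le \abs{\eta_i^{(n)}(M)}/M$. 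Thus it suffices to bound $\abs{\eta_i^{(n)}(M)}$ uniformly in $n$. From $G_{e,i}^{(n)}(\eta_i^{(n)}(M)) = e^{-M}$ and the bound $G_{e,i}^{(n)}(y) \ge e^{y\E[g^{(n)}(T_{e,i}^{(n)})]}$ by Jensen's inequality (note $y = \eta_i^{(n)}(M) < 0$, so the inequality goes the right way after taking logs), I get $\eta_i^{(n)}(M) \ge -M\widetilde{\lambda}_{e,i}^{(n)}$, which is bounded below uniformly in $n$ by \eqref{eq:tildes_converge}. This gives $\sup_n \abs{\eta_i^{(n)}(M)} < \infty$ and hence $\sup_n \hat{c}_{e,i}^{(n)}(r_nK) < \infty$ for any $K \le M/r_n$. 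The analogous argument, applied to $\chi_j^{(n)}(y) = (G_{s,j}^{(n)})^{-1}(e^{-y})$ (also concave, decreasing, vanishing at $0$ with $(\chi_j^{(n)})'(0) = -\widetilde{\lambda}_{s,j}^{(n)}$), gives $\abs{\chi_j^{(n)}(y)}/y \le \abs{\chi_j^{(n)}(M_{s,j}/2)}/(M_{s,j}/2) \le \widetilde{\lambda}_{s,j}^{(n)}$-type bound on $(0, M_{s,j}/2]$, say, and since $\widehat{K}' \to 0 < M_{s,j}/2$ for large $n$ (shrinking $M$ if necessary so that $c_{Lip,j}(M)M < M_{s,j}/2$ for all $j$), the quantity $\abs{\chi_j^{(n)}(\widehat{K}')}/\widehat{K}'$ is bounded in $n$, and multiplying by the $n$-independent constant $c_{Lip,j}(r_nK) \le c_{Lip,j}(M)$ finishes the claim.

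The main obstacle is a bookkeeping one rather than a conceptual one: one must be careful that the argument $r_nK$ (resp.\ $\widehat{K}'$) at which $\eta_i^{(n)}$ (resp.\ $\chi_j^{(n)}$) is evaluated genuinely lies inside the domain of definition $(-\infty, -\log\Prob(T^{(n)}=0))$ \emph{uniformly} in $n$ — this is exactly where the existence of the $n$-independent thresholds $M_{e,i}, M_{s,j} > 0$ from Lemma~\ref{lem:terminal condition 1} is used, and where one may need to shrink $M$ once more. The only genuinely analytic input is the concavity-chord bound, which converts a statement about the slope near $0$ (hard to control directly because $\eta_i^{(n)}, \chi_j^{(n)}$ are defined implicitly) into a statement about a single function value at a fixed point, which is then estimated by Jensen's inequality. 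Everything else is continuity of the moment functionals under the uniform-integrability hypotheses \eqref{eq:uniarr}--\eqref{eq:uniser}, already packaged in \eqref{eq:tildes_converge}.
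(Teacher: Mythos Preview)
Your overall reduction is sound: by concavity of $\eta_i^{(n)}$ the chord bound $|\eta_i^{(n)}(r_nK)|/(r_nK)\le|\eta_i^{(n)}(M)|/M$ is correct, and likewise for $\chi_j^{(n)}$. However, the Jensen step you invoke to control $|\eta_i^{(n)}(M)|$ points the \emph{wrong way}. From $G_{e,i}^{(n)}(y)=\E\big[e^{yg^{(n)}(T_{e,i}^{(n)})}\big]\ge e^{y/\widetilde\lambda_{e,i}^{(n)}}$ and $G_{e,i}^{(n)}\big(\eta_i^{(n)}(M)\big)=e^{-M}$ one gets
\[
e^{-M}\ \ge\ e^{\eta_i^{(n)}(M)/\widetilde\lambda_{e,i}^{(n)}},\qquad\text{hence}\qquad \eta_i^{(n)}(M)\ \le\ -M\,\widetilde\lambda_{e,i}^{(n)},
\]
i.e.\ $|\eta_i^{(n)}(M)|\ge M\widetilde\lambda_{e,i}^{(n)}$, which is a \emph{lower} bound on $|\eta_i^{(n)}(M)|$, not the upper bound you need. (Equivalently: concavity plus $(\eta_i^{(n)})'(0)=-\widetilde\lambda_{e,i}^{(n)}$ gives the tangent-line inequality $\eta_i^{(n)}(x)\le -\widetilde\lambda_{e,i}^{(n)}x$ for $x>0$, again the wrong direction.) So the argument as written does not close.

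The paper's proof uses a genuinely different mechanism that exploits the smallness of the evaluation point $r_nK$. It bounds $|\eta_i^{(n)}(r_nK)|/(r_nK)$ by $\sup_{0\le y\le r_nK}|(\eta_i^{(n)})'(y)|$, writes the derivative explicitly, and reduces to bounding $\E\big[g^{(n)}(T_{e,i}^{(n)})e^{g^{(n)}(T_{e,i}^{(n)})\eta_i^{(n)}(r_nK)}\big]$ away from zero. Using uniform integrability of $(T_{e,i}^{(n)})^2$ and $\lambda_{e,i}<\infty$, one finds $y_1<y_2$ with $\inf_n\Prob\big(T_{e,i}^{(n)}\in[y_1,y_2)\big)\ge\epsilon/2$, and then the crude estimate $e^{-r_nK}=\E\big[e^{\eta_i^{(n)}(r_nK)g^{(n)}(T_{e,i}^{(n)})}\big]\le \Prob(T_{e,i}^{(n)}<y_1)+e^{y_1\eta_i^{(n)}(r_nK)}$ gives $\liminf_n\eta_i^{(n)}(r_nK)\ge y_1^{-1}\log\epsilon>-\infty$, precisely because $e^{-r_nK}\to1>1-\epsilon\ge\limsup_n\Prob(T_{e,i}^{(n)}<y_1)$. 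This last inequality is what fails if you evaluate at the fixed point $M$ instead of at $r_nK$: you would need $e^{-M}>\limsup_n\Prob(T_{e,i}^{(n)}<y_1)$, which is not guaranteed by the standing assumptions without further work. In short, Jensen cannot replace the localization-near-zero argument here.
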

In the rest of this section, we will frequently use the following bound. Recall the definition of  $f_{r_n\theta}^{(n)}(x)$ from \eqref{eq:testf}, where $x = (\ell,u,v) \in \Z^d_+ \times \R^{2d}_+$. Using \eqref{eq:eta asymptotic 0}, \eqref{eq:zeta asymptotic 0}, and Lemma~\ref{lem:uniform_slopes}, it follows that for any $K \in (0,M]$,  we can define $c_f(K)$ to satisfy
\begin{align} \label{eq:testf_bound}
 &\sup_{n \geq 1} \sup_{\substack{\theta < 0 \\ \|\theta\| \leq K}}  f_{r_n \theta}^{(n)}(x) \\
 & \quad = \sup_{n \geq 1} \sup_{\substack{\theta < 0 \\ \|\theta\| \leq K}} e^{\br{{r_n\theta}, {\ell}}+ \sum_{i \in \sr{E}} \eta_i^{(n)}(r_n\theta_i) g^{(n)} (u_i)  + \sum_{j \in \J} \zeta_j^{(n)}(r_n \theta) g^{(n)} (v_j)} \notag \\
 & \quad \leq c_f(K) \equiv  \sup_{n \geq 1}e^{\sum_{i \in \sr{E}} \hat{c}_{e,i}^{(n)}(r_nK) K +\sum_{j \in \J} \hat{c}_{s,j}^{(n)}(r_nK) K}  < \infty. \notag
\end{align}
This bound holds for all $x \in \Z^d_+ \times \R^{2d}_+$.
We now state several lemmas that we will use to prove Proposition~\ref{lem:prelimitMGFBAR}.
\begin{lemma}
\label{lem:boundary_probs}
Recall the heavy traffic condition \eqref{eq:heavy 1}. For any station $j \in \J$,
\begin{align*}
\dd{P}(L_{j}^{(n)}(\infty)=0) = 1 - \lambda^{(n)}_{a,j}/\lambda^{(n)}_{s,j} = r_{n} b_{j}/\lambda^{(n)}_{s,j}.
\end{align*}
\end{lemma}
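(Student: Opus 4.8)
The plan is to prove the classical single-server utilization identity $\Prob(L^{(n)}_j(\infty)=0)=1-\lambda^{(n)}_{a,j}/\lambda^{(n)}_{s,j}$ and then read off the asserted formula from the heavy traffic relation \eqref{eq:heavy 1}, which gives $1-\lambda^{(n)}_{a,j}/\lambda^{(n)}_{s,j}=(\lambda^{(n)}_{s,j}-\lambda^{(n)}_{a,j})/\lambda^{(n)}_{s,j}=r_nb_j/\lambda^{(n)}_{s,j}$. We may assume $\lambda^{(n)}_{a,j}>0$; otherwise no customer ever visits station $j$, so $L^{(n)}_j(\infty)=0$ a.s., and \eqref{eq:heavy 1} forces $\lambda^{(n)}_{s,j}=r_nb_j$, making both sides of the asserted identity equal to $1$. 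Fix $n$, run $X^{(n)}$ in its stationary regime (legitimate by \eqref{eq:positiveRecurrent}), and recall that positive Harris recurrence makes $X^{(n)}$ ergodic, so time averages converge a.s.\ to stationary expectations. Write $p_{0,j}=\Prob(L^{(n)}_j(\infty)=0)$ and let $\delta_j$ denote the long-run departure rate $\lim_{t\to\infty}D_j(t)/t$, whose existence will come out of the argument.

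\emph{Step 1: the departure rate equals the service rate times the busy fraction.} Using \eqref{eq:idle_time}, \eqref{eq:service_times}, \eqref{eq:departure_process} and the continuity and monotonicity of $t\mapsto B_j(t)$, I would first check that $S_j(q)\le t$ iff $B_j(t)\ge V_j(q)$, so that $D_j(t)=N_j(B_j(t))$, where $N_j(s)=\#\{q\ge1:V_j(q)\le s\}$ is the delayed renewal counting process with initial delay $R^{(n)}_{s,j}(0)$ (a.s.\ finite) and i.i.d.\ inter-renewal times $\{T^{(n)}_{s,j}(m)\}$ of mean $1/\lambda^{(n)}_{s,j}$. Birkhoff's ergodic theorem applied to the stationary process $1(L^{(n)}_j(\cdot)\ge1)$ gives $B_j(t)/t\to 1-p_{0,j}$ a.s.; since $\lambda^{(n)}_{a,j}>0$, infinitely much work is processed at station $j$, so $B_j(t)\to\infty$ a.s.; and the strong law for delayed renewal processes gives $N_j(s)/s\to\lambda^{(n)}_{s,j}$ a.s. Multiplying the two limits, $\delta_j=\lambda^{(n)}_{s,j}(1-p_{0,j})$ a.s., a deterministic constant; the same reasoning shows $\delta_k=\lim_{t\to\infty}D_k(t)/t$ exists and is deterministic for every $k\in\J$.

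\emph{Step 2: the departure rate equals the total arrival rate.} Next I would divide \eqref{eq:dynamics_queue} by $t$ and send $t\to\infty$: the renewal SLLN gives $E_j(t)/t\to\lambda^{(n)}_{e,j}$ (with $\lambda^{(n)}_{e,j}=0$ for $j\notin\sr{E}$), and the SLLN for the i.i.d.\ routing vectors in \eqref{eq:routing_vector} gives $\Phi^{(k)}_j(D_k(t))/t\to p_{kj}\delta_k$ a.s. Hence $L^{(n)}_j(t)/t$ converges a.s.\ to the deterministic constant $\ell_j=\lambda^{(n)}_{e,j}-\delta_j+\sum_{k\in\J}p_{kj}\delta_k\ge0$. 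If $\ell_j>0$ then $L^{(n)}_j(t)\to\infty$ a.s., which contradicts $L^{(n)}_j(t)\stackrel{d}{=}L^{(n)}_j(\infty)$ being a.s.\ finite; so $\ell_j=0$, i.e.\ $\delta=\lambda^{(n)}_e+P^{\rs{t}}\delta$. This is the traffic equation \eqref{eq:traffic 2}, whose unique solution is $\lambda^{(n)}_a$, so $\delta_j=\lambda^{(n)}_{a,j}$. Combining with Step 1 gives $\lambda^{(n)}_{a,j}=\lambda^{(n)}_{s,j}(1-p_{0,j})$, hence the utilization identity and the lemma.

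I expect the only real obstacle to be technical bookkeeping rather than any new idea: in Step 1 one must secure $B_j(t)\to\infty$ before applying the renewal SLLN through the random time change $B_j(t)$, and in Step 2 one must resist the temptation to simply take stationary expectations in \eqref{eq:dynamics_queue} and cancel $\E L^{(n)}_j(0)=\E L^{(n)}_j(t)$ — that cancellation is unjustified without a priori knowledge that $\E L^{(n)}_j(\infty)<\infty$, which is why the argument above proceeds pathwise and excludes $\ell_j>0$ using only stationarity and a.s.-finiteness of the stationary marginal.
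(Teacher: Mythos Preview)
Your proof is correct but follows a genuinely different route from the paper. The paper establishes the lemma in Section~\ref{app:bp+ru} entirely through the BAR \eqref{eq:hard_BAR}: applying it to the test function $f(x)=v_j\wedge\kappa$ and letting $\kappa\to\infty$ yields $\Prob(L_j^{(n)}(\infty)>0)=\E_{\nu^{(n)}}[D_j^{(n)}(0,1]]/\lambda^{(n)}_{s,j}$, and then a truncation-and-dominated-convergence argument on the flow balance \eqref{eq:dynamics_queue} (truncating $L_j$ at level $\kappa$ to justify $\E_{\nu^{(n)}}[L_j^{(n)}(t)-L_j^{(n)}(0)]=0$) gives $\E_{\nu^{(n)}}[D_j^{(n)}(0,1]]=\lambda^{(n)}_{a,j}$. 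Your approach is instead pathwise: Birkhoff's ergodic theorem plus the renewal SLLN through the random time change $B_j(t)$ give the departure rate, and you rule out linear growth of $L_j^{(n)}(t)$ via stationarity rather than by taking expectations. Both arguments correctly sidestep the a priori unknown finiteness of $\E L_j^{(n)}(\infty)$ --- the paper via truncation, you via almost-sure limits. The paper's choice is deliberate, keeping the proof inside the BAR framework that is the methodological point of the article; your argument is more elementary and closer to the classical utilization identity for a work-conserving single-server queue, at the cost of importing ergodic-theoretic tools the paper otherwise avoids.
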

\begin{lemma}
\label{lem:residualUI}
The sequences $\{{R}^{(n)}_{e,i}(\infty), n \geq 1 \}$ and $\{{R}^{(n)}_{s,j}(\infty), n \geq 1 \}$ are uniformly integrable for all $i \in \sr{E}$ and $j \in \J$.
\end{lemma}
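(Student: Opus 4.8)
The plan is to use the BAR \eqref{eq:hard_BAR} to pin down the one–dimensional stationary laws of $R^{(n)}_{e,i}(\infty)$ and $R^{(n)}_{s,j}(\infty)$ in closed ``equilibrium–distribution'' form, and then to bound their truncated first moments directly from the second–moment uniform integrability assumptions \eqref{eq:uniarr}--\eqref{eq:uniser}. Fix a network in the sequence and let $\nu$ be its stationary distribution. For $i\in\mathcal{E}$ and a bounded Lipschitz $h:\R_+\to\R$ with $h(0)=0$, apply \eqref{eq:hard_BAR} to $f(\ell,u,v)=h(u_i)\in\mathcal{D}$: then $\mathcal{A}f(x)=-h'(u_i)$, and in the jump term only the external arrivals at station $i$ change $u_i$, each moving it from $0$ to a fresh $T_{e,i}(q)$ --- exactly the situation analysed around \eqref{eq:jump1_zero}. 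Using the independence of $T_{e,i}(q)$ from $U_i(q)$ noted there, together with the linearity in $t$ of $\E_\nu[E_i(t)-E_i(0)]$ (stationarity), \eqref{eq:hard_BAR} collapses to $\E[h'(R_{e,i}(\infty))]=\beta_i\,\E[h(T_{e,i})]$ with $\beta_i<\infty$ the stationary external arrival rate. The choice $f(\ell,u,v)=h(v_j)$ for $j\in\J$ is handled identically: $\mathcal{A}f(x)=-h'(v_j)\mathbf{1}(\ell_j>0)$, only departures from station $j$ contribute to the jump term (each moving $v_j$ from $0$ to $T_{s,j}(q)$, as around \eqref{eq:jump2_zero}), and one gets $\E[h'(R_{s,j}(\infty))\mathbf{1}(L_j(\infty)>0)]=\delta_j\,\E[h(T_{s,j})]$ with $\delta_j<\infty$ the stationary departure rate. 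Approximating $h(y)=(y-x)^+$ from below by such functions and passing to the limit yields, for all $x\ge 0$ and all $n$,
\begin{align*}
&\Prob\big(R^{(n)}_{e,i}(\infty)>x\big)=\beta^{(n)}_i\,\E\big[(T^{(n)}_{e,i}-x)^+\big],\\
&\Prob\big(R^{(n)}_{s,j}(\infty)>x,\ L^{(n)}_j(\infty)>0\big)=\delta^{(n)}_j\,\E\big[(T^{(n)}_{s,j}-x)^+\big].
\end{align*}
Setting $x=0$ gives $\beta^{(n)}_i=\Prob(R^{(n)}_{e,i}(\infty)>0)\,\lambda^{(n)}_{e,i}\le\lambda^{(n)}_{e,i}$ and $\delta^{(n)}_j=\Prob(L^{(n)}_j(\infty)>0)\,\lambda^{(n)}_{s,j}\le\lambda^{(n)}_{s,j}$, so both prefactors are bounded uniformly in $n$ by \eqref{eq:momentarr} and \eqref{eq:lamdaconverge}.

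It remains to describe $R^{(n)}_{s,j}(\infty)$ on the event $\{L^{(n)}_j(\infty)=0\}$. There, by the convention of Section~\ref{sect:network}, $R^{(n)}_{s,j}(\infty)$ equals the service time of the next customer to enter service at station $j$; since the service-time sequences are independent of the arrival and routing primitives (the same independence used throughout Section~\ref{sect:dynamics}), this is a fresh copy of $T^{(n)}_{s,j}$ that is independent of the event $\{L^{(n)}_j(\infty)=0\}$, whence $\Prob(R^{(n)}_{s,j}(\infty)>x,\ L^{(n)}_j(\infty)=0)=\Prob(L^{(n)}_j(\infty)=0)\,\Prob(T^{(n)}_{s,j}>x)$.

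The proof then finishes with an elementary estimate. For nonnegative $W$, $\E[W\mathbf{1}(W>K)]=K\Prob(W>K)+\int_K^\infty\Prob(W>x)\,dx$; and for nonnegative $T$, $K\,\E[(T-K)^+]\le\E[T^2\mathbf{1}(T>K)]$, $\int_K^\infty\E[(T-x)^+]\,dx\le\tfrac12\E[T^2\mathbf{1}(T>K)]$, and $K\Prob(T>K)+\int_K^\infty\Prob(T>x)\,dx=\E[T\mathbf{1}(T>K)]$. Substituting the three tail formulas,
\begin{align*}
\E\big[R^{(n)}_{e,i}(\infty)\mathbf{1}(R^{(n)}_{e,i}(\infty)>K)\big]&\le\tfrac32\,\lambda^{(n)}_{e,i}\,\E\big[(T^{(n)}_{e,i})^2\mathbf{1}(T^{(n)}_{e,i}>K)\big],\\
\E\big[R^{(n)}_{s,j}(\infty)\mathbf{1}(R^{(n)}_{s,j}(\infty)>K)\big]&\le\tfrac32\,\lambda^{(n)}_{s,j}\,\E\big[(T^{(n)}_{s,j})^2\mathbf{1}(T^{(n)}_{s,j}>K)\big]+\E\big[T^{(n)}_{s,j}\mathbf{1}(T^{(n)}_{s,j}>K)\big].
\end{align*}
By \eqref{eq:uniarr}--\eqref{eq:uniser} the families $\{(T^{(n)}_{e,i})^2\}$ and $\{(T^{(n)}_{s,j})^2\}$ are uniformly integrable, hence bounded in $L^1$, so $\sup_n\E[T^{(n)}_{s,j}\mathbf{1}(T^{(n)}_{s,j}>K)]\le K^{-1}\sup_n\E[(T^{(n)}_{s,j})^2]\to 0$; together with $\sup_n\lambda^{(n)}_{e,i}<\infty$ and $\sup_n\lambda^{(n)}_{s,j}<\infty$ from \eqref{eq:momentarr} and \eqref{eq:lamdaconverge}, the right-hand sides tend to $0$ as $K\to\infty$ uniformly in $n$, which is the claimed uniform integrability.

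I expect the main obstacle to be the service-residual case. The benign points --- that only departures from $j$ survive in the jump term of \eqref{eq:hard_BAR} for $f=h(v_j)$ and that $T_{s,j}(q)$ is independent of $S_j(q)$ --- are already established in Section~\ref{sect:dynamics}, but the conditional law of $R^{(n)}_{s,j}(\infty)$ given $\{L^{(n)}_j(\infty)=0\}$ is more delicate: it hinges on the paper's convention for the residual service time at an empty station together with the mutual independence of the primitive sequences. One must also carefully justify the monotone passage from bounded test functions in $\mathcal{D}$ to the unbounded $h(y)=(y-x)^+$, and the linearity in $t$ of $\E_\nu[E_i(t)-E_i(0)]$ and $\E_\nu[D_j(t)-D_j(0)]$ --- only finiteness of the slopes is needed, and that drops out of the tail identities at $x=0$, consistently with Lemma~\ref{lem:boundary_probs}.
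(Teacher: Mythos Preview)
Your proof is correct and follows the same underlying strategy as the paper: apply the BAR \eqref{eq:hard_BAR} to test functions depending only on a single residual coordinate, use the independence facts from Section~\ref{sect:dynamics} to evaluate the jump term, handle the empty-queue case via the convention $[R^{(n)}_{s,j}(\infty)\mid L^{(n)}_j(\infty)=0]\stackrel{d}{=}T^{(n)}_{s,j}$, and conclude from the uniform integrability of $\{(T^{(n)})^2\}$.

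The only difference is cosmetic. The paper plugs in the quadratic test function $f_{\kappa}(x)=[v_j^2\mathbf 1(v_j>K)+K^2\mathbf 1(v_j\le K)]\wedge\kappa^2$ (and its arrival analogue) to obtain the truncated moment in one step, namely
\[
\E\big[R^{(n)}_{s,j}(\infty)\mathbf 1(R^{(n)}_{s,j}(\infty)>K,\,L^{(n)}_j(\infty)>0)\big]=\tfrac{\delta^{(n)}_j}{2}\,\E\big[\big((T^{(n)}_{s,j})^2-K^2\big)\mathbf 1(T^{(n)}_{s,j}>K)\big],
\]
whereas you first extract the equilibrium-distribution tail $\Prob(R>x)=\beta\,\E[(T-x)^+]$ via the linear test function $h(y)=(y-x)^+$ and then integrate. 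If you carry out your tail integration exactly rather than bounding, you recover the paper's identity verbatim, so the two computations are equivalent. Your route has the advantage of making the classical renewal-theoretic content explicit; the paper's is one line shorter. The delicate points you flag (monotone passage through truncation, the conditional law on $\{L_j=0\}$) are handled by the paper in exactly the way you anticipate.
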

We would like to point out that Lemmas~\ref{lem:boundary_probs} and \ref{lem:residualUI} are not novel, and can be proved using the well developed theory of Palm calculus (see for example \cite[Chapter 1]{BaccBrem2003} or \cite[Chapter 4]{BoucDijk2011}). However, to keep this paper self-contained, we avoid using Palm calculus and prove these lemmas in Section~\appt{bp+ru} using the BAR \eqref{eq:hard_BAR}.

\begin{lemma}
\label{lem:limiting 1}
\blue{Let $M> 0 $ be as in Lemma~\ref{lem:terminal condition 1}, and $K \in (0, r_n M]$.} The following statements are true:
\begin{align}\label{eq:limiting 1}
 & \lim_{n \to \infty} \sup_{\substack{\theta < 0 \\ 0 < \|\theta\| \leq K}} \frac{1}{\|\theta\|} \Big|  -\frac{1}{r_n^2}\Big( \sum_{i \in {\cal E}} \eta_{i}^{(n)}(r_n \theta_{i}) + \sum_{j \in \J} \zeta_{j}^{(n)}(r_n \theta)\Big) - \gamma(\theta) \Big|\\
 & \quad = 0, \notag\\
 & \lim_{n \to \infty} \sup_{\substack{\theta < 0 \\ \|\theta\| \leq K}}  \Big| \E \big[f_{r_n \theta}^{(n)}(X^{(n)}(\infty))\big]  - \varphi^{(n)}(\theta) \Big| = 0, \label{eq:limiting 2} \\
& \lim_{n \to \infty}  \sup_{\substack{\theta < 0 \\  \|\theta\| \leq K}} \frac{1}{r_n} \Big| \E \big[1( L_{j}^{(n)}(\infty)=0)\big(f_{r_n \theta}^{(n)}(X^{(n)}(\infty))- e^{\br{\theta, {Z}^{(n)}(\infty)}}\big)\big] \Big|\label{eq:limiting 3} \\
& \quad = 0, \qquad j \in \J. \notag 
\end{align}
\end{lemma}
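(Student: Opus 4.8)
The plan is to prove the three limits in Lemma~\ref{lem:limiting 1} separately, as each isolates a different quantity from Lemma~\ref{lem:SE 1} and shows it converges to the corresponding term of the SRBM's BAR \eqref{eq:srbmbar}. Throughout, the engine is the pair of quadratic expansions \eqref{eq:eta asymptotic 1} and \eqref{eq:zeta asymptotic 1}, the error bounds from Lemma~\ref{lem:uniform 1}, the uniform Lipschitz bounds from Lemma~\ref{lem:uniform_slopes}, and the convergence of the truncated moments \eqref{eq:tildes_converge}, together with the uniform integrability of residuals (Lemma~\ref{lem:residualUI}) and the boundary probability formula (Lemma~\ref{lem:boundary_probs}).

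For \eqref{eq:limiting 1}, I would substitute $\theta \mapsto r_n\theta$ into \eqref{eq:eta asymptotic 1} and \eqref{eq:zeta asymptotic 1}, divide by $r_n^2$, and sum over $i \in \sr E$ and $j \in \J$. The leading linear terms, after division by $r_n^2$, are of order $1/r_n$ and would seem to blow up; the point is that they cancel when summed, exactly reproducing the drift term $\br{-Rb,\theta}$ of $\gamma(\theta)$ via the heavy traffic relation \eqref{eq:heavy 3} and the definition \eqref{eq:mu} of $\mu$. More precisely, $-\sum_{i\in\sr E}\widetilde\lambda_{e,i}^{(n)}\theta_i - \sum_{j\in\J}\widetilde\lambda_{s,j}^{(n)}(-\theta_j + \sum_k p_{jk}\theta_k)$ converges to $-\br{\lambda_e,\theta} - \br{\lambda_s, (I-P^{\rs t})... }$-type expression, which by \eqref{eq:heavy 3} is $r_n \br{Rb,\theta} + o(r_n)$ — so after dividing by $r_n^2$... actually one must be careful: the correct bookkeeping uses $\lambda_s^{(n)} - \lambda_a^{(n)} = b r_n$ so that $\lambda_s^{(n)} - \lambda_e^{(n)} - P^{\rs T}\lambda_s^{(n)} = r_n Rb$, giving a factor of $r_n$, and the $1/r_n^2$ scaling is then balanced by the remaining $r_n$ coming from the interplay with the quadratic terms. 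The quadratic terms, divided by $r_n^2$, give $\frac12\br{\theta,\Sigma\theta}$ after matching with \eqref{eq:Sig}; this requires carefully collecting the $\widetilde\lambda_{s,k}^2\widetilde\sigma_{s,k}^2(p_{ki}-\delta_{ki})(p_{kj}-\delta_{kj})$ contributions from the $\zeta$ expansion, the $p_{ki}(\delta_{ij}-p_{kj})$-type contributions from the ``variance of routing'' term $\sum_k\theta_k^2 p_{jk} - (\sum_k\theta_k p_{jk})^2$, and the $\lambda_{e,i}^3\sigma_{e,i}^2$ term from the $\eta$ expansion. The error terms $c^{(n)}_{e,i}(r_n\theta)/r_n^2$ and $c^{(n)}_{s,j}(r_n\theta)/r_n^2$ vanish uniformly on $0<\|\theta\|\le K$ after dividing by a further $\|\theta\|$, precisely by Lemma~\ref{lem:uniform 1}.

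For \eqref{eq:limiting 2}, the difference $\E[f_{r_n\theta}^{(n)}(X^{(n)}(\infty))] - \varphi^{(n)}(\theta)$ equals $\E[e^{\br{r_n\theta,L^{(n)}(\infty)}}(e^{\sum_i \eta_i^{(n)}(r_n\theta_i)g^{(n)}(R_{e,i}^{(n)}(\infty)) + \sum_j \zeta_j^{(n)}(r_n\theta)g^{(n)}(R_{s,j}^{(n)}(\infty))} - 1)]$. Since $\theta < 0$, the factor $e^{\br{r_n\theta,L^{(n)}(\infty)}} \le 1$. The exponents in the second factor are bounded in absolute value by $\sum_i \hat c_{e,i}^{(n)}(r_nK)\|\theta\| g^{(n)}(R_{e,i}^{(n)}(\infty)) + \cdots$ via \eqref{eq:eta asymptotic 0} and \eqref{eq:zeta asymptotic 0}; using $|e^x - 1| \le |x|e^{|x|}$, the uniform Lipschitz bounds of Lemma~\ref{lem:uniform_slopes}, and the test function bound \eqref{eq:testf_bound}, this is dominated by a constant times $\big(\sum_i R_{e,i}^{(n)}(\infty) + \sum_j R_{s,j}^{(n)}(\infty)\big)$ — wait, one needs $g^{(n)} \le$ the identity — times a factor going to zero with the Lipschitz constants times $\|\theta\|$; actually the key is that $\eta_i^{(n)}(r_n\theta_i) = O(r_n)$ and $\zeta_j^{(n)}(r_n\theta) = O(r_n)$, so the exponent is $O(r_n g^{(n)}(R^{(n)}(\infty))) = O(r_n R^{(n)}(\infty))$ pointwise (or at worst $O(1)$ when $R^{(n)}$ is huge, but then the truncation $g^{(n)}$ caps it). Taking expectations and invoking uniform integrability of the residuals (Lemma~\ref{lem:residualUI}) then forces the bound to zero. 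The main obstacle here — and I expect this to be the hardest part of the whole lemma — is handling the truncation $g^{(n)}$ cleanly: on the event $\{R^{(n)} > 1/r_n\}$ the exponent need not be small, so one splits the expectation at $1/r_n$, bounds the large-residual event using uniform integrability (which controls $\E[R^{(n)}\mathbf 1(R^{(n)}>1/r_n)] \to 0$), and on the small-residual event uses $|e^x-1|\le |x|e^{|x|} \le C r_n R^{(n)}$.

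For \eqref{eq:limiting 3}, the same algebra applies but now with the extra indicator $\mathbf 1(L_j^{(n)}(\infty)=0)$ out front, and crucially with a division by $r_n$ rather than $r_n^2$. Here I would use Lemma~\ref{lem:boundary_probs}, which says $\Prob(L_j^{(n)}(\infty)=0) = r_n b_j/\lambda_{s,j}^{(n)} = O(r_n)$. So $\E[\mathbf 1(L_j^{(n)}(\infty)=0)(f_{r_n\theta}^{(n)} - e^{\br{\theta,Z^{(n)}(\infty)}})]$ is a quantity of size at most $\Prob(L_j^{(n)}(\infty)=0) \cdot \sup|f_{r_n\theta}^{(n)} - e^{\br{\theta,Z^{(n)}}}|$, but that crude bound only gives $O(r_n)$, not $o(r_n)$. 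Instead, one uses the decomposition from the \eqref{eq:limiting 2} argument to write the integrand as $e^{\br{\theta,Z^{(n)}(\infty)}}(\text{exponential factor} - 1)$ which is itself $o(1)$ as $n\to\infty$ (uniformly, by the residual UI argument), and then multiplying by $\mathbf 1(L_j^{(n)}(\infty)=0)$ and dividing by $r_n$ gives $\frac{1}{r_n}\Prob(L_j^{(n)}(\infty)=0)\cdot o(1) = \frac{b_j}{\lambda_{s,j}^{(n)}}\cdot o(1) \to 0$; here one should be slightly careful and use a conditional version, writing the expectation as $\Prob(L_j^{(n)}(\infty)=0)\,\E[\,\cdot\mid L_j^{(n)}(\infty)=0]$ and showing the conditional expectation is $o(1)$, which again follows from UI of residuals under the conditional law (this conditional UI is where one must lean on the argument structure in Section~\ref{app:bp+ru}). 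I would end by noting that all three convergences are uniform over $\theta < 0$ with $\|\theta\| \le K$ because every bound produced is of the stated form (a constant, independent of $\theta$ and $n$, times $\|\theta\|$ or $\|\theta\|^2$ times an $n$-dependent quantity tending to zero), completing the proof.
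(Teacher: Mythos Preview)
Your overall strategy matches the paper's, but there are two concrete gaps.

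\textbf{On \eqref{eq:limiting 1}.} Your handling of the linear terms is where the real difficulty lies, and your sketch does not resolve it. After substituting $r_n\theta$ into \eqref{eq:eta asymptotic 1}--\eqref{eq:zeta asymptotic 1} and dividing by $r_n^2$, the linear part is
\[
\frac{1}{r_n}\Big\langle \widetilde\lambda_e^{(n)} - \widetilde\lambda_s^{(n)} + P^{\rs T}\widetilde\lambda_s^{(n)},\ \theta\Big\rangle .
\]
If the coefficients were $\lambda_e^{(n)},\lambda_s^{(n)}$ (no tildes), this would equal $\br{-Rb,\theta}$ exactly by \eqref{eq:heavy 3}. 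But you carry tildes, and passing to the \emph{limiting} $\lambda_e,\lambda_s$ as you suggest gives zero, not $r_n\br{Rb,\theta}$. What is actually needed is that the truncated means approximate the untruncated ones at rate $o(r_n)$:
\[
\frac{1}{r_n}\big|\widetilde\lambda_{e,i}^{(n)} - \lambda_{e,i}^{(n)}\big| \to 0,\qquad
\frac{1}{r_n}\big|\widetilde\lambda_{s,j}^{(n)} - \lambda_{s,j}^{(n)}\big| \to 0.
\]
The paper obtains this from the uniform integrability of the squares \eqref{eq:uniarr}--\eqref{eq:uniser}, via
\(
\frac{1}{r_n}\E\big[T_{e,i}^{(n)}1(T_{e,i}^{(n)}>1/r_n)\big]
\le \E\big[(T_{e,i}^{(n)})^2 1(T_{e,i}^{(n)}>1/r_n)\big]\to 0.
\)
Without this step your linear term does not converge. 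The quadratic terms and Taylor remainders are fine as you describe.

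\textbf{On \eqref{eq:limiting 2}.} Your argument is essentially the paper's, but you make it harder than necessary. No splitting at $1/r_n$ is needed: since $g^{(n)}(R)\le R$ and $|\eta_i^{(n)}(r_n\theta_i)|\le \hat c_{e,i}^{(n)}(r_nK)\,r_n\|\theta\|$, the bound $|e^x-1|\le|x|e^{|x|}$ together with \eqref{eq:testf_bound} gives a dominant of order $r_n\,\E[R_{e,i}^{(n)}(\infty)]$ directly, and Lemma~\ref{lem:residualUI} finishes.

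\textbf{On \eqref{eq:limiting 3}.} Your route through the conditional expectation requires uniform integrability of the residuals \emph{under the conditional law} $\Prob(\,\cdot\mid L_j^{(n)}(\infty)=0)$, which is not what Lemma~\ref{lem:residualUI} gives and would need a separate argument. The paper avoids this entirely with a splitting trick at the intermediate scale $r_n^{-1/2}$: bound the integrand as in \eqref{eq:limiting 2} to get, up to constants,
\[
\E\big[1(L_j^{(n)}(\infty)=0)\,(R_{e,i}^{(n)}(\infty)\wedge 1/r_n)\big],
\]
then split on $\{R_{e,i}^{(n)}(\infty)>r_n^{-1/2}\}$ and its complement. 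On the first event drop the indicator and use unconditional UI to get $\E[R\,1(R>r_n^{-1/2})]\to 0$; on the second event bound the residual by $r_n^{-1/2}$ and use Lemma~\ref{lem:boundary_probs} to get $r_n^{-1/2}\cdot r_n b_j/\lambda_{s,j}^{(n)}=O(r_n^{1/2})\to 0$. This gives $o(1)$ before dividing by anything, and since the bound already contains no extra factor of $r_n$ in the numerator the required $\frac{1}{r_n}$ scaling is absorbed by the $\hat c\,K$ coming from $|\eta_i^{(n)}(r_n\theta_i)|\,g^{(n)}(\cdot)\le \hat c\,K$ (so the $1/r_n$ cancels against the $r_n$ in the Lipschitz estimate, leaving the displayed expectation).
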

The proof of this lemma is postponed until  Section~\appt{limiting 1}.

\subsection{Proof of \lem{prelimitMGFBAR}}
\label{sect:MGFBAR}
We now prove Proposition~\ref{lem:prelimitMGFBAR} using the auxiliary lemmas stated in the previous section. As a starting point, we multiply \eqref{eq:SE 1} by $-1/r_n^2$ to see that
\begin{align}
\label{eq:fstep}
0&= -\frac{1}{r_n^2} \dd{E}\big[f_{r_n \theta}^{(n)}(X^{(n)}(\infty))\big]\bigg(\sum_{i \in {\cal E}} \eta_{i}^{(n)}(r_n \theta_{i}) + \sum_{j \in \J} \zeta_{j}^{(n)}(r_n \theta) \bigg) \\
 &\quad + \frac{1}{r_n^2} \sum_{j \in \J} \zeta_{j}^{(n)}(r_n \theta) \dd{E}\big[1( L_{j}^{(n)}(\infty)=0) f_{r_n \theta}^{(n)}(X^{(n)}(\infty))\big]\notag \\ 
 & \quad  - \frac{1}{r_n^2}\sum_{j \in \J} \zeta_{j}^{(n)}(r_n \theta) 
\notag\\
& \qquad \qquad \times \dd{E}\big[1(R^{(n)}_{s,j}(\infty) \geq 1/{r}_{n}, L_{j}^{(n)}(\infty)=0) f_{r_n \theta}^{(n)}(X^{(n)}(\infty))\big] \notag \\ 
 & \quad  + \frac{1}{r_n^2}\sum_{i \in {\cal E}}  \eta_{i}^{(n)}(r_n \theta_{i}) \dd{E}\big[1(R^{(n)}_{e,i}(\infty) \geq 1/{r}_{n}) f_{r_n \theta}^{(n)}(X^{(n)}(\infty))\big] \notag  \\ 
&  \quad  +  \frac{1}{r_n^2}\sum_{j \in \J} \zeta_{j}^{(n)}(r_n \theta) \dd{E}\big[1(R^{(n)}_{s,j}(\infty) \geq  1/{r}_{n}) f_{r_n \theta}^{(n)}(X^{(n)}(\infty))\big]. \notag
\end{align}

We claim that the last three lines are negligible. For this, we wish to show that 
\begin{align*}
 &\lim_{n \to \infty} \sup_{\substack{\theta < 0 \\ 0 < \|\theta\| \leq M}}\ \frac{1}{\|\theta\|}  \frac{1}{r_n^2}\Big|\sum_{j \in \J} \zeta_{j}^{(n)}(r_n \theta) \dd{E}\big[1(R^{(n)}_{s,j}(\infty) \geq 1/{r}_{n}) f_{r_n \theta}^{(n)}(X^{(n)}(\infty))\big] \Big| \\
 &\quad = 0,
\end{align*}
then similar statements hold for the remaining two lines. Observe that for each $j \in \J$,
\begin{align*}
&\sup_{\substack{\theta < 0 \\ 0 < \|\theta\| \leq M}} \  \frac{1}{r_n^2 \|\theta\|} \Big| \zeta_{j}^{(n)}(r_n \theta) \dd{E}\big[1(R^{(n)}_{s,j}(\infty) \geq 1/{r}_{n}) f_{r_n \theta}^{(n)}(X^{(n)}(\infty))\big] \Big| \\ 
& \qquad \leq \hat{c}^{(n)}_{s,j}(r_nM) c_f(M)\frac{1}{r_n} \dd{P}(R^{(n)}_{s,j}(\infty) \geq 1/{r}_{n})
\end{align*}
by \eq{zeta asymptotic 0} and \eqref{eq:testf_bound}. By Lemma~\ref{lem:uniform_slopes} and \lem{residualUI}, this upper bound vanishes as $n \to \infty$.
Thus, we have succeeded in proving that
\begin{align*}
& \lim_{n \to \infty} \sup_{\substack{\theta < 0 \\ 0 < \|\theta\| \leq M}}\ \frac{1}{\|\theta\|} \bigg|\sum_{j \in \J} \frac{1}{r_n^2}\zeta_{j}^{(n)}(r_n \theta) \E \big[f_{r_n \theta}^{(n)}(X^{(n)}(\infty)) 1(L_{j}^{(n)}(\infty)=0)\big] \notag \\
  & \qquad  -\frac{1}{r_n^2}  \Big( \sum_{i \in {\cal E}}  \eta_{i}^{(n)}(r_n \theta_{i}) + \sum_{j \in \J} \zeta_{j}^{(n)}(r_n \theta)\Big) \E \big[f_{r_n \theta}^{(n)}(X^{(n)}(\infty))\big] \bigg|\\
  &\quad =  0.
\end{align*}
For the next step, we apply \eqref{eq:testf_bound} and \lem{limiting 1} to see that 
\begin{align*}
&\ \lim_{n \to \infty} \sup_{\substack{\theta < 0 \\ 0 < \|\theta\| \leq M}} \frac{1}{\|\theta\|} \bigg|-\frac{1}{r_n^2}\Big( \sum_{i \in {\cal E}}  \eta_{i}^{(n)}(r_n \theta_{i}) + \sum_{j \in \J} \zeta_{j}^{(n)}(r_n \theta)\Big) \\
& \qquad\qquad \times \E \big[f_{r_n \theta}^{(n)}(X^{(n)}(\infty))\big] - \gamma(\theta)\varphi^{(n)}(\theta)\bigg| \\
&\quad \leq\ \lim_{n \to \infty} \sup_{\substack{\theta < 0 \\ 0 < \|\theta\| \leq M}} \frac{1}{\|\theta\|} \bigg| -\frac{1}{r_n^2}\Big( \sum_{i \in {\cal E}}  \eta_{i}^{(n)}(r_n \theta_{i}) + \sum_{j \in \J} \zeta_{j}^{(n)}(r_n \theta)\Big) - \gamma(\theta) \bigg| \\
&\qquad\qquad \times \E \big[f_{r_n \theta}^{(n)}(X^{(n)}(\infty))\big] \\
&\qquad +\ \lim_{n \to \infty} \sup_{\substack{\theta < 0 \\ 0 < \|\theta\| \leq M}} \frac{\abs{\gamma(\theta) }}{\|\theta\|} \Big|\E \big[f_{r_n \theta}^{(n)}(X^{(n)}(\infty))\big] - \varphi^{(n)}(\theta)\Big| = 0.
\end{align*}
We arrive at the intermediate result
\begin{align}
&\lim_{n \to \infty} \sup_{\substack{\theta < 0 \\ 0 < \|\theta\| \leq M}}  \frac{1}{\|\theta\|} \bigg| \gamma(\theta) \varphi^{(n)}(\theta)  \label{eq:intermed_result} \\
& \qquad +  \sum_{j \in \J} \frac{1}{r_n^2}\zeta_{j}^{(n)}(r_n \theta) \E \big[f_{r_n \theta}^{(n)}(X^{(n)}(\infty))1(L_{j}^{(n)}(\infty)=0)\big] \bigg| = 0. \notag 
\end{align}
Recall the definition of $\varphi^{(n)}_j(\theta)$ from \eqref{eq:laplaceprelimit} and use the telescoping sum
\begin{align*}
& \lim_{n \to \infty} \sup_{\substack{\theta < 0 \\ 0 < \|\theta\| \leq M}}  \frac{1}{\|\theta\|}\Big| \frac{1}{r_n^2}\zeta_{j}^{(n)}(r_n \theta) \E \big[f_{r_n \theta}^{(n)}(X^{(n)}(\infty)) 1(L_{j}^{(n)}(\infty)=0)\big] \\
& \qquad \qquad -b_j \gamma_j(\theta) \varphi^{(n)}_j(\theta) \Big| \\
& \quad \leq \ \lim_{n \to \infty} \sup_{\substack{\theta < 0 \\ 0 < \|\theta\| \leq M}}  \frac{1}{\|\theta\|}\Big| \frac{1}{r_n^2}\zeta_{j}^{(n)}(r_n \theta) -\frac{b_j}{P(L_{j}^{(n)}(\infty)=0)} \gamma_j(\theta)\Big|\\
& \qquad \qquad \times\E \big[f_{r_n \theta}^{(n)}(X^{(n)}(\infty)) 1(L_{j}^{(n)}(\infty)=0)\big] \\
& \qquad + \lim_{n \to \infty} \sup_{\substack{\theta < 0 \\ 0 < \|\theta\| \leq M}}  \frac{1}{\|\theta\|}\frac{b_j}{P(L_{j}^{(n)}(\infty)=0)} \gamma_j(\theta) \\
& \qquad\qquad \times\Big|\E \Big[\big[f_{r_n \theta}^{(n)}(X^{(n)}(\infty)) - e^{\br{\theta,Z^{(n)}(\infty)}}\big] 1(L_{j}^{(n)}(\infty)=0)\Big] \Big|.
\end{align*}
Using \eqref{eq:intermed_result} we see that to complete the proof of Proposition~\ref{lem:prelimitMGFBAR}, all we need to do is show the upper bound above equals zero. To show that the first term is zero, we recall from Lemma~\ref{lem:boundary_probs} that
\begin{align*}
  \dd{P}(L_{j}^{(n)}(\infty)=0) = 1 - \lambda^{(n)}_{a,j}/\lambda^{(n)}_{s,j} = r_{n} b_{j}/\lambda^{(n)}_{s,j}, \quad j \in \J.
\end{align*}
Recalling the form of $\gamma_j(\theta)$ from \eqref{eq:gamma 1} and the bound $c_f(M)$ from \eqref{eq:testf_bound}, we see that
\begin{align*}
& \lim_{n \to \infty} \sup_{\substack{\theta < 0 \\ 0 < \|\theta\| \leq M}}\frac{1}{\|\theta\|}\Big| \frac{1}{r_n^2}\zeta_{j}^{(n)}(r_n \theta) -\frac{b_j}{\dd{P}(L_{j}^{(n)}(\infty)=0)} \gamma_j(\theta)\Big|\\
& \qquad \qquad  \times  \E \big[f_{r_n \theta}^{(n)}(X^{(n)}(\infty)) 1(L_{j}^{(n)}(\infty)=0)\big]\\
&\quad  = \lim_{n \to \infty} \sup_{\substack{\theta < 0 \\ 0 < \|\theta\| \leq M}}\frac{1}{\|\theta\|}\Big| \frac{\dd{P}(L_{j}^{(n)}(\infty)=0)}{r_n^2}\zeta_{j}^{(n)}(r_n \theta) -b_j \gamma_j(\theta)\Big|\\
& \qquad \qquad \times \E \big[f_{r_n \theta}^{(n)}(X^{(n)}(\infty))\big| L_{j}^{(n)}(\infty)=0\big] \\
& \quad \leq \ \lim_{n \to \infty} \sup_{\substack{\theta < 0 \\ 0 < \|\theta\| \leq M}}\frac{1}{\|\theta\|}\Big|\frac{b_j}{r_n \lambda^{(n)}_{s,j}}\zeta_{j}^{(n)}(r_n \theta)   -b_j \br{R^{(j)}, \theta}\Big|c_f(M).
\end{align*}
Furthermore,
\begin{align*}
& \lim_{n \to \infty} \sup_{\substack{\theta < 0 \\ 0 < \|\theta\| \leq M}}\frac{1}{\|\theta\|}\Big|\frac{b_j}{r_n \lambda^{(n)}_{s,j}}\zeta_{j}^{(n)}(r_n \theta)   -b_j \br{R^{(j)}, \theta}\Big|c_f(M)\\
&\quad  = \  \lim_{n \to \infty} \sup_{\substack{\theta < 0 \\ 0 < \|\theta\| \leq M}}  \frac{1}{\|\theta\|}\Big| \frac{\widetilde{\lambda}^{(n)}_{s,j}}{\lambda^{(n)}_{s,j}} b_j\br{R^{(j)},\theta}  -b_j \br{R^{(j)}, \theta}\Big|c_f(M) = 0,
\end{align*}
where the first equality is justified by \eqref{eq:lamdaconverge} together with the approximation of $\zeta_j^{(n)}(r_n\theta)$ from \eqref{eq:zeta asymptotic 1} and Lemma~\ref{lem:uniform 1}, and the second equality follows from \eqref{eq:lamdaconverge} and \eqref{eq:tildes_converge}. Now for the second term, Lemmas~\ref{lem:boundary_probs} and \ref{lem:limiting 1} tell us that for $j \in \J$,
\begin{align*}
\lim_{n \to \infty}  \sup_{\substack{\theta < 0 \\ \|\theta\| \leq M}} \frac{\Big| \E \big[1( L_{j}^{(n)}(\infty)=0)\big(f_{r_n \theta}^{(n)}(X^{(n)}(\infty))- e^{\br{\theta, {Z}^{(n)}(\infty)}}\big)\big] \Big|}{\dd{P}(L_{j}^{(n)}(\infty)=0)}  = 0.
\end{align*}
This concludes the proof of Proposition~\ref{lem:prelimitMGFBAR}.

\section{Tightness of Stationary Distributions: an Essential \blue{Proposition}}
\label{sect:tightness}
\setnewcounter 
This section is centered around the statement and proof of Proposition~\ref{lem:int_geq_boundary}, which is critical to proving that the sequence $\{Z^{(n)}(\infty),\ n \geq 1\}$ is tight. The tightness argument itself is provided as a part the proof of Theorem \ref{thr:heavy traffic 1} in \sectn{proofs}, and relies on both Propositions~\ref{lem:prelimitMGFBAR} and \ref{lem:int_geq_boundary}. We now motivate the proposition and introduce some notation needed to state it.

Let $\cal C$ be the class of functions $f$ that satisfy
\begin{enumerate}[(a)]
\item $f: \{\theta\le 0: \theta\in \R^d\} \to [0, 1]$
\item $f$ is continuous on $\{\theta< 0: \theta\in \R^d\}$
\item $\theta_1 \leq \theta_2 \Rightarrow f(\theta_1) \leq f(\theta_2)$,
\item $f(0)=1$.
\end{enumerate} 
Clearly, the MGF of any probability measure
on $\R^d_+$ belongs to ${\cal C}$. Suppose that $f$ is a pointwise
limit of a sequence of MGFs of probability measures. Then, $f\in
{\cal C}$. Such a pointwise limit is not necessarily the MGF of a
probability measure; for example, this happens when the sequence of measures is not tight. One can prove that the pointwise limit $f$ is an MGF of a probability measure if and only if $f$ is left continuous at $0$; see Lemma~\ref{LEM:LAPLACETIGHTNESS} in \sectn{proofs}. By left continuity, we mean
\begin{equation*}
  \label{eq:leftlimit}
  \lim_{\theta \uparrow 0} f(\theta) = 1,
\end{equation*}
where $\theta \uparrow 0$ means that $\theta \in \dd{R}^{d}$
approaches $0$ from left in arbitrary way.

In the tightness argument of Section~\ref{sect:proofs}, we deal with the sequence of MGFs corresponding to $\{Z^{(n)}(\infty),\ n \geq 1\}$. Loosely speaking, Proposition~\ref{lem:prelimitMGFBAR} tells us that the pointwise limit of every convergent subsequence of the MGFs satisfies the BAR of the SRBM \eqref{eq:srbmbar}. Proposition~\ref{lem:int_geq_boundary} then leverages the structure of this BAR to prove that the pointwise limit of the MGFs is left continuous at the origin, thereby proving tightness of the corresponding sequence of probability measures. This argument is made precise in Section~\ref{sect:proofs}. Having sufficiently motivated the necessity of Proposition~\ref{lem:int_geq_boundary}, we now continue with its setup. 

For our purposes, it will be beneficial
to take limits as $\theta \uparrow 0 $ along rays stemming from the origin. Each ray corresponds to some direction vector $\vc{c}_{A} = (c_1, \ldots ,
c_d)^{\rs{T}}$, where  $A \subset \J$ and  $c_i = 0$ when $i \notin A$. We write $\vc{c}_{A} > 0 $
if $c_i > 0$ when $i \in A$. For a fixed ray $\vc{c}_{A}>0$, we
consider the limits
\begin{align*}
 \lim_{\alpha \uparrow 0} f(\alpha\vc{c}_{A}), \quad \alpha \in \R,
\end{align*}
which always exist by the monotonicity of $f$. We note the following fact: 
\begin{lemma}
\label{lem:ray independent}
Fix $A \subset \J$ and consider any two direction vectors $\vc{c}_{A} >0 $ and $\tilde{\vc{c}}_{A} >0$.  For any function $f \in \cal C$,  
\begin{align*}
\lim_{\alpha \uparrow 0} f(\alpha\vc{c}_{A}) = \lim_{\alpha \uparrow 0} f(\alpha\tilde{\vc{c}}_{A}).
\end{align*}
\end{lemma}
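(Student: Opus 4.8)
The plan is to sandwich the two limits between each other using only the monotonicity of $f$ (property (c)) and the fact that both direction vectors are strictly positive on $A$ and zero off $A$. Fix $A \subset \J$ and let $\vc{c}_A > 0$ and $\tilde{\vc{c}}_A > 0$ be two admissible direction vectors. Since both have strictly positive components exactly on the index set $A$, there exist constants $0 < m \le M < \infty$ such that $m\, \tilde{c}_i \le c_i \le M\, \tilde{c}_i$ for every $i \in A$; concretely, one may take $m = \min_{i \in A} c_i/\tilde c_i$ and $M = \max_{i \in A} c_i/\tilde c_i$. Because $c_i = \tilde c_i = 0$ for $i \notin A$, these inequalities hold coordinatewise on all of $\J$, i.e.\ $m\,\tilde{\vc{c}}_A \le \vc{c}_A \le M\,\tilde{\vc{c}}_A$ as vectors in $\R^d$.

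Next I would exploit the sign of $\alpha$. For $\alpha < 0$, multiplying the vector inequality $m\,\tilde{\vc{c}}_A \le \vc{c}_A \le M\,\tilde{\vc{c}}_A$ by $\alpha$ reverses the inequalities, giving
\begin{align*}
\alpha M\,\tilde{\vc{c}}_A \le \alpha\,\vc{c}_A \le \alpha m\,\tilde{\vc{c}}_A, \qquad \alpha < 0.
\end{align*}
Since $f$ is non-decreasing in the coordinatewise order (property (c)), this yields
\begin{align*}
f(\alpha M\,\tilde{\vc{c}}_A) \le f(\alpha\,\vc{c}_A) \le f(\alpha m\,\tilde{\vc{c}}_A), \qquad \alpha < 0.
\end{align*}
Now let $\alpha \uparrow 0$. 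Because $m > 0$ and $M > 0$ are fixed, $\alpha M \uparrow 0$ and $\alpha m \uparrow 0$ as well, so both the left- and right-hand limits equal $\lim_{\beta \uparrow 0} f(\beta\,\tilde{\vc{c}}_A)$, which exists by monotonicity (as noted in the text just before the lemma). The squeeze then forces $\lim_{\alpha \uparrow 0} f(\alpha\,\vc{c}_A)$ to exist and to equal $\lim_{\alpha \uparrow 0} f(\alpha\,\tilde{\vc{c}}_A)$, which is the claim.

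There is essentially no hard step here: the only thing to be careful about is the sign flip when multiplying the vector inequality by the negative scalar $\alpha$, and the observation that $A$ being the common support of the two direction vectors is exactly what makes the ratios $c_i/\tilde c_i$ well-defined and bounded above and below by positive constants. The monotonicity property (c) of the class $\cal C$ does all the remaining work; continuity of $f$ on $\{\theta < 0\}$ and the normalization $f(0) = 1$ are not even needed for this particular lemma.
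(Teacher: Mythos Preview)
Your proof is correct and follows essentially the same argument as the paper: both sandwich one direction vector between positive scalar multiples of the other and use monotonicity plus a squeeze. If anything, you are more explicit than the paper about the sign flip when multiplying by $\alpha<0$, which the paper leaves implicit.
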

\begin{proof}
Since $\vc{c}_{A} >0 $ and $\tilde{\vc{c}}_{A} >0$, there exist constants $m, M >0$ such that 
\begin{align*}
m \vc{c}_{A} \leq \tilde{\vc{c}}_{A}\leq M \vc{c}_{A}.
\end{align*}
Using the monotonicity of $f(\theta)$,
\begin{align*}
\lim_{\alpha \uparrow 0} f(\alpha \vc{c}_{A}) = \lim_{\alpha \uparrow 0} f(\alpha M \vc{c}_{A}) \leq  \lim_{\alpha \uparrow 0} f(\alpha\tilde{\vc{c}}_{A}) \leq \lim_{\alpha \uparrow 0} f(\alpha m \vc{c}_{A}) = \lim_{\alpha \uparrow 0} f(\alpha \vc{c}_{A}).
\end{align*}
\end{proof}
In view of this lemma, we write 
\begin{align*}
f({0}_{A}-) \equiv \lim_{\alpha \uparrow 0} f(\alpha\vc{c}_{A}).
\end{align*} 
The following proposition is one of the main tools used to prove Theorem \ref{thr:heavy traffic 1}. It will allow us to show that whenever the sequence of the MGFs of $Z^{(n)}(\infty)$ has a limit, it must be left continuous at 0.
\begin{proposition}
\label{lem:int_geq_boundary}
Assume $\psi\in {\cal C}$ and $\{\psi_j, j \in \J\} \subset {\cal C}$ satisfy
\begin{align}
  \gamma(\theta) \psi(\theta) + \sum_{j \in \J} b_j\gamma_j(\theta)  \psi_{j}(\theta) = 0, \quad \text{ for }{\theta} \le {0}, \label{eq:limitBAR}
\end{align}
\blue{and that $\psi_j(\theta)$ is independent of $\theta_j$.} For any $A \subset \J$,
\begin{align} \label{eq:A 3}
\psi({0}_{A}-) \geq \psi_{j}({0}_{A}-), \quad  j \in \J.
\end{align}
Furthermore, when $A = \J$ we have
\begin{align} \label{eq:A 3.1}
\psi({0}_{\J}-) = \psi_{j}({0}_{\J}-), \quad j \in \J.
\end{align}
\end{proposition}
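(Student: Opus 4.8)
The plan is to feed the BAR \eqref{eq:limitBAR} with arguments approaching the origin along rays, extract a linear identity for the ray-limits $\psi(0_A-)$ and $\psi_j(0_A-)$, and then run a downward induction on $|A|$ that exploits the ${\cal M}$-matrix structure of $R$. First, fix a nonempty $A\subseteq\J$ and a direction $\vc{c}_A>0$ supported on $A$. For $\alpha<0$ I would substitute $\theta=\alpha\vc{c}_A$ into \eqref{eq:limitBAR}; since $\gamma(\alpha\vc{c}_A)=\frac{\alpha^2}{2}\langle\vc{c}_A,\Sigma\vc{c}_A\rangle-\alpha\langle Rb,\vc{c}_A\rangle$ and $\gamma_j(\alpha\vc{c}_A)=\alpha\langle R^{(j)},\vc{c}_A\rangle$, dividing by $\alpha$ and letting $\alpha\uparrow0$ kills the second-order term and, using $\psi(\alpha\vc{c}_A)\to\psi(0_A-)$ and $\psi_j(\alpha\vc{c}_A)\to\psi_j(0_A-)$ (limits existing by monotonicity), leaves
\[
\langle Rb,\vc{c}_A\rangle\,\psi(0_A-)=\sum_{j\in\J}b_j\langle R^{(j)},\vc{c}_A\rangle\,\psi_j(0_A-).
\]
Writing $\Delta_j:=\psi(0_A-)-\psi_j(0_A-)$ and using $\langle Rb,\vc{c}_A\rangle=\sum_j b_j\langle R^{(j)},\vc{c}_A\rangle$, this becomes $\sum_j b_j\langle R^{(j)},\vc{c}_A\rangle\,\Delta_j=0$. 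By \lem{ray independent} the values $\psi(0_A-)$ and $\psi_j(0_A-)$ do not depend on $\vc{c}_A$, so this identity holds for all $\vc{c}_A$ in the open positive orthant of $\R^A$; expanding $\langle R^{(j)},\vc{c}_A\rangle=\sum_{i\in A}R_{ij}(\vc{c}_A)_i$ and using that a linear functional vanishing on a nonempty open set is zero, I collect the coefficient of each $(\vc{c}_A)_i$ to obtain
\[
\sum_{j\in\J}b_jR_{ij}\,\Delta_j=0,\qquad i\in A.\qquad\qquad(\star)
\]
Recalling $R=I-P^{\rs{T}}$, the $i$-th equation of $(\star)$ reads $b_i\Delta_i=\sum_{j\in\J}b_jp_{ji}\Delta_j$.

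For the base case $A=\J$, identity $(\star)$ holds for every $i\in\J$, i.e.\ $R(B\Delta)=0$ with $B=\mathrm{diag}(b)$; invertibility of $R$ together with $b>0$ forces $\Delta=0$, which is precisely \eqref{eq:A 3.1} and a fortiori gives \eqref{eq:A 3} for $A=\J$. I would then prove \eqref{eq:A 3} for general $A$ by downward induction on $|A|$, the case $|A|=d$ being this base case and the case $A=\emptyset$ being trivial (both sides equal $1$).

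For the inductive step, fix $\emptyset\ne A\subsetneq\J$ and assume \eqref{eq:A 3} holds for every set strictly larger than $A$. When $j\notin A$, independence of $\psi_j$ from $\theta_j$ gives $\psi_j(0_A-)=\psi_j(0_{A\cup\{j\}}-)$, the induction hypothesis applied to $A\cup\{j\}$ with boundary index $j$ gives $\psi_j(0_{A\cup\{j\}}-)\le\psi(0_{A\cup\{j\}}-)$, and monotonicity of $\psi$ gives $\psi(0_{A\cup\{j\}}-)\le\psi(0_A-)$; chaining these inequalities yields $\Delta_j\ge0$ for all $j\notin A$. When $i\in A$, the $i$-th equation of $(\star)$ gives $b_i\Delta_i=\sum_{j\in\J}b_jp_{ji}\Delta_j\ge\sum_{j\in A}b_jp_{ji}\Delta_j$ after dropping the nonnegative terms with $j\notin A$; in matrix form over the index set $A$ this reads $R_A(B_A\Delta_A)\ge0$ componentwise, where $R_A=I-(P_A)^{\rs{T}}$ is the principal submatrix of $R=I-P^{\rs{T}}$ indexed by $A$, and $B_A$, $\Delta_A$ are the corresponding restrictions. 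Since a principal submatrix of an ${\cal M}$-matrix is again an ${\cal M}$-matrix, $R_A^{-1}\ge0$ componentwise, so $B_A\Delta_A=R_A^{-1}\bigl(R_A(B_A\Delta_A)\bigr)\ge0$, and hence $\Delta_i\ge0$ for all $i\in A$. Combining the two cases yields $\Delta_j\ge0$ for all $j\in\J$, which is \eqref{eq:A 3}.

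The main obstacle will be the passage in the first step from the one-parameter family of identities (one for each direction $\vc{c}_A$) to the single pointwise identity $(\star)$; this is exactly where \lem{ray independent} is essential, since it lets me keep $\psi(0_A-)$ and $\psi_j(0_A-)$ fixed while varying $\vc{c}_A$. The remaining ingredients are routine: only $b>0$, the invertibility of $R$, the fact that principal submatrices of an ${\cal M}$-matrix have nonnegative inverses, and the hypothesis that $\psi_j$ does not depend on $\theta_j$ enter the argument. In particular, positive-definiteness of $\Sigma$ is not needed here, since the second-order term of $\gamma$ drops out after division by $\alpha$.
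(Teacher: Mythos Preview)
Your proof is correct and follows essentially the same approach as the paper: substitute $\theta=\alpha\vc{c}_A$ into the BAR, divide by $\alpha$ and send $\alpha\uparrow0$ to obtain the linear identity $(\star)$, then run a downward induction on $|A|$ using the ${\cal M}$-matrix structure of $R$ and the independence of $\psi_j$ from $\theta_j$. The only cosmetic differences are that the paper extracts $(\star)$ by sending the auxiliary coordinates $c_k\downarrow0$ rather than invoking the open-set argument for linear functionals, and that it multiplies by $(R^A)^{-1}$ before separating the $j\in A$ and $j\notin A$ contributions, whereas you first bound the $j\notin A$ terms and then invert; the logical content is identical.
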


\begin{proof}
Recall the definitions of $\gamma(\theta)$ and $\gamma_j(\theta)$ from \eqref{eq:gamma 1}. For any $A \subset \J$, $\vc{c}_{A} > {0}$, and $\alpha < 0$, we set ${\theta} = \alpha \vc{c}_{A}$ in \eqref{eq:limitBAR} to get
\begin{align*}
  \Big( \frac 12 \alpha^{2} \br{\vc{c}_{A}, \Sigma \vc{c}_{A}} - \alpha \sum_{j \in \J} b_j \br{{\vc{c}_{A}}, R^{(j)}} \Big) \psi(\alpha \vc{c}_{A}) + \alpha \sum_{j \in \J}  b_{j}  \br{\vc{c}_{A}, R^{(j)}} \psi_{j}(\alpha \vc{c}_{A}) = 0. 
\end{align*}
We divide both sides by $\alpha$ and let $\alpha  \uparrow 0$, which yields
\begin{align*}
  - \sum_{j \in \J} \br{\vc{c}_{A}, R^{(j)}} b_{j} \psi({0}_{A}-) + \sum_{j \in \J} \br{\vc{c}_{A}, R^{(j)}} b_{j} \psi_{j}({0}_{A}-) = 0. 
\end{align*}
By \lem{ray independent},  $\vc{c}_{A} = (c_1, ... , c_d)^{\rs{T}}$ can be arbitrary as long as $\vc{c}_{A} > {0}$. For each fixed $i \in A$, we set $c_i = 1$ and let $c_k \downarrow 0$ for $k \in A \setminus \{i\}$. We arrive at
\begin{align*}
  \sum_{j \in \J} r_{ij} b_{j} & \big( \psi({0}_{A}-) - \psi_{j}({0}_{A}-) \big) = 0, \quad i \in A,
\end{align*}
where $r_{ij}$ is the $(i,j)$th entry of the reflection matrix $R$. Next, we split the summation in this formula into two parts:
\begin{align}
\label{eq:BAR 4}
 & \sum_{j \in A} r_{ij} b_{j}  \big( \psi({0}_{A}-) - \psi_{j}({0}_{A}-) \big) \\
  & \qquad + \sum_{j \in \J \setminus A} r_{ij} b_{j} \big( \psi({0}_{A}-) - \psi_{j}({0}_{A}-) \big) = 0, \quad i \in A. \nonumber 
\end{align}
We consider these equations as an $|A|$-dimensional vector equation. Let $R^{A}$ be the principal sub-matrix of $R$ whose entry indices are taken from $A$. Since $R$ is an $\sr{M}$-matrix, $R^{A}$ is also an $\sr{M}$-matrix, so it has an inverse whose entries are all nonnegative. Denote the $(i,j)$th entry of the inverse of $R^{A}$ by $\widetilde{r}^{A}_{ij}$. Multiplying the vector version of \eq{BAR 4} from the left by $(R^{A})^{-1}$, we have
\begin{align}
\label{eq:BAR 5}
  & b_{k}  \big( \psi({0}_{A}-) - \psi_{k}({0}_{A}-) \big) \\
  & \qquad + \sum_{i \in A} \sum_{j \in \J \setminus A} \widetilde{r}^{A}_{ki} r_{ij} b_{j} \big( \psi({0}_{A}-) - \psi_{j}({0}_{A}-) \big) = 0, \qquad k \in A. \nonumber 
\end{align}
Set $A = \J \setminus B$, where $B \subset \J$ and $B \ne \J$. Using induction on the size of the set $B$, we will show that 
\begin{align*}
  \psi({0}_{\J \setminus B}-) - \psi_{k}({0}_{\J \setminus B}-) \ge 0, \quad k \in \J.
\end{align*}
We first take $B = \emptyset$, meaning $A=\J$. Since the summation in \eq{BAR 5} vanishes, we have
\begin{align}
\label{eq:A 1}
  \psi({0}-) = \psi_{k}({0}-), \quad k \in \J,
\end{align}
which proves \eqref{eq:A 3.1}. Hence, the base case is true and we now justify the inductive step.
For a general set $B \subset \J$, \eq{BAR 5} becomes
\begin{align}
\label{eq:BAR 6}
  & b_{k} \big( \psi({0}_{\J \setminus B}-) - \psi_{k}({0}_{\J \setminus B}-) \big) \\
  &\qquad + \sum_{i \in \J \setminus B} \sum_{j \in B} \widetilde{r}^{\J \setminus B}_{ki} r_{ij} b_{j} \big( \psi({0}_{\J \setminus B}-) - \psi_{j}({0}_{\J \setminus B}-) \big) = 0 
\nonumber 
\end{align}
for each $k \in \J \setminus B$.
For $j \in B$ we have
\begin{align*}
\ \psi({0}_{\J \setminus B}-) - \psi_{j}({0}_{\J \setminus B}-) =&\  \psi({0}_{\J \setminus B}-) - \psi_{j}({0}_{\J \setminus (B \setminus \{j\})}-) \\
   \geq &\ \psi({0}_{\J \setminus (B \setminus \{j\})}-) - \psi_{j}({0}_{\J \setminus (B \setminus \{j\})}-),
\end{align*}
where 
\begin{align*}
\psi({0}_{\J \setminus B}-) \geq \psi({0}_{\J \setminus (B \setminus \{j\})}-)
\end{align*}
follows \blue{because $\psi\in {\cal C}$}. We now assume that 
\begin{align*}
 \psi({0}_{\J \setminus (B \setminus \{j\})}-) - \psi_{k}({0}_{\J \setminus (B \setminus \{j\})}-)  \geq 0, \quad k \in \J
\end{align*} 
to immediately obtain 
\begin{align} \label{eq:geq_first_part}
\psi({0}_{\J \setminus B}-) - \psi_{j}({0}_{\J \setminus B}-) \geq 0, \quad j \in B.
\end{align}
Note that $r_{ij} \le 0$ for $j \neq i$ and $\widetilde{r}^{\J \setminus B}_{ki} \ge 0$ because  $R$ and $R^{A}$ are $\sr{M}$-matrices. Hence, 
\begin{align*}
\widetilde{r}^{\J \setminus B}_{ki} r_{ij} b_{j} \big( \psi({0}_{\J \setminus B}-) - \psi_{j}({0}_{\J \setminus B}-) \big) \leq 0, \quad j \in B,
\end{align*}
which together with \eqref{eq:BAR 6} immediately gives us
\begin{align} \label{eq:geq_second_part}
\psi({0}_{\J \setminus B}-) - \psi_{k}({0}_{\J \setminus B}-) \geq 0, \quad k \in \J \setminus B.
\end{align}
We combine \eqref{eq:geq_first_part} and \eqref{eq:geq_second_part} to complete the proof.
\end{proof}

\section{Proof of \thr{heavy traffic 1}}
\label{sect:proofs}
\setnewcounter

For $n \geq 1$, recall that $\varphi^{(n)}({\theta})$ and $\varphi^{(n)}_{j}({\theta})$ are MGFs defined in \eqref{eq:laplaceprelimit}. 
To prove Theorem \ref{thr:heavy traffic 1}, it suffices to prove
\begin{align}
  \label{eq:mgfconv}
  & \lim_{n\to\infty} \varphi^{(n)}({\theta}) = \varphi(\theta) \\
  &  \lim_{n\to\infty} \varphi^{(n)}_j({\theta}) = \varphi_j(\theta), \quad j\in  \J, \qquad \text{ for }  \theta \le 0, \notag
\end{align}
where $\varphi$ and $\varphi_j$ are MGFs defined in (\ref{eq:mgfsrbmdef}).

We now state a lemma that will be used frequently in this section.
Its proof is given in  \app{laplacetightness}.

\begin{lemma}
\label{LEM:LAPLACETIGHTNESS}  
Let $\{\nu^{(n)}, n \geq 1 \}$ be a sequence of probability measures on $\R_+^d$ with corresponding MGFs 
\begin{align*}
f^{(n)}(\theta) = \int_{\R_+^{d}} e^{\br{\theta, x}} \nu^{(n)}(dx), \quad \text{ for $\theta \in \R^d$ with $\theta \leq 0$}.
\end{align*}
Suppose that there exists a function $f(\theta)$ such that
\begin{align*}
f^{(n)}(\theta) \to f(\theta) \quad \text{ pointwise for all $\theta \in \R^d$ with $\theta \leq 0$}.
\end{align*}

\begin{enumerate}[(a)]
\item \label{eq:iclaimtightness} $f(0_\J-) = 1$ if and only if $\{\nu^{(n)}, n \geq 1 \}$ is tight.
\item \label{eq:iiclaimtightness} If $f(0_\J-) = 1$, then $f(\theta)$ is the  MGF of some probability measure $\nu$ on $\R_+^d$, which immediately implies that 
\begin{align*}
\nu^{(n)} \Rightarrow \nu, \quad \text{ as $n \to \infty$}.
\end{align*}  
\end{enumerate}
\end{lemma}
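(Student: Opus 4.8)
The plan is to prove part \eqref{eq:iclaimtightness} first, then deduce part \eqref{eq:iiclaimtightness} from it together with a standard weak-convergence argument. For part \eqref{eq:iclaimtightness}, the ``if'' direction is the easy one: if $\{\nu^{(n)}\}$ is tight, then along any subsequence there is a further subsequence converging weakly to some probability measure $\nu$, and since the test functions $x\mapsto e^{\br{\theta,x}}$ are bounded and continuous on $\R_+^d$ for $\theta\le 0$, the MGFs converge to the MGF of $\nu$; by assumption the pointwise limit is $f$, so $f$ is the MGF of $\nu$, whence $\lim_{\alpha\uparrow 0} f(\alpha e) = \lim_{\alpha\uparrow 0}\int e^{\alpha\br{e,x}}\nu(dx) = 1$ by monotone (or dominated) convergence, i.e. $f(0_\J-)=1$. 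For the ``only if'' direction, I would argue the contrapositive: if $\{\nu^{(n)}\}$ is not tight, there is $\varepsilon>0$ and a subsequence along which $\nu^{(n)}(\{x : \|x\|_\infty > R\}) \ge \varepsilon$ for every $R$; I want to convert this escape of mass into a strict drop in the left limit of $f$ at $0$. The cleanest device is: for $\theta = \alpha e$ with $\alpha<0$, write $f^{(n)}(\alpha e) = \int e^{\alpha\br{e,x}}\nu^{(n)}(dx) \le e^{\alpha R}\cdot 1 + 1\cdot(1-\varepsilon')$ after splitting on $\{\br{e,x}\le R\}$ versus its complement and using that a non-tight sequence leaks at least $\varepsilon'$ mass past any level $R$ of the (one-dimensional, nonnegative) functional $\br{e,x}$; letting $n\to\infty$ then $\alpha\uparrow 0$ carefully (choosing $R$ large first) gives $f(0_\J-) \le 1-\varepsilon' < 1$. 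One has to be slightly careful that ``not tight in $\R_+^d$'' implies leakage of the scalar $\br{e,x}$; this holds because $\|x\|_\infty \le \br{e,x} \le d\|x\|_\infty$ on $\R_+^d$, so non-tightness of $\nu^{(n)}$ is equivalent to non-tightness of the pushforwards under $x\mapsto\br{e,x}$.

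For part \eqref{eq:iiclaimtightness}, assume $f(0_\J-)=1$. By part \eqref{eq:iclaimtightness} the sequence $\{\nu^{(n)}\}$ is tight, so by Prokhorov's theorem every subsequence has a weakly convergent sub-subsequence, with limit some probability measure on $\R_+^d$. As in the ``if'' direction above, each such weak limit has MGF equal to $f$ on $\{\theta\le 0\}$. Since an MGF restricted to a neighborhood of the origin (indeed on all of $\{\theta\le 0\}$ here, which contains a $d$-dimensional open set in the half-space) determines a probability measure on $\R_+^d$ uniquely — a measure on $\R_+^d$ is determined by its Laplace transform on $\{\theta\le 0\}$ — all subsequential weak limits coincide with a single $\nu$; hence the whole sequence $\nu^{(n)}\Rightarrow\nu$, and $f$ is the MGF of $\nu$. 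This uniqueness of the Laplace transform on $\R_+^d$ is the place to cite \cite{Kall2001} (replacing the earlier reference \cite{StadTrau1981} as noted in the introduction).

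The main obstacle, and the only genuinely non-routine step, is the ``only if'' direction of part \eqref{eq:iclaimtightness}: turning the failure of tightness into a quantitative deficit $f(0_\J-) < 1$. The subtlety is the order of limits — one must first fix a large truncation level $R$ (exploiting that the subsequence leaks a fixed amount of mass past level $R$ for \emph{every} $R$, with an amount of leaked mass bounded below by the \emph{same} $\varepsilon'$), then send $n\to\infty$ to pass to the limit $f$, and only then send $\alpha\uparrow 0$; doing this carefully yields $f(0_\J-)\le 1-\varepsilon'$. Everything else is bookkeeping with monotonicity of MGFs and bounded/dominated convergence.
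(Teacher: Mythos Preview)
Your argument is correct, but the paper's route for part \eqref{eq:iclaimtightness} is more direct and avoids several of the tools you invoke. The paper handles both implications with a single splitting: writing $\theta=-\delta\mathbf{1}$ and decomposing $f^{(n)}(-\delta\mathbf{1})$ over $\{\|x\|_\infty\le M\}$ and its complement gives simultaneously the lower bound $f^{(n)}(-\delta\mathbf{1})\ge e^{-\delta Md}\,\nu^{(n)}\{\|x\|_\infty\le M\}$ and the upper bound $f^{(n)}(-\delta\mathbf{1})\le \nu^{(n)}\{\|x\|_\infty\le M\}+e^{-\delta M}$. The first yields ``tight $\Rightarrow f(0_\J-)=1$'' by taking $M$ large then $\delta$ small; the second yields $f(-\delta\mathbf{1})-e^{-\delta M}\le\liminf_n\nu^{(n)}\{\|x\|_\infty\le M\}$, from which ``$f(0_\J-)=1\Rightarrow$ tight'' follows by first choosing $\delta$ small so that $f(-\delta\mathbf{1})>1-\epsilon/2$, then $M$ large so that $e^{-\delta M}<\epsilon/2$. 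No Prokhorov, no contrapositive, no subsequence extraction or diagonalization is needed; in particular the paper never has to justify the existence of a single subsequence that leaks a fixed mass past \emph{every} level $R$ (a step you correctly flag as delicate, and which does require a short diagonal argument to make rigorous).

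For part \eqref{eq:iiclaimtightness} the two approaches essentially coincide: the paper simply cites \cite[Theorem 5.22]{Kall2001}, which packages exactly the Prokhorov-plus-Laplace-uniqueness argument you sketch. Your explicit unpacking is fine and the citation of \cite{Kall2001} for uniqueness of the Laplace transform on $\R_+^d$ is the right place. The upshot: your proof is valid, but the paper's treatment of \eqref{eq:iclaimtightness} is both shorter and more self-contained, trading your appeal to Prokhorov's theorem and subsequence bookkeeping for two elementary inequalities coming from one integral split.
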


\begin{proof}[Proof of Theorem~\ref{thr:heavy traffic 1}]

\dgreen{It follows from the proof of Helly's selection principle \cite[Theorem 8.1]{Gut2005}} that for every subsequence $\{n'\} \subset \{n\}_{n=1}^{\infty}$, there exists a further subsequence $\{n''\}$ and some non-decreasing functions $\psi(\theta)$ and $\psi_{j}(\theta)$, such that for every $\theta \le 0$, 
\begin{align}
\varphi^{(n'')}(\theta) \to \psi(\theta) \quad \text{ and } \quad \varphi_{j}^{(n'')}(\theta) \to \psi_{j}(\theta), \quad j \in \J. \label{eq:laplacelimit}
\end{align}
Both $\psi(\theta)$ and $\psi_{j}(\theta)$ are functions in
${\cal C}$, which was introduced in \sectn{tightness}. In particular we note that $\psi(0) = \psi_j(0)=1$. Furthermore, Proposition~\ref{lem:prelimitMGFBAR} implies that $\psi(\theta)$ and $\psi_{j}(\theta)$ satisfy BAR (\ref{eq:limitBAR}). Suppose we know that the limiting functions $\psi$ and $\psi_j$ satisfy
\begin{align}
& \psi(0_\J-) = 1, \label{eq:mgfcon}\\
& \psi_{j}(0_\J-) = 1, \quad j \in \J. \label{eq:tightnesscond}
\end{align}
Then Lemma~\ref{LEM:LAPLACETIGHTNESS}, BAR \eqref{eq:limitBAR} and the MGF version
of Lemma~\ref{lem:barneccsuff} imply that
$\psi(\theta)=\varphi(\theta)$ and
$\psi_{j}(\theta)=\varphi_j(\theta)$. Since $\{n'\}$ was an arbitrary
subsequence, we have proved (\ref{eq:mgfconv}), and consequently Theorem \ref{thr:heavy traffic 1}.  

In the remainder of the proof, we prove (\ref{eq:mgfcon}), which implies (\ref{eq:tightnesscond}) by \eqref{eq:A 3.1}.  Let $A = \{j\}$, then \eqref{eq:A 3} implies
\begin{align}
\label{eq:A 4}
  \psi({0}_{\{j\}}-) - \psi_{j}({0}_{\{j\}}-) \ge 0, \quad j \in \J.
\end{align}
Since $\psi_{j}({0}_{\{j\}}-) = \psi_{j}(0) = 1$, we have
\begin{align}
\psi({0}_{\{j\}}-) = 1, \quad j \in \J. \label{eq:firstseed}
\end{align}
Recall that 
\begin{align*}
Z^{(n'')}(\infty) =  ({Z}^{(n'')}_{1}(\infty), \ldots , {Z}^{(n'')}_{d}(\infty))
\end{align*}
is the scaled steady state queue length vector of the $n''$th GJN. Applying Lemma~\ref{LEM:LAPLACETIGHTNESS} to \eqref{eq:firstseed}, we see that $\{{Z}^{(n'')}_{j}(\infty)\}$ is tight for each $j \in \J$. Since all the marginals are tight, $\{{Z}^{(n'')}(\infty)\}$ is tight as well. We invoke Lemma~\ref{LEM:LAPLACETIGHTNESS} again to conclude that  (\ref{eq:mgfcon}) holds.
\end{proof}

\section{Concluding remarks}
\label{app:concluding}
\setnewcounter
To summarize, this paper uses a novel approach to justify the steady-state diffusion approximation of GJNs. Our method does not rely on first using the process limit $Z^{(n)}(\cdot) \Rightarrow Z(\cdot)$, followed by the tightness of $\{Z^{(n)}(\infty),\ n \geq 1\}$ as in \cite{GamaZeev2006} or \cite{BudhLee2009}. Rather, we work directly with the basic adjoint relationships of the GJN and the SRBM, which characterize their respective stationary distributions. By applying the BAR \eqref{eq:hard_BAR} of the GJN to a carefully designed exponential test function, we show in Proposition~\ref{lem:prelimitMGFBAR} that the sequence of moment generating functions of the GJN (together with a corresponding sequence of boundary measures) asymptotically satisfies the BAR \eqref{eq:srbmbar} of the SRBM. 

In addition to Proposition~\ref{lem:prelimitMGFBAR}, we also require tightness of the sequence $\{Z^{(n)}(\infty),\ n \geq 1\}$ like in \cite{GamaZeev2006} and \cite{BudhLee2009}. However, unlike the previous two papers, we do not rely on constructing Lyapunov functions to prove this tightness. Our tightness argument from Section~\ref{sect:proofs}, relies on algebraic manipulations of the BAR \eqref{eq:srbmbar} of the SRBM, the bulk of which are presented as Proposition~\ref{lem:int_geq_boundary}. A critical condition for this proposition is that the reflection matrix of the SRBM is an $\cal M$-matrix\blue{, and in particular, that its diagonal entries are non-negative and off diagonal entries are non-positive. }

An important direction for future research is generalizing the methodology presented in this paper to the multiclass queueing network setting, where new sources of difficulty emerge. One source of difficulty is the need to handle state space collapse. In a multiclass network with $d$ stations, the vector of queue lengths is of a higher dimension than $d$, while the approximating diffusion process remains $d$-dimensional. This will manifest itself as an extra error term on the right hand side of \eqref{eq:prelimBAR} in Proposition~\ref{lem:prelimitMGFBAR}, and we would need to have some way to deal with it. Another source of difficulty is to relax the $\cal M$-matrix currently needed to prove tightness, as the ${\cal M}$-matrix structure is a feature unique to GJNs. \blue{Preliminary experiments with several multiclass networks suggest that a variant of Proposition~\ref{lem:int_geq_boundary} can be established even when the reflection matrix is not an ${\cal M}$-matrix. However, it remains an open problem to find the minimal conditions on the reflection matrix under which our approach can be carried out.}

\subsection*{Acknowledgements}
This research is supported in part by NSF Grants CNS-1248117, CMMI-1335724, and
CMMI-1537795, and by JSPS KAKENHI Grant Number 16H027860001.

\bigskip

\appendix

\noindent{\bf \Large Appendix}

\section{Proofs of Lemmas in Section~\ref{SECT:PRELIMIT}}
\label{app:full_gen_proofs}

In this section, we prove the lemmas that were stated but not proved in Section~\ref{SECT:PRELIMIT}. 

\subsection{Proof of Lemma~\ref{lem:concave 1}}
\label{app:concave 1}

The fact that $f(x)$ is well defined and finite is argued in the same way as in Lemma~\ref{lem:terminal condition 1}. We want to perform Taylor expansion on $f(x)$ and to do so  we first show that it is infinitely differentiable. Observe that the function 
\begin{align*}
  {G}(y) = \E (e^{y H}), \qquad y \in \R,
\end{align*}
is well defined and belongs to $C^{\infty}(\R)$. Hence, $F(x,y) \equiv {G}(y) - e^{-x}$ belongs to $C^{\infty}(\R^2)$. Since
\begin{align*}
  \frac {\partial}{\partial y} F(x,y) = \dd{E}(H e^{yH}) \not= 0, \qquad (x,y) \in \R^2,
\end{align*}
the implicit function theorem \cite[Theorem 3.3.1]{KranPark2013} implies that there is a single valued and infinitely differentiable function $y(x)$ of $x$ which is the solution of $F(x,y(x))=0$ for each \blue{$x \in \big(-\infty, -\log(\Prob(H = 0)) \big)$}. Therefore $f(x) = y(x)$ for $x \in \big(-\infty, -\log(\Prob(H = 0)) \big)$, proving (\ref{item:concave1_b}). The first and second derivatives of $f(x)$ are 
\begin{align}\label{eq:eta_D2}
& f'(x) =  -\frac{\E(e^{f(x) H} )}{\E( H e^{f(x) H} )}, \\
& f''(x) = - f'(x)\Bigg(1- \frac{\E(e^{f(x) H}) \E( H^2 e^{f(x) H} )}{\big(\E( H e^{f(x) H} )\big)^2}\Bigg) \notag .
\end{align}
Observe that $f'(x) < 0$, which implies that $f(x)$ is decreasing. To prove concavity, we wish to show that $f''(x) \leq 0$. This follows from 
\begin{align*}
\big(E( H e^{f(x) H} )\big)^2 \leq  \E(e^{f(x) H}) \E( H^2 e^{f(x) H}),
\end{align*} 
which is just the Cauchy-Schwarz inequality. A second order Taylor expansion of $f(x)$ combined with the facts that $f(0)=0$, $f'(0) = -\lambda_H$ and $f''(0) = -\lambda_H^{3} {\sigma}_{H}^{2}$ immediately implies \eqref{eq:generic asymptotic 1}. Lastly, since $f(x)$ is concave,
\begin{align*}
f(x) \leq f'(0) x \leq \lambda_H \abs{x}, \quad x \leq 0.
\end{align*}
Furthermore, the function $\abs{f(x)} = -f(x)$ is convex when restricted to $x \geq 0$. Therefore, 
\begin{align*}
\abs{f(x)} \leq \frac{\abs{f(K)} - \abs{f(0)}}{K-0} x = \frac{\abs{f(K)}}{K} x, \quad 0 \leq x \leq K.
\end{align*}

\subsection{Proof of Lemma~\ref{lem:uniform 1}}
\label{app:uniform1_gen_proof}
We begin by proving \eqref{eq:uniform_eta}, or
\begin{align}\label{eq:generalization_needed_lemma_uniform1}
& \lim_{n \to \infty} \sup_{0 < |\theta_i|<  K} \frac{c^{(n)}_{e,i}(r_n \theta_i)}{r_n^2 \theta_i^2}  \\
& \quad = \lim_{n \to \infty} \sup \limits_{\abs{y} \leq K} \frac{1}{2} \big| \big(\eta_{i}^{(n)}\big)''(r_n y) - \big(\eta_{i}^{(n)}\big)''(0) \big| = 0, \qquad i \in \sr{E}. \notag
\end{align} 
 Using \eqref{eq:eta_D2}, we see that $\big(\eta_{i}^{(n)}\big)''(\theta_i)$ can be written symbolically as 
\begin{align*}
\big(\eta_{i}^{(n)}\big)''(\theta_i) = \frac{a_1(\theta_i)}{a_2(\theta_i)} \Bigg( 1 - \frac{a_1(\theta_i) a_3(\theta_i)}{(a_2(\theta_i))^2} \Bigg),
\end{align*}
where 
\begin{align*}
& a_1(\theta_i) = \E \big(e^{\eta_i^{(n)}(\theta_i) g^{(n)}(T_{e,i}^{(n)})} \big)= e^{-\theta_i}, \\
& a_2^{(n)}(\theta_i) = \E \big( g^{(n)}(T_{e,i}^{(n)}) e^{\eta_i^{(n)}(\theta_i) g^{(n)}(T_{e,i}^{(n)})} \big), \\ 
& a_3^{(n)}(\theta_i) = \E \big( (g^{(n)}(T_{e,i}^{(n)}))^2 e^{\eta_i^{(n)}(\theta_i) g^{(n)}(T_{e,i}^{(n)})} \big).
\end{align*}
To prove \eqref{eq:generalization_needed_lemma_uniform1}, we want to replace each $a_i(r_n y)$ by $a_i(0)$ one at a time. For this, we need to show that
\begin{align*}
&\sup_{\substack{n \geq 1 \\ \abs{y} \leq K}} a_1(r_n y) < \infty, \quad \sup_{\substack{n \geq 1 \\ \abs{y} \leq K}} a_2^{(n)}(r_n y) < \infty,\\
&\sup_{\substack{n \geq 1 \\ \abs{y} \leq K}} \frac{1}{a_2^{(n)}(r_n y)} < \infty,  \quad \sup_{\substack{n \geq 1 \\ \abs{y} \leq K}}a_3^{(n)}(r_n y) < \infty,
\end{align*}
and
\begin{align}
&\lim_{n \to \infty} \sup \limits_{\abs{y} \leq K} \big| a_1(r_n y) - a_1(0)\big| = 0, \notag \\
&\lim_{n \to \infty} \sup \limits_{\abs{y} \leq K} \Big|a_2^{(n)}(r_n y) - a_2^{(n)}(0)\Big| = 0, \label{eq:ingred1} \\
&\lim_{n \to \infty} \sup \limits_{\abs{y} \leq K} \big| a_3^{(n)}(r_n y) - a_3^{(n)}(0)\big| = 0. \label{eq:ingred2}
\end{align}
We begin with $a_1(\theta_i) = e^{-\theta_i}$:
\begin{align*}
\sup_{\substack{n \geq 1 \\ \abs{y} \leq K}} e^{r_ny} < \infty \quad \text{ and } \quad \lim_{n \to \infty} \sup \limits_{\abs{y} \leq K} \big| e^{-r_n y} - 1\big| = 0.
\end{align*}
Moving on to $a_2^{(n)}(\theta_i)=\E \big( g^{(n)}(T_{e,i}^{(n)}) e^{\eta_i^{(n)}(\theta_i) g^{(n)}(T_{e,i}^{(n)})} \big)$; we first prove \eqref{eq:ingred1}.  By \eqref{eq:eta asymptotic 0}, we have
\begin{align} \label{eq:etarn_bound}
\abs{\eta_i^{(n)}(r_n y) g^{(n)}(T^{(n)}_{e,i})} \leq \hat{c}_{e,i}^{(n)}(r_nK) \abs{y}, \quad \abs{y} \leq K.
\end{align} 
Now observe that
\begin{align*}
& \lim_{n\to\infty} \sup \limits_{\abs{y} \leq K} \Bigg| \E g^{(n)}(T^{(n)}_{e,i}) - \E g^{(n)}(T^{(n)}_{e,i}) e^{ \eta_i^{(n)}(r_n y) g^{(n)}(T^{(n)}_{e,i}) } \Bigg|\notag \\ 
&\quad \leq \lim_{n\to\infty} \sup \limits_{\abs{y} \leq K} \Bigg| \E \big(g^{(n)}(T^{(n)}_{e,i})\big)^2\eta_i^{(n)}(r_n y) e^{\hat{c}_{e,i}^{(n)}(r_nK)\abs{y}} \Bigg|\\
& \quad \leq\ \lim_{n\to\infty}  r_n K\hat{c}_{e,i}^{(n)}(r_nK) e^{\hat{c}_{e,i}^{(n)}(r_nK)K}\E \big(T^{(n)}_{e,i}\big)^2 = 0.
\end{align*}
In the first inequality, we use \eqref{eq:etarn_bound} together with the bound
\begin{align} \label{eq:exp_ineq}
\abs{e^{x}-1} \leq \abs{x}e^{\abs{x}}, \quad x \in \R.
\end{align}
The second inequality is by \eqref{eq:eta asymptotic 0} and the fact that the limit equals zero follows from \eqref{eq:uniarr} and Lemma~\ref{lem:uniform_slopes}, proving \eqref{eq:ingred1}. By \eqref{eq:momentarr}, we know that 
\begin{align*}
\lim_{n \to \infty} a_2^{(n)}(0) = \frac{1}{\lambda_{e,i}} > 0.
\end{align*} 
Combined with \eqref{eq:ingred1}, this implies 
\begin{align*}
\sup_{\substack{n \geq 1 \\ \abs{y} \leq K}} a_2^{(n)}(r_n y) < \infty \quad \text{ and } \quad \sup_{\substack{n \geq 1 \\ \abs{y} \leq K}} \frac{1}{a_2^{(n)}(r_n y)} < \infty.
\end{align*}


Lastly, we tackle $a_3^{(n)}(\theta_i) = \E \big( (g^{(n)}(T_{e,i}^{(n)}))^2 e^{\eta_i^{(n)}(\theta_i) g^{(n)}(T_{e,i}^{(n)})} \big)$, first proving \eqref{eq:ingred2}. We have
\begin{align*}
& \sup \limits_{\abs{y} \leq K} \Big| \E \big( g^{(n)}(T^{(n)}_{e,i}) \big)^2 e^{\eta_i^{(n)}(r_n y) g^{(n)}(T^{(n)}_{e,i})} - \E \big( g^{(n)}(T^{(n)}_{e,i}) \big)^2 \Big| \notag \\
&\quad \leq  \sup \limits_{\abs{y} \leq K}  \E \Big|\big( g^{(n)}(T^{(n)}_{e,i}) \big)^3 \eta_i^{(n)}(r_n y)e^{\hat{c}_{e,i}^{(n)}(r_nK)K}\Big| \notag \\
&\quad \leq r_nK \hat{c}_{e,i}^{(n)}(r_nK) e^{\hat{c}_{e,i}^{(n)}(r_nK)K} \E \big(T^{(n)}_{e,i} \wedge 1/r_n \big)^3 \notag \\
&\quad \leq  K\hat{c}_{e,i}^{(n)}(r_nK) e^{\hat{c}_{e,i}^{(n)}(r_nK)K} \\
& \qquad\qquad \times \E \big[ \big(T^{(n)}_{e,i}\big)^2 \big(r_nT^{(n)}_{e,i} \wedge 1 \big)\big( 1(T^{(n)}_{e,i} > r_n^{-1/2}) + 1(T^{(n)}_{e,i} \leq r_n^{-1/2}) \big)\big] \notag \\
&\quad \leq  K\hat{c}_{e,i}^{(n)}(r_nK) e^{\hat{c}_{e,i}^{(n)}(r_nK)K} \E \big[\big(T^{(n)}_{e,i}\big)^2 1(T^{(n)}_{e,i} > r_n^{-1/2}) + \big(T^{(n)}_{e,i}\big)^2 r_n^{1/2}\big] \to 0 
\end{align*}
 as $n \to \infty$.
The first inequality is obtained by \eqref{eq:etarn_bound} and \eqref{eq:exp_ineq} and the second one by \eqref{eq:eta asymptotic 0}. Convergence to zero is justified by \eqref{eq:uniarr} and Lemma~\ref{lem:uniform_slopes} (whose proof in Section~\ref{app:uniform_slopes} does not use Lemma~\ref{lem:uniform 1}), which proves \eqref{eq:ingred2}. Finally, \eqref{eq:momentarr} implies 
\begin{align*}
\lim_{n \to \infty} a_3^{(n)}(0) < \infty.
\end{align*}
Combined with \eqref{eq:ingred2}, this gives us
\begin{align*}
\sup_{\substack{n \geq 1 \\ \abs{y} \leq K}}a_3^{(n)}(r_n y) < \infty,
\end{align*}
which concludes the proof of \eqref{eq:generalization_needed_lemma_uniform1}.

The second claim of this lemma, \eqref{eq:uniform_zeta}, is now simple to verify. Indeed, recalling the form of $c^{(n)}_{s,j}(\theta)$ from \eqref{eq:err_zeta} and the definition of $c_{Lip,j}(K)$ from \eqref{eq:clip_def}, we have
\begin{align*}
 &\lim_{n \to \infty} \sup_{0 < \|\theta\|<  K} \frac{c^{(n)}_{s,j}(r_n\theta)}{r_n^2 \|\theta\|^2}\\
 &\quad= \lim_{n \to \infty}\sup_{\abs{y} \leq c_{Lip,j}(r_nK) r_nK} \frac{\log^2 t_j(r_n\theta)}{r_n^2 \|\theta\|^2}  \Big| \big(\chi^{(n)}_j\big)''(y) - \big(\chi^{(n)}_j\big)''(0)\Big| \\
 &\qquad+ \lim_{n \to \infty} \sup_{0 < \|\theta\|<  K} \widetilde{\lambda}^{(n)}_{s,j}\frac{c_{1,j}(r_n\theta)}{r_n^2 \|\theta\|^2}  +  \lim_{n \to \infty} \sup_{0 < \|\theta\|<  K}\frac 12 \big(\widetilde{\lambda}^{(n)}_{s,j}\big)^{3} \big(\widetilde{\sigma}^{(n)}_{s,j}\big)^{2}\frac{c_{2,j}(r_n\theta)}{r_n^2 \|\theta\|^2}.
\end{align*}
From \eqref{eq:tildes_converge}, \eqref{eq:conetwo} and the fact that $\log t_j (\theta) \in C^{\infty}(\R^d)$ its easy to see that
\begin{align*}
\lim_{n \to \infty}\sup_{0 < \|\theta\| \leq K} \frac{c_{2,j}(r_n\theta)}{r_n^2 \|\theta\|^2} = 0 \quad \text{ and } \quad \lim_{n \to \infty}\sup_{0 < \|\theta\| \leq K} \frac{c_{2,j}(r_n\theta)}{r_n^2 \|\theta\|^2} = 0.
\end{align*}
Furthermore, 
\begin{align*}
&\lim_{n \to \infty}\sup_{\abs{y} \leq c_{Lip,j}(r_nK) r_nK} \frac{\log^2 t_j(r_n\theta)}{r_n^2 \|\theta\|^2}  \Big| \big(\chi^{(n)}_j\big)''(y) - \big(\chi^{(n)}_j\big)''(0)\Big|\\
&\quad\leq \lim_{n \to \infty}\sup_{\abs{y} \leq c_{Lip,j}(r_nK) r_nK} c_{Lip,j}^{2}(r_nK)  \Big| \big(\chi^{(n)}_j\big)''(y) - \big(\chi^{(n)}_j\big)''(0)\Big| = 0,
\end{align*}
because by the definition of $\chi_j^{(n)}$ in \eqref{eq:chi 1}, it is not hard to repeat the arguments used in this proof to see that a version \eqref{eq:generalization_needed_lemma_uniform1} holds for $\chi_j^{(n)}$ as well.

\subsection{Proof of Lemma~\ref{lem:uniform_slopes}}
\label{app:uniform_slopes}
We want to show that for all $i \in \sr{E}$ and all $K \in (0,r_nM]$,
\begin{align}
\sup_{n \geq 1} \hat{c}_{e,i}^{(n)}(r_nK) = \sup_{n \geq 1} \Big\{ \max \big{\{}\widetilde{\lambda}^{(n)}_{e,i}, \abs{\eta^{(n)}_i(r_nK)}/r_nK \big{\}} \Big\} < \infty. \label{eq:slopes_wts}
\end{align}
 By \eqref{eq:tildes_converge}, $\widetilde{\lambda}_{e,i}^{(n)} \to \lambda_{e,i} < \infty$. Now since $\eta^{(n)}_i(\theta_i)$ is finite for each fixed $n$, it is enough to show that 
\begin{align*}
\limsup_{n \to \infty} \frac{\abs{\eta^{(n)}_i(r_nK)}}{r_n K } < \infty.
\end{align*} 
Recalling the definition of $\eta^{(n)}_i(\theta_i)$ in \eqref{eq:eta 1} and the truncation function $g^{(n)}(y) = \min(y, 1/r_n)$ defined for $y \in \R$, we observe that
\begin{align}\label{eq:deriv1bound}
 \frac{\abs{\eta^{(n)}_i(r_nK)}}{r_n K } & \leq \sup_{0 \leq y \leq r_n K}\big| \big(\eta^{(n)}_i\big)'(y)\big| \\
 & = \sup_{0 \leq y \leq r_n K} \frac{e^{-y}}{\E\big(g^{(n)}(T_{e,i}^{(n)})e^{g^{(n)}(T_{e,i}^{(n)}) \eta_i^{(n)}(y)}\big)} \notag \\
 & \leq \frac{1}{\E\big(g^{(n)}(T_{e,i}^{(n)})e^{g^{(n)}(T_{e,i}^{(n)}) \eta_i^{(n)}(r_n K)}\big)}, \notag
\end{align}  
where in the last inequality, we used part (\ref{item:concave1_c}) of Lemma~\ref{lem:concave 1}, which states that $\eta^{(n)}_i(\theta_i)$ is a decreasing function of $\theta_i$. \blue{To bound this term, we first argue that there exists an $\epsilon \in (0,1)$ and $y_1 > 0$ such that 
\begin{align}
\liminf_{n \to \infty} \Prob(T_{e,i}^{(n)} \geq y_1) \geq \epsilon. \label{eq:problower}
\end{align}
Suppose \eqref{eq:problower} is not true. Then $\liminf_{n \to \infty} \Prob(T_{e,i}^{(n)} \geq y) < \epsilon$ for any $\epsilon \in (0,1)$ and $y > 0$. On the other hand, it follows from the uniform integrability assumption \eqref{eq:uniarr} that for any $\epsilon'$, there exists $a_0$ such that for any $a \geq a_0$, 
\begin{align*}
\E \big(T_{e,i}^{(n)} 1(T_{e,i}^{(n)} > a)\big) < \epsilon', \quad n \geq 1.
\end{align*}
Hence,
\begin{align*}
\E(T_{e,i}^{(n)}) & = \E \big(T_{e,i}^{(n)} 1(T_{e,i}^{(n)} \leq y)\big) + \E \big(T_{e,i}^{(n)} 1(T_{e,i}^{(n)} \in (y,a])\big) \\
& \qquad + \E \big(T_{e,i}^{(n)} 1(T_{e,i}^{(n)} > a)\big) \\
& \leq y + a \Prob(T_{e,i}^{(n)} > y) + \epsilon',
\end{align*}
and 
\begin{align*}
\lim_{n \to \infty} \E(T_{e,i}^{(n)}) = \liminf_{n \to \infty} \E(T_{e,i}^{(n)}) \leq y + \liminf_{n \to \infty}a \Prob(T_{e,i}^{(n)} > y) + \epsilon' \leq  y + a \epsilon + \epsilon'.
\end{align*}
Taking $\epsilon, y \to 0$ and then $\epsilon' \to 0$, we conclude that $\lim_{n \to \infty} \E(T_{e,i}^{(n)}) = 0$, which contradicts $\lambda_{e,i} < \infty$ in \eqref{eq:momentarr} and therefore proves \eqref{eq:problower}. }In addition to \eqref{eq:problower}, the uniform integrability assumption \eqref{eq:uniarr} implies that there exists a $y_2 > y_1$ such that 
\begin{align}
\sup_{n \geq 1} \Prob(T_{e,i}^{(n)} \geq y_2) < \epsilon/2. \label{eq:probupper}
\end{align}
Fixing such $y_1$ and $y_2$, and recalling that $\eta_i^{(n)}(r_n K) < 0$ for all $n \geq 1$, we see that 
\begin{align*}
&\frac{1}{\E\big(g^{(n)}(T_{e,i}^{(n)})e^{g^{(n)}(T_{e,i}^{(n)}) \eta_i^{(n)}(r_n K)}\big)} \\
& \quad \leq \frac{1}{\E\big(g^{(n)}(T_{e,i}^{(n)})e^{g^{(n)}(T_{e,i}^{(n)}) \eta_i^{(n)}(r_n K)}1(y_1 \leq T_{e,i}^{(n)} < y_2)\big)} \\
& \quad \leq \frac{1}{\E\big(g^{(n)}(y_1)e^{g^{(n)}(y_2) \eta_i^{(n)}(r_n K)}1(y_1 \leq T_{e,i}^{(n)} < y_2)\big)} \\
&\quad =\ \frac{e^{g^{(n)}(y_2) \abs{\eta_i^{(n)}(r_n K)}}}{g^{(n)}(y_1)\Prob(y_1 \leq T_{e,i}^{(n)} < y_2)}.
\end{align*}
We recall our choices of $y_1$ and $y_2$ from \eqref{eq:problower} and \eqref{eq:probupper} to arrive at 
\begin{align*}
\limsup_{n \to \infty} \frac{\abs{\eta^{(n)}_i(r_nK)}}{r_n K } \leq&\ \limsup_{n \to \infty} \frac{e^{g^{(n)}(y_2) \abs{\eta_i^{(n)}(r_n K)}}}{g^{(n)}(y_1)\Prob(y_1 \leq T_{e,i}^{(n)} < y_2)} \\
\leq&\ \limsup_{n \to \infty} \frac{2}{y_1\epsilon}e^{y_2 \abs{\eta_i^{(n)}(r_n K)}} .
\end{align*}
Since $\eta_i^{(n)}(r_nK) < 0$, it remains to show that 
\begin{align*}
\liminf_{n \to \infty}\eta^{(n)}_i(r_nK) > -\infty.
\end{align*}
Now for any $y > 0$,
\begin{align*}
e^{-r_nK} = \E \big(e^{g^{(n)}(T_{e,i}^{(n)}) \eta_i^{(n)}(r_nK)}\big) \leq&\ e^{g^{(n)}(y) \eta_i^{(n)}(r_nK)}\Prob(T_{e,i}^{(n)} > y) + \Prob(T_{e,i}^{(n)} < y)\\
 \leq&\ e^{g^{(n)}(y) \eta_i^{(n)}(r_nK)} + \Prob(T_{e,i}^{(n)} < y).
\end{align*}
We choose $y=y_1$ from \eqref{eq:problower}, and use the fact that $e^{-r_nK} \to 1$, to see that
\begin{align*}
\liminf_{n \to \infty}\Big( e^{-r_nK} - \Prob(T_{e,i}^{(n)} < y_1)\Big) = 1 - \limsup_{n \to \infty}\Prob(T_{e,i}^{(n)} < y_1) \geq 1 - (1 - \epsilon)  = \epsilon.
\end{align*}
Therefore, 
\begin{align*}
\liminf_{n \to \infty} \eta_i^{(n)}(r_nK) \geq&\ \liminf_{n \to \infty} \frac{1}{g^{(n)}(y_1)} \log \big(e^{-r_nK} - \Prob(T_{e,i}^{(n)} < y_1)\big)\\
=&\ \frac{1}{y_1} \liminf_{n \to \infty} \log \big(e^{-r_nK} - \Prob(T_{e,i}^{(n)} < y_1)\big), \\
\geq&\ \frac{1}{y_1} \log (\epsilon),
\end{align*}
where the last inequality is obtained by taking the limit infimum inside the logarithm. This proves \eqref{eq:slopes_wts}. The argument to show that
\begin{align*}
\sup_{n \geq 1} \hat{c}_{s,j}^{(n)}(r_nK) = \sup_{n \geq 1} c_{Lip,j}(r_nK)  \max \big{\{}\widetilde{\lambda}^{(n)}_{s,j}, \abs{\chi^{(n)}_j(\widehat{r_nK})}/\widehat{r_nK} \big{\}} < \infty
\end{align*}
is nearly identical to the one for $\hat{c}_{e,i}^{(n)}(r_nK)$, and requires two slight modifications. First, observe that
\begin{align*}
\sup_{n \geq 1} c_{Lip,j}(r_nK) < \infty,
\end{align*}
which is a trivial consequence of its definition in \eqref{eq:clip_def}. 
Second, the equality in \eqref{eq:deriv1bound} would be modified to 
\begin{align*}
\sup_{\substack{\|y \| \leq r_n K \\y \in \R^d}}\big| \big(\chi^{(n)}_j\big)'(\log t_j(y))\big| =&\ \sup_{\substack{\|y \| \leq r_n K \\y \in \R^d}} \frac{e^{-\log t_j(y)}}{\E\big(g^{(n)}(T_{s,j}^{(n)})e^{g^{(n)}(T_{s,j}^{(n)}) \chi_j^{(n)}(\log t_j(y))}\big)} \\
\leq&\ \sup_{\substack{\|y \| \leq r_n K \\y \in \R^d}} \frac{e^C}{\E\big(g^{(n)}(T_{s,j}^{(n)})e^{g^{(n)}(T_{s,j}^{(n)}) \chi_j^{(n)}(\log t_j(y))}\big)},
\end{align*}
where $C > 0$ is an upper bound for $\sup_{\substack{\|y \| \leq r_n K}} \abs{\log t_j(y)}$ that is  independent of $n$, and $t_j(\theta)$ is defined in \eqref{eq:zeta 1}.

\subsection{Proofs of Lemmas~\ref{lem:boundary_probs} and \ref{lem:residualUI} }
\label{app:bp+ru}
Let $\nu^{(n)}$ be the probability measure corresponding to $X^{(n)}(\infty)$. For $i \in \sr{E}$ and $j \in \J$, recall the external arrival process $\{E_i^{(n)}(t), t \geq 0\}$ and the departure process $\{D_j^{(n)}(t), t \geq 0\}$ defined in \eqref{eq:arrival_process} and \eqref{eq:departure_process}, respectively. Set 
\begin{align*}
E_i^{(n)}(0,t] = E_i^{(n)}(t) - E_i^{(n)}(0) \quad \text{ and } \quad D_j^{(n)}(0,t] = D_j^{(n)}(t) - D_j^{(n)}(0).
\end{align*} 
In this section we prove the following five statements, which imply Lemmas~\ref{lem:boundary_probs} and \ref{lem:residualUI}. We show that for all $i \in \sr{E}$ and $j \in \J$, any real number $K > 0$ and any integer $n \geq 1$,  
\begin{align}
&\Prob(L_j^{(n)}(\infty) > 0) = \frac{\E_{\nu^{(n)}}\big[ D_j^{(n)}(0,1]\big]}{\lambda_{s,j}^{(n)}},\label{eq:bar_item1}\\
&\E_{\nu^{(n)}} \big[ E_i^{(n)}(0,t]\big] =  \lambda_{e,i}^{(n)} t, \label{eq:bar_item2} \\
&\E_{\nu^{(n)}} \big[ D_j^{(n)}(0,t]\big] = \lambda_{a,j}^{(n)} t,\label{eq:bar_item3} \\
&\E \big[R_{s,j}^{(n)}(\infty) 1(R_{s,j}^{(n)}(\infty) > K)1(L_j^{(n)}(\infty)>0)  \big] \label{eq:bar_item4} \\
&\quad = \frac{\E_{\nu^{(n)}}\big[ D_j^{(n)}(0,1]\big]}{2}\E \big[(T_{s,j}^{(n)})^2 1(T_{s,j}^{(n)} > K) - K^2 1(T_{s,j}^{(n)} > K) \big], \notag\\
& \E \big[R_{e,i}^{(n)}(\infty) 1(R_{e,i}^{(n)}(\infty) > K)  \big] \label{eq:bar_item5}\\
&\quad = \frac{\E_{\nu^{(n)}}\big[ E_i^{(n)}(0,1]\big]}{2}\E \big[(T_{e,i}^{(n)})^2 1(T_{e,i}^{(n)} > K) - K^2 1(T_{e,i}^{(n)} > K) \big]. \notag
\end{align}
Assuming these five equations are correct, we see that Lemma~\ref{lem:boundary_probs} is immediately implied by \eqref{eq:bar_item1} and \eqref{eq:bar_item3}. To see why Lemma~\ref{lem:residualUI} is true, we write 
\begin{align*}
&\ \E \big[R_{s,j}^{(n)}(\infty) 1(R_{s,j}^{(n)}(\infty) > K)  \big]\\
&\quad = \ \E \big[R_{s,j}^{(n)}(\infty) 1(R_{s,j}^{(n)}(\infty) > K)1(L_j^{(n)}(\infty)>0)  \big]
 \\
 & \qquad + \E \big[R_{s,j}^{(n)}(\infty) 1(R_{s,j}^{(n)}(\infty) > K)1(L_j^{(n)}(\infty)=0)  \big] \\
& \quad =\ \E \big[R_{s,j}^{(n)}(\infty) 1(R_{s,j}^{(n)}(\infty) > K)1(L_j^{(n)}(\infty)>0)  \big]
 \\
 & \qquad + \E \big[T_{s,j}^{(n)}1(T_{s,j}^{(n)} > K) \big] \Prob(L_j^{(n)}(\infty)=0),
\end{align*}
where in the last equality we use  $\big[ R_{s,j}^{(n)}(\infty) |L_j^{(n)}(\infty) =0\big] \stackrel{d}{=} T_{s,j}^{(n)}$, which is true by construction. Uniform integrability of $\{R_{e,i}^{(n)}(\infty), n \geq 1\}$ and $\{R_{s,j}^{(n)}(\infty), n \geq 1\}$ then follows from \eqref{eq:momentarr}, \eqref{eq:uniarr}--\eqref{eq:lamdaconverge},  and  \eqref{eq:bar_item2}--\eqref{eq:bar_item5}, thereby proving Lemma~\ref{lem:residualUI}.

In the rest of this section, we prove \eqref{eq:bar_item1}--\eqref{eq:bar_item5} using the BAR \eqref{eq:hard_BAR}, which is repeated here as
\begin{align}\label{eq:lem_bar}
&\E_{\nu} \big[ {\cal A}f(X(0))\big] \\
& +  \frac{1}{t}\sum_{m=1}^{\infty} \E_{\nu}\Big[\big(f(X_{\delta_m})- f(X_{\delta_m-}) \big) 1(0 < T(\delta_m)  \leq t) \Big] = 0, \quad f \in \mathcal{D}, \notag 
\end{align}
where $\delta^{(n)}_m$, $T(\delta^{(n)}_m)$, $X^{(n)}_{\delta^{(n)}_m}$ and $X^{(n)}_{\delta^{(n)}_m-}$ are introduced below \eqref{eq:finite_events_assumption} in Section~\ref{sect:dynamics}, the operator $\cal{A}$ is defined in \eqref{eq:A}, and the class of functions $\cal{D}$ is defined above \eqref{eq:A}.

Fix an integer $\kappa \geq 1$ and $j \in \J$. We now prove \eqref{eq:bar_item1} by applying the BAR \eqref{eq:lem_bar} to $f_{\kappa,1}(x) = v_j \wedge \kappa$ (which belongs to $\cal{D}$) to obtain 
\begin{align*}
\E \big[1(&L^{(n)}_j(\infty)>0)1(R_{s,j}^{(n)}(\infty) < \kappa) \big]\\
 =&\  \frac{1}{t} \sum_{m=1}^{\infty} \E_{\nu^{(n)}}\Big[ \big(f_{\kappa,1}(X^{(n)}_{\delta^{(n)}_m})- f_{\kappa,1}(X^{(n)}_{\delta^{(n)}_m-}) \big) 1(0 < T(\delta^{(n)}_m)  \leq t) \Big].
\end{align*}
For station $j$, we recall the sequence  of service times $\{T_{s,j}^{(n)}(q),\ q \geq 1\}$ and the sequence of departure times  $\{S_j^{(n)}(q),\ q \geq 1\}$, defined in \eqref{eq:seq_service} and \eqref{eq:service_times}, respectively. From the form of the test function, we see that $f_{\kappa,1}(X^{(n)}_{\delta^{(n)}_m})- f_{\kappa,1}(X^{(n)}_{\delta^{(n)}_m-}) = 0$ for all events $\delta^{(n)}_m$ that do not correspond to departures from station $j$. Therefore,
\begin{align}\label{eq:service_jumps}
& \sum_{m=1}^{\infty} \E_{\nu^{(n)}}\Big[ \big(f_{\kappa,1}(X^{(n)}_{\delta^{(n)}_m})- f_{\kappa,1}(X^{(n)}_{\delta^{(n)}_m-}) \big) 1(0 < T(\delta^{(n)}_m)  \leq t) \Big]  \\
&\quad =   \sum_{q=1}^\infty \E_{\nu^{(n)}} \Big[f_{\kappa,1}\big(T_{s,j}^{(n)}(q)\big)1(S_j^{(n)}(q)\le t) \Big] \notag  \\
& \quad =   \sum_{q=1}^\infty \E\big[ f_{\kappa,1}(T_{s,j}^{(n)}) \big] \E_{\nu^{(n)}} \Big[1(S_j^{(n)}(q)\le t) \Big] \notag  \\
& \quad =   \E_{\nu^{(n)}} \big[ D_j^{(n)}(0,t] \big] \E\big[ f_{\kappa,1}(T_{s,j}^{(n)}) \big]. \notag
\end{align}
In the second equality, we used the independence of $T_{s,j}^{(n)}(q)$ and $S_j^{(n)}(q)$, and in the last equality we use the Fubini-Tonelli theorem, justified by the fact that the summands are non-negative. We therefore have
\begin{align*}
&\E \big[1(L_j^{(n)}(\infty)>0)1(R_{s,j}^{(n)}(\infty) < \kappa)\big]
\\
& \quad =   \frac{1}{t}\E_{\nu^{(n)}} \big[ D_j^{(n)}(0,t] \big] \E\big[ f_{\kappa,1}(T_{s,j}^{(n)}) \big], \qquad j \in \J.
\end{align*}
We set $t = 1$, take the limit as $\kappa \to \infty$ on each side, and apply the monotone convergence theorem to conclude the proof of \eqref{eq:bar_item1}. 

To prove \eqref{eq:bar_item2}, we use the test function $f_{\kappa,2}(x) = u_i\wedge \kappa$ (with $\kappa \geq 1$, and $i \in \sr{E}$) and repeat the argument for \eqref{eq:bar_item1}, but instead of $S_j^{(n)}(q)$, we use $U_i^{(n)}(q)$, as defined in \eqref{eq:arrival_times} to be the $q$th external arrival time to station $i \in \sr{E}$.  An alternative way to prove \eqref{eq:bar_item2} would be to verify that the process $E_i^{(n)}$ is stationary when $E_i^{(n)}(0)$ is initialized accoridng to $\nu^{(n)}$. Then \eqref{eq:bar_item2} would follow from \cite[Theorem 76]{Serf2009}.

We now move on to prove \eqref{eq:bar_item3}. We recall from \eqref{eq:dynamics_queue} that on every sample path,
\begin{align} \label{eq:sample_queue}
L_j^{(n)}(t) - L_j^{(n)}(0) = E_j^{(n)}(0,t] - D_j^{(n)}(0,t] + \sum_{k \in \J} \Phi^{(k)}_j(D_k^{(n)}(0,t]),
\end{align}
where $E_j(t) \equiv 0$ if $j \in \J \setminus \sr{E}$ and  $\{\Phi^{(k)}(m), m \geq 1 \} $ is the customer routing process from \eqref{eq:routing_vector}. We first show that 
\begin{align*}
\E_{\nu^{(n)}} \Big[L_j^{(n)}(t) - L_j^{(n)}(0) \Big] =0,
\end{align*}
which would have been immediate provided we knew that $\E \big[ L_j^{(n)}(\infty) \big] < \infty$. Instead, we use the stationarity of $\nu^{(n)}$ to see that for every $\kappa \geq 1$, 
\begin{align*}
\E_{\nu^{(n)}} \big[ (L_j^{(n)}(t)\wedge \kappa) - (L_j^{(n)}(0)\wedge \kappa)\big] = 0.
\end{align*}
We wish to take the limit as $\kappa \to \infty$ and apply the dominated convergence theorem. To justify doing so, observe that 
\begin{align*}
\big| (L_j^{(n)}(t)\wedge \kappa) - (L_j^{(n)}(0)\wedge \kappa)\big| \leq E_j^{(n)}(0,t] + \sum_{k \in \J} D_k^{(n)}(0,t],
\end{align*} 
and by \eqref{eq:finite_intensities}, the expectation of the right hand side (with respect to $\nu^{(n)}$) is finite. We apply the dominated convergence theorem to get
\begin{align*}
0 = \lim_{\kappa \to \infty} \E_{\nu^{(n)}} \big[ (L_j^{(n)}(t)\wedge \kappa) - (L_j^{(n)}(0)\wedge \kappa)\big] = \E_{\nu^{(n)}} \Big[L_j^{(n)}(t) - L_j^{(n)}(0) \Big],
\end{align*}
implying
\begin{align*}
&\ \E_{\nu^{(n)}} \big[E_j^{(n)}(0,t]\big] - \E_{\nu^{(n)}} \big[D_j^{(n)}(0,t]\big] + \sum_{k \in \J} \E_{\nu^{(n)}}\Big[\Phi^{(k)}_j(D_k^{(n)}(0,t]) \Big] = 0. 
\end{align*}
Suppose we knew that
\begin{align}
\E_{\nu^{(n)}}\Big[\Phi^{(k)}_j(D_k^{(n)}(0,t]) \Big] =&\ p_{kj} \E_{\nu^{(n)}} \big[ D_k^{(n)}(0,t]\big]. \label{eq:wts_bar3}
\end{align}
Then 
\begin{align*}
\E_{\nu^{(n)}} \big[E_j^{(n)}(0,t]\big] - \sum_{k \in \J} \E_{\nu^{(n)}} \big[ D_k^{(n)}(0,t]\big](\delta_{kj} - p_{kj}) = 0, \quad j \in \J.
\end{align*}
Letting $D^{(n)}(0,t] \in \R^d$ be the vector whose elements are $ D_k^{(n)}(0,t]$, we would use \eqref{eq:traffic 2} and \eqref{eq:bar_item2} to conclude that
\begin{align*}
\E \big[  D^{(n)}(0,t]\big] = (I - P^{\rs{T}})^{-1}\lambda_e^{(n)} t = \lambda_{a}^{(n)}t, 
\end{align*}
thereby completing the proof of  \eqref{eq:bar_item3}. It remains to verify \eqref{eq:wts_bar3}. Observe that 
\begin{align*}
\E_{\nu^{(n)}}\Big[\Phi^{(k)}_j(D_k^{(n)}(0,t]) \Big]  =&\ \E_{\nu^{(n)}}\bigg[\sum_{q=1}^{ \infty} \phi^{(k)}_j(q)1(S_j^{(n)}(q) \leq t) \bigg]\\
 =&\ \sum_{q=1}^{ \infty} \E_{\nu^{(n)}}\Big[\phi^{(k)}_j(q)1(S_j^{(n)}(q) \leq t) \Big],
\end{align*}
where $\phi^{(k)}_j$ is the indicator that the $q$th customer departing station $k$ routes to station $j$, as defined in \eqref{eq:routing_decisions}, and in the second equality we apply the Fubini-Tonnelli theorem, justified by the non-negativity of the summands. By definition, $\phi^{(k)}_j(q)$ is independent of $S_j^{(n)}(q)$ for all $q \geq 1$, and
\begin{align*}
\E_{\nu^{(n)}} \big[ \phi^{(k)}_j(q)\big] = p_{kj}, \quad q \geq 1.
\end{align*}
Repeating the arguments used to obtain \eqref{eq:service_jumps}, we see that
\begin{align*}
\E_{\nu^{(n)}}\Big[\Phi^{(k)}_j(D_k^{(n)}(0,t]) \Big] =&\ p_{kj} \E_{\nu^{(n)}} \big[ D_k^{(n)}(0,t]\big].
\end{align*} 
 This proves \eqref{eq:wts_bar3} and concludes the proof of \eqref{eq:bar_item3}.

We move on to prove \eqref{eq:bar_item4}. For $j \in \J$ and $K > 0$, we introduce the test function
\begin{align*}
f_{\kappa,3}(x) = [v_j^21(v_j>K) + K^21(v_j\leq K)] \wedge \kappa^2, \quad \kappa \geq \left\lceil K \right\rceil.
\end{align*}
Observe that
\begin{align*}
f_{\kappa,3}'(v_j) = 2 v_j 1(K < v_j < \kappa).
\end{align*}
Plugging $f_{\kappa,3}(x)$ into \eqref{eq:lem_bar} and repeating the arguments used to get \eqref{eq:service_jumps}, we get
\begin{align*}
&\E \big[ 1(L^{(n)}_j(\infty) > 0) R_{s,j}^{(n)}(\infty) 1(K < R_{s,j}^{(n)}(\infty) < \kappa) \big]\\
 & \quad=  \frac{1}{2}\E_{\nu^{(n)}} \big[ D_j^{(n)}(0,1] \big] \E\big[ f_{\kappa,3}(T_{s,j}^{(n)}) - K^2 \big].
\end{align*}
Taking $\kappa \to \infty$ and applying the monotone convergence theorem proves \eqref{eq:bar_item4}. Finally, for $i \in \sr{E}$, the argument for \eqref{eq:bar_item5} is identical once we use the test function
\begin{align*}
f_{\kappa,4}(x) = [u_i^21(u_i>K) + K^21(u_i\leq K)] \wedge \kappa^2, \quad \kappa \geq \left\lceil K \right\rceil.
\end{align*}
This concludes the proofs of Lemmas~\ref{lem:boundary_probs} and \ref{lem:residualUI}.

\subsection{Proof of \lem{limiting 1}}
\label{app:limiting 1}
We begin with \eq{limiting 1}. Recall the definition of $\gamma(\theta)$ from \eqref{eq:gamma 1} and for $i \in \sr{E}$ and $j \in \J$, the quadratic approximations of $\eta_i^{(n)}(\theta_i)$ and $\zeta_j^{(n)}(\theta)$ from \eqref{eq:eta asymptotic 1} and \eqref{eq:zeta asymptotic 1}. Lemma~\ref{lem:uniform 1} tells us that the error from these quadratic approximations vanishes. To conclude \eqref{eq:limiting 1}, it remains to show that for all $i \in \sr{E}$ and $j \in \J$,
\begin{align*}
&\lim_{n \to \infty} \frac{1}{r_n} \abs{\widetilde{\lambda}_{e,i}^{(n)} - {\lambda}_{e,i}^{(n)} } = 0, \quad \lim_{n \to \infty} \frac{1}{r_n} \abs{\widetilde{\lambda}_{s,j}^{(n)} - {\lambda}_{s,j}^{(n)} } = 0, \\
&\lim_{n \to \infty}  \abs{\widetilde{\sigma}_{e,i}^{(n)} - {\sigma}_{e,i}^{(n)}} = 0, \quad \lim_{n \to \infty}  \abs{\widetilde{\sigma}_{s,j}^{(n)} - {\sigma}_{s,j}^{(n)}} = 0.
\end{align*}
The latter two statements are a consequence of \eqref{eq:momentarr}, \eqref{eq:momentser}, and \eqref{eq:tildes_converge}. We observe that
\begin{align*}
\frac{1}{r_n} \abs{\widetilde{\lambda}_{e,i}^{(n)} - {\lambda}_{e,i}^{(n)} } =&\ \widetilde{\lambda}_{e,i}^{(n)}{\lambda}_{e,i}^{(n)}\frac{1}{r_n}  \big|\E T^{(n)}_{e,i} - \E (T^{(n)}_{e,i}\wedge 1/r_n) \big| \\
=&\ \widetilde{\lambda}_{e,i}^{(n)}{\lambda}_{e,i}^{(n)}\frac{1}{r_n} \E (T^{(n)}_{e,i} 1(T^{(n)}_{e,i}>1/r_n)) \\
\leq &\ \widetilde{\lambda}_{e,i}^{(n)}{\lambda}_{e,i}^{(n)} \big| \E ((T^{(n)}_{e,i})^2 1( T^{(n)}_{e,i}>1/r_n))\big| \to 0 \quad \text{ as $n \to \infty$},
\end{align*}
where the convergence to zero is by \eqref{eq:momentarr}, \eqref{eq:tildes_converge} and the uniform integrability of $\{(T^{(n)}_{e,i})^2, n \geq 1\}$. The same argument holds for $\widetilde{\lambda}_{s,j}^{(n)}$, proving \eq{limiting 1}. 

We move on to verify \eqref{eq:limiting 2}, or
\begin{align}
\lim_{n \to \infty} \sup_{\substack{\theta < 0 \\ \|\theta\| \leq K}} \Big| \E \big[f_{r_n \theta}^{(n)}(X^{(n)}(\infty))\big]  - \varphi^{(n)}(\theta) \Big| = 0. \label{eq:non_limiting 2}
\end{align}
Recalling the definitions of $f_{r_n \theta}^{(n)}$ and $\varphi^{(n)}(\theta)$ from \eqref{eq:testf} and \eqref{eq:laplaceprelimit}, we have
\begin{align}\label{eq:non_limit2_interm}
& \sup_{\substack{\theta < 0 \\ \|\theta\| \leq K}}  \Big| \E \big[f_{r_n \theta}^{(n)}(X^{(n)}(\infty))- e^{\br{\theta, {Z}^{(n)}(\infty)}}\big] \Big| \\
&\quad \leq \ \sup_{\substack{\theta < 0 \\  \|\theta\| \leq K}}   \E \Big|e^{\sum_{i \in \sr{E}} \eta_i^{(n)}(r_n\theta_i)g^{(n)}(R^{(n)}_{e,i}(\infty))+\sum_{j \in \J} \zeta_j^{(n)}(r_n\theta) g^{(n)}(R^{(n)}_{s,j}(\infty)) }- 1\Big|  \notag \\
&\quad\leq \ c_f(K)  \sup_{\substack{\theta < 0 \\ \|\theta\| \leq K}} \E \Big{|}  \sum_{i \in \sr{E}} (R^{(n)}_{e,i}(\infty)\wedge 1/r_n)\eta_i^{(n)}(r_n\theta_i)\notag \\
&\qquad \qquad +\sum_{j \in \J}(R^{(n)}_{s,j}(\infty) \wedge 1/r_n)\zeta_j^{(n)}(r_n\theta_i)\Big{|}.  \notag
\end{align}
To obtain the last inequality, we used \eqref{eq:exp_ineq} and  
\begin{align*}
\sup_{\|\theta\| \leq K}e^{|\sum_{i \in \sr{E}} \eta_i^{(n)}(r_n\theta_i)g^{(n)}(R^{(n)}_{e,i}(\infty))+\sum_{j \in \J} \zeta_j^{(n)}(r_n\theta) g^{(n)}(R^{(n)}_{s,j}(\infty)) |} \leq c_f(K),
\end{align*}
where $c_f(K)$ is defined in \eqref{eq:testf_bound}. For $i \in \sr{E}$ we have
\begin{align*}
&\lim_{n\to \infty} \sup_{\substack{\theta < 0 \\ \|\theta\| \leq K}} \E\Big{|} (R^{(n)}_{e,i}(\infty)\wedge 1/r_n)\eta_i^{(n)}(r_n\theta_i)\Big{|}\\
&\quad\leq\ \lim_{n\to \infty} \hat{c}_{e,i}^{(n)}(r_nK) r_nK \E\big[ R^{(n)}_{e,i}(\infty)\big] = 0,
\end{align*}
where the inequality is by \eqref{eq:eta asymptotic 0} and the fact that the limit equals zero is by Lemma~\ref{lem:uniform_slopes} and Lemma~\ref{lem:residualUI}. A similar argument holds for $R_{s,j}^{(n)}(\infty)$, implying \eqref{eq:non_limiting 2}. 

We move on to verify \eqref{eq:limiting 3}, or 
\begin{align}
&\lim_{n \to \infty}  \sup_{\substack{\theta < 0 \\  \|\theta\| \leq K}} \frac{1}{r_n} \Big| \E \big[1( L_{k}^{(n)}(\infty)=0)\big(f_{r_n \theta}^{(n)}(X^{(n)}(\infty))- e^{\br{\theta, {Z}^{(n)}(\infty)}}\big)\big] \Big| \label{eq:non_limiting 3} \\
&\quad= 0, \quad k \in \J. \notag
\end{align}
Repeating the steps used to obtain \eqref{eq:non_limit2_interm}, we have
\begin{align}\label{eq:non_limit3_interm}
& \sup_{\substack{\theta < 0 \\ \|\theta\| \leq K}} \frac{1}{r_n} \Big| \E \big[1( L_{k}^{(n)}(\infty)=0)\big(f_{r_n \theta}^{(n)}(X^{(n)}(\infty))- e^{\br{\theta, {Z}^{(n)}(\infty)}}\big)\big] \Big|  \\
& \quad\leq \ c_f(K)\sum_{i \in \sr{E}}\E \Big|  K\hat{c}_{e,i}^{(n)}(K)  1( L_{k}^{(n)}(\infty)=0)(R^{(n)}_{e,i}(\infty)\wedge 1/r_n)\Big| \notag \\
& \qquad + c_f(K)\sum_{j \in \J}\E \Big|K\hat{c}_{s,j}^{(n)}(r_nK)  1( L_{k}^{(n)}(\infty)=0)(R^{(n)}_{s,j}(\infty)\wedge 1/r_n)\Big|. \notag
\end{align}
To show that \eqref{eq:non_limit3_interm} vanishes as $n \to \infty$, observe that by \eqref{eq:uniarr} and Lemma~\ref{lem:boundary_probs}, 
\begin{align*}
&\E \big[  1( L_{k}^{(n)}(\infty)=0)(R^{(n)}_{e,i}(\infty)\wedge 1/r_n)\big]\\
&\quad = \ \E \big[  1( L_{k}^{(n)}(\infty)=0)(R^{(n)}_{e,i}(\infty)\wedge 1/r_n)\big(1(R^{(n)}_{e,i}(\infty) > r_n^{-1/2}) \\
&\qquad \qquad + 1(R^{(n)}_{e,i}(\infty) \leq r_n^{-1/2}) \big) \big]\\
& \quad \leq \ \E \big[ R^{(n)}_{e,i}(\infty)1(R^{(n)}_{e,i}(\infty) > r_n^{-1/2})\big] + \E \big[  1( L_{k}^{(n)}(\infty)=0)r_n^{-1/2} \big] \\
& \quad \to 0 \quad \text{ as $n \to \infty$}.
\end{align*}

Repeating similar arguments for $R^{(n)}_{s,j}(\infty)$ proves \eqref{eq:non_limiting 3}.

\section{Proof of Lemma~\ref{LEM:LAPLACETIGHTNESS}}
\label{app:laplacetightness}
\setnewcounter
We begin by proving \eqref{eq:iclaimtightness}. Fix $M>0$ and $\delta > 0$. Setting 
\begin{align*}
\theta = -\delta(1, 1, ... , 1)^{\rs{T}} = -\delta \vc{1}, 
\end{align*}
we see that 
\begin{align} \label{eq:integralsplit}
&\phi^{(n)}(\theta) = \phi^{(n)}(\delta) \\
&= \int_{\R_+^d} e^{-\delta\br{\vc{1}, x}} \nu^{(n)}(dx)  \notag \\
&=  \int_{\{||x||_{\infty} \leq M\}} e^{-\delta \br{\vc{1}, x}} \nu^{(n)}(dx) + \int_{\{||x||_{\infty} \leq M\}^{c}} e^{-\delta\br{\vc{1}, x}} \nu^{(n)}(dx).  \notag
\end{align}
Hence, we have 
\begin{align*}
\phi^{(n)}(\theta) \geq e^{-\delta M d} \nu^{(n)}\big{\{}||x||_{\infty} \leq M\big{\}}.
\end{align*}
Assuming that $\{\nu^{(n)}\}$ is tight, we can fix $\epsilon > 0$ and take $M$ large enough and then $\delta$ small enough such that 
\begin{align*}
\phi(\delta) =  \lim \limits_{n \to \infty} \phi^{(n)}(\delta) \geq 1-\epsilon.
\end{align*}
Since each $\phi^{(n)}$ is monotone, $\phi$ is monotone too, which immediately implies 
\begin{align*}
\lim \limits_{\theta \uparrow 0} \phi(\theta) = 1.
\end{align*}
Furthermore, \eqref{eq:integralsplit} also implies that
\begin{align*}
\phi^{(n)}(\delta) \leq \nu^{(n)}\big{\{}||x||_{\infty} \leq M\big{\}} + e^{-\delta M},
\end{align*}
or 
\begin{align*}
\phi(\delta) -  e^{-\delta M} \leq \liminf \limits_{n \to \infty} \nu^{(n)}\big{\{}||x||_{\infty} \leq M\big{\}}.
\end{align*}
If $\lim_{\theta \uparrow 0} \phi(\theta) = 1$, we can fix $\epsilon > 0 $ and choose a $\delta$ small enough so that $\phi(\delta) > 1 - \epsilon/2$. Then take $M$ large enough so that $e^{-\delta M} < \epsilon/2$ to conclude that
\begin{align*}
\liminf \limits_{n \to \infty} \nu^{(n)}\big{\{}||x||_{\infty} \leq M\big{\}} \geq 1 - \epsilon,
\end{align*}
which proves \eqref{eq:iclaimtightness}. \dgreen{Statement \eqref{eq:iiclaimtightness} is an immediate consequence of \cite[Theorem 5.22]{Kall2001}.}

\bibliography{dai20170409}

\def\cprime{$'$} \def\cprime{$'$} \def\cprime{$'$} \def\cprime{$'$}
  \def\cprime{$'$} \def\cprime{$'$} \def\cprime{$'$}
\begin{thebibliography}{41}
\expandafter\ifx\csname natexlab\endcsname\relax\def\natexlab#1{#1}\fi
\expandafter\ifx\csname url\endcsname\relax
  \def\url#1{\texttt{#1}}\fi
\expandafter\ifx\csname urlprefix\endcsname\relax\def\urlprefix{URL }\fi
\providecommand{\eprint}[2][]{\url{#2}}

\bibitem[{Baccelli and Br{\'e}maud(2003)}]{BaccBrem2003}
\textsc{Baccelli, F.} and \textsc{Br{\'e}maud, P.} (2003).
\newblock \textit{Elements of queueing theory: Palm martingale calculus and
  stochastic recurrences}, vol.~26 of \textit{Applications of Mathematics}.
\newblock 2nd ed. Springer, Berlin.

\bibitem[{Berman and Plemmons(1979)}]{BermPlem1979}
\textsc{Berman, A.} and \textsc{Plemmons, R.~J.} (1979).
\newblock \textit{Nonnegative matrices in the mathematical sciences}.
\newblock Academic Press, New York.

\bibitem[{Boucherie and van Dijk(2011)}]{BoucDijk2011}
\textsc{Boucherie, R.~J.} and \textsc{van Dijk, N.~M.} (2011).
\newblock \textit{Queueing networks: a fundamental approach}.
\newblock International Series in Operations Research \& Management Science,
  Springer.
\newblock \urlprefix\url{http://dx.doi.org/10.1007/978-1-4419-6472-4}.

\bibitem[{Bramson and Dai(2001)}]{BramDai2001}
\textsc{Bramson, M.} and \textsc{Dai, J.~G.} (2001).
\newblock Heavy traffic limits for some queueing networks.
\newblock \textit{Ann. Appl. Probab.}, \textbf{11} 49--90.
\newblock \urlprefix\url{http://dx.doi.org/10.1214/aoap/998926987}.

\bibitem[{Braverman et~al.(2017)Braverman, Dai and Miyazawa}]{BravDaiMiya2017}
\textsc{Braverman, A.}, \textsc{Dai, J.} and \textsc{Miyazawa, M.} (2017).
\newblock Heavy traffic approximation for the stationary distribution of a
  generalized {Jackson} network: the {BAR} approach.
\newblock \textit{Stochastic Systems}, \textbf{7} 143--196.
\newblock
  \urlprefix\url{http://www.i-journals.org/ssy/viewarticle.php?id=199&layout=abstract}.

\bibitem[{Braverman and Dai(2017)}]{BravDai2017}
\textsc{Braverman, A.} and \textsc{Dai, J.~G.} (2017).
\newblock Stein's method for steady-state diffusion approximations of
  ${M}/\mathit{Ph}/n+{M}$ systems.
\newblock \textit{Ann. Appl. Probab.}, \textbf{27} 550--581.
\newblock \urlprefix\url{http://dx.doi.org/10.1214/16-AAP1211}.

\bibitem[{{Braverman} et~al.(2016){Braverman}, {Dai} and
  {Feng}}]{BravDaiFeng2016}
\textsc{{Braverman}, A.}, \textsc{{Dai}, J.~G.} and \textsc{{Feng}, J.} (2016).
\newblock {Stein's method for steady-state diffusion approximations: an
  introduction through the Erlang-A and Erlang-C models}.
\newblock \textit{Stochastic Systems}, \textbf{6} 301--366.
\newblock
  \urlprefix\url{http://www.i-journals.org/ssy/viewarticle.php?id=212&layout=abstract}.

\bibitem[{Budhiraja and Lee(2009)}]{BudhLee2009}
\textsc{Budhiraja, A.} and \textsc{Lee, C.} (2009).
\newblock Stationary distribution convergence for generalized {J}ackson
  networks in heavy traffic.
\newblock \textit{Math. Oper. Res.}, \textbf{34} 45--56.
\newblock \urlprefix\url{http://dx.doi.org/10.1287/moor.1080.0353}.

\bibitem[{Chen and Mandelbaum(1991)}]{ChenMand1991}
\textsc{Chen, H.} and \textsc{Mandelbaum, A.} (1991).
\newblock Discrete flow networks: bottleneck analysis and fluid approximations.
\newblock \textit{Math. Oper. Res.}, \textbf{16} 408--446.
\newblock \urlprefix\url{http://dx.doi.org/10.1287/moor.16.2.408}.

\bibitem[{Chen et~al.(2011)Chen, Goldstein and Shao}]{ChenGoldShao2011}
\textsc{Chen, L. H.~Y.}, \textsc{Goldstein, L.} and \textsc{Shao, Q.-M.}
  (2011).
\newblock \textit{Normal approximation by {S}tein's method}.
\newblock Probability and its Applications (New York), Springer, Heidelberg.
\newblock \urlprefix\url{http://dx.doi.org/10.1007/978-3-642-15007-4}.

\bibitem[{Dai(1995)}]{Dai1995}
\textsc{Dai, J.~G.} (1995).
\newblock On positive {H}arris recurrence of multiclass queueing networks: a
  unified approach via fluid limit models.
\newblock \textit{Annals of Applied Probability}, \textbf{5} 49--77.
\newblock
  \urlprefix\url{http://links.jstor.org/sici?sici=1050-5164(199502)5:1<49:OPHROM>2.0.CO;2-4&origin=MSN}.

\bibitem[{Dai et~al.(2014{\natexlab{a}})Dai, Dieker and Gao}]{DaiDiekGao2014}
\textsc{Dai, J.~G.}, \textsc{Dieker, A.} and \textsc{Gao, X.}
  (2014{\natexlab{a}}).
\newblock Validity of heavy-traffic steady-state approximations in many-server
  queues with abandonment.
\newblock \textit{Queueing Systems}, \textbf{78} 1--29.
\newblock \urlprefix\url{http://dx.doi.org/10.1007/s11134-014-9394-x}.

\bibitem[{Dai and Kurtz(1994)}]{DaiKurt1994}
\textsc{Dai, J.~G.} and \textsc{Kurtz, T.~G.} (1994).
\newblock Characterization of the stationary distribution for a semimartingale
  reflecting {Brownian} motion in a convex polyhedron.
\newblock Preprint.

\bibitem[{Dai et~al.(2014{\natexlab{b}})Dai, Miyazawa and Wu}]{DaiMiyaWu2014}
\textsc{Dai, J.~G.}, \textsc{Miyazawa, M.} and \textsc{Wu, J.}
  (2014{\natexlab{b}}).
\newblock A multi-dimensional {SRBM}: geometric views of its product form
  stationary distribution.
\newblock \textit{Queueing Systems}, \textbf{78} 313--335.
\newblock (arXiv version can be downloaded at the website),
  \urlprefix\url{http://arxiv.org/abs/1312.1758}.

\bibitem[{Down and Meyn(1994)}]{DownMeyn1994}
\textsc{Down, D.} and \textsc{Meyn, S.} (1994).
\newblock Piecewise linear test functions for stability of queueing networks.
\newblock \textit{Proceedings of the 33rd Conference on Decision and Control}
  2069--2074.
\newblock \urlprefix\url{http://dx.doi.org/10.1109/CDC.1994.411432}.

\bibitem[{Gamarnik and Stolyar(2012)}]{GamaStol2012}
\textsc{Gamarnik, D.} and \textsc{Stolyar, A.~L.} (2012).
\newblock Multiclass multiserver queueing system in the {Halfin-Whitt} heavy
  traffic regime: asymptotics of the stationary distribution.
\newblock \textit{Queueing Systems}, \textbf{71} 25--51.
\newblock \urlprefix\url{http://dl.acm.org/citation.cfm?id=2339029}.

\bibitem[{Gamarnik and Zeevi(2006)}]{GamaZeev2006}
\textsc{Gamarnik, D.} and \textsc{Zeevi, A.} (2006).
\newblock Validity of heavy traffic steady-state approximation in generalized
  {J}ackson networks.
\newblock \textit{Ann. Appl. Probab.}, \textbf{16} 56--90.
\newblock \urlprefix\url{http://dx.doi.org/10.1214/105051605000000638}.

\bibitem[{Gurvich(2014{\natexlab{a}})}]{Gurv2014}
\textsc{Gurvich, I.} (2014{\natexlab{a}}).
\newblock Diffusion models and steady-state approximations for exponentially
  ergodic {M}arkovian queues.
\newblock \textit{The Annals of Applied Probability}, \textbf{24} 2527--2559.
\newblock \urlprefix\url{http://dx.doi.org/10.1214/13-AAP984}.

\bibitem[{Gurvich(2014{\natexlab{b}})}]{Gurv2014a}
\textsc{Gurvich, I.} (2014{\natexlab{b}}).
\newblock Validity of heavy-traffic steady-state approximations in multiclass
  queueing networks: the case of queue-ratio disciplines.
\newblock \textit{Mathematics of Operations Research}, \textbf{39} 121--162.
\newblock \urlprefix\url{http://dx.doi.org/10.1287/moor.2013.0593}.

\bibitem[{Gurvich et~al.(2014)Gurvich, Huang and Mandelbaum}]{GurvHuanMand2014}
\textsc{Gurvich, I.}, \textsc{Huang, J.} and \textsc{Mandelbaum, A.} (2014).
\newblock Excursion-based universal approximations for the {Erlang-A} queue in
  steady-state.
\newblock \textit{Mathematics of Operations Research}, \textbf{39} 325--373.
\newblock \urlprefix\url{http://dx.doi.org/10.1287/moor.2013.0606}.

\bibitem[{Gut(2005)}]{Gut2005}
\textsc{Gut, A.} (2005).
\newblock \textit{Probability: {A} {G}raduate {C}ourse}.
\newblock Springer.

\bibitem[{Harrison and Reiman(1981)}]{HarrReim1981}
\textsc{Harrison, J.~M.} and \textsc{Reiman, M.~I.} (1981).
\newblock Reflected {B}rownian motion on an orthant.
\newblock \textit{Ann. Probab.}, \textbf{9} 302--308.
\newblock
  \urlprefix\url{http://links.jstor.org/sici?sici=0091-1798(198104)9:2<302:RBMOAO>2.0.CO;2-P&origin=MSN}.

\bibitem[{Harrison and Williams(1987)}]{HarrWill1987}
\textsc{Harrison, J.~M.} and \textsc{Williams, R.~J.} (1987).
\newblock Brownian models of open queueing networks with homogeneous customer
  populations.
\newblock \textit{Stochastics}, \textbf{22} 77--115.
\newblock \urlprefix\url{http://dx.doi.org/10.1080/17442508708833469}.

\bibitem[{Huang and Gurvich(2016)}]{GurvHuan2016}
\textsc{Huang, J.} and \textsc{Gurvich, I.} (2016).
\newblock Beyond heavy-traffic regimes: universal bounds and controls for the
  single-server queue.
\newblock Submitted for publication,
  \urlprefix\url{http://papers.ssrn.com/sol3/papers.cfm?abstract_id=2784752}.

\bibitem[{Johnson(1983)}]{John1983}
\textsc{Johnson, D.~P.} (1983).
\newblock \textit{Diffusion approximations for optimal filtering of jump
  processes and for queueing networks}.
\newblock Ph.D. thesis, University of Wisconsin.
\newblock
  \urlprefix\url{http://gateway.proquest.com/openurl?url_ver=Z39.88-2004&rft_val_fmt=info:ofi/fmt:kev:mtx:dissertation&res_dat=xri:pqdiss&rft_dat=xri:pqdiss:8316215}.

\bibitem[{Kallenberg(2001)}]{Kall2001}
\textsc{Kallenberg, O.} (2001).
\newblock \textit{Foundations of Modern Probability}.
\newblock 2nd ed. Springer Series in Statistics, Probability and its
  applications, Springer, {New York}.

\bibitem[{Katsuda(2010)}]{Kats2010}
\textsc{Katsuda, T.} (2010).
\newblock State-space collapse in stationarity and its application to a
  multiclass single-server queue in heavy traffic.
\newblock \textit{Queueing Syst.}, \textbf{65} 237--273.
\newblock \urlprefix\url{http://dx.doi.org/10.1007/s11134-010-9178-x}.

\bibitem[{Kingman(1962)}]{King1962}
\textsc{Kingman, J. F.~C.} (1962).
\newblock On queues in heavy traffic.
\newblock \textit{J. Roy. Statist. Soc. Ser. B}, \textbf{24} 383--392.
\newblock
  \urlprefix\url{http://links.jstor.org/sici?sici=0035-9246(1962)24:2<383:OQIHT>2.0.CO;2-A&origin=MSN}.

\bibitem[{Krantz and Parks(2013)}]{KranPark2013}
\textsc{Krantz, S.} and \textsc{Parks, H.} (2013).
\newblock \textit{The implicit function theorem: history, theory, and
  applications}.
\newblock Birkh{\"a}user.
\newblock
  \urlprefix\url{http://link.springer.com/book/10.1007%2F978-1-4614-5981-1}.

\bibitem[{Miyazawa(2015)}]{Miya2015}
\textsc{Miyazawa, M.} (2015).
\newblock Diffusion approximation for stationary analysis of queues and their
  networks: a review.
\newblock \textit{J. Oper. Res. Soc. Japan}, \textbf{58} 104--148.
\newblock \urlprefix\url{http://dx.doi.org/10.15807/jorsj.58.104}.

\bibitem[{Miyazawa(2017)}]{Miya2017}
\textsc{Miyazawa, M.} (2017).
\newblock A unified approach for large queue asymptotics in a heterogeneous
  multiserver queue.
\newblock \textit{Adv. in Appl. Probab.}, \textbf{49} 182--220.
\newblock \urlprefix\url{http://dx.doi.org/10.1017/apr.2016.84}.

\bibitem[{Reiman(1984)}]{Reim1984}
\textsc{Reiman, M.~I.} (1984).
\newblock Open queueing networks in heavy traffic.
\newblock \textit{Mathematics of Operations Research}, \textbf{9} 441--458.
\newblock \urlprefix\url{http://dx.doi.org/10.1287/moor.9.3.441}.

\bibitem[{Resnick(1992)}]{Resn1992}
\textsc{Resnick, S.~I.} (1992).
\newblock \textit{Adventures in stochastic processes}.
\newblock {Birkh\"auser}, Boston.

\bibitem[{Ross(2011)}]{Ross2011}
\textsc{Ross, N.} (2011).
\newblock Fundamentals of {S}tein's method.
\newblock \textit{Probab. Surv.}, \textbf{8} 210--293.
\newblock \urlprefix\url{http://dx.doi.org/10.1214/11-PS182}.

\bibitem[{Serfozo(2009)}]{Serf2009}
\textsc{Serfozo, R.} (2009).
\newblock \textit{Basics of applied stochastic processes}.
\newblock Probability and its Applications (New York), Springer-Verlag, Berlin.

\bibitem[{Stadtm{\"u}ller and Trautner(1981)}]{StadTrau1981}
\textsc{Stadtm{\"u}ller, U.} and \textsc{Trautner, R.} (1981).
\newblock Tauberian theorems for {L}aplace transforms in dimension {D} $>$ 1.
\newblock \textit{Journal f{\"u}r die reine und angewandte Mathematik},
  \textbf{323} 127--138.
\newblock \urlprefix\url{http://eudml.org/doc/152337}.

\bibitem[{Stein(1986)}]{Stei1986}
\textsc{Stein, C.} (1986).
\newblock Approximate computation of expectations.
\newblock \textit{Lecture Notes-Monograph Series}, \textbf{7}.
\newblock \urlprefix\url{http://www.jstor.org/stable/4355512}.

\bibitem[{Tezcan(2008)}]{Tezc2008}
\textsc{Tezcan, T.} (2008).
\newblock Optimal control of distributed parallel server systems under the
  {Halfin and Whitt} regime.
\newblock \textit{Mathematics of Operations Research}, \textbf{33} 51--90.
\newblock
  \urlprefix\url{http://search.proquest.com/docview/212618995?accountid=10267}.

\bibitem[{Williams(1991)}]{Will1991}
\textsc{Williams, D.} (1991).
\newblock \textit{Probability with Martingales}.
\newblock Cambridge mathematical textbooks, Cambridge University Press.

\bibitem[{Ye and Yao(2016)}]{YeYao2016}
\textsc{Ye, H.-Q.} and \textsc{Yao, D.~D.} (2016).
\newblock Diffusion limit of fair resource control---stationarity and
  interchange of limits.
\newblock \textit{Mathematics of Operations Research}, \textbf{41} 1161--1207.
\newblock \eprint{http://dx.doi.org/10.1287/moor.2015.0773},
  \urlprefix\url{http://dx.doi.org/10.1287/moor.2015.0773}.

\bibitem[{Zhang and Zwart(2008)}]{ZhanZwar2008}
\textsc{Zhang, J.} and \textsc{Zwart, B.} (2008).
\newblock Steady state approximations of limited processor sharing queues in
  heavy traffic.
\newblock \textit{Queueing Systems: Theory and Applications}, \textbf{60}
  227--246.
\newblock \urlprefix\url{http://dx.doi.org/10.1007/s11134-008-9095-4}.

\end{thebibliography}

\end{document}